\def\Xint#1{\mathchoice
	{\XXint\displaystyle\textstyle{#1}}%
	{\XXint\textstyle\scriptstyle{#1}}%
	{\XXint\scriptstyle\scriptscriptstyle{#1}}%
	{\XXint\scriptscriptstyle\scriptscriptstyle{#1}}%
	\!\int}
\def\XXint#1#2#3{{\setbox0=\hbox{$#1{#2#3}{\int}$ }
		\vcenter{\hbox{$#2#3$ }}\kern-.6\wd0}}
\numberwithin{equation}{section}
\theoremstyle{plain}
\newtheorem{theorem}[equation]{Theorem}
\newtheorem{proposition}[equation]{Proposition}
\newtheorem{lemma}[equation]{Lemma}
\newtheorem{corollary}[equation]{Corollary}
\newtheorem{example}[equation]{Example}
\theoremstyle{remark}
\newtheorem{remark}[equation]{Remark}
\theoremstyle{definition}
\newtheorem*{question*}{Question}
\def\vint_#1{\mathchoice%
        {\mathop{\kern 0.2em\vrule width 0.6em height 0.69678ex depth -0.58065ex
                \kern -0.8em \intop}\nolimits_{\kern -0.4em#1}}%
        {\mathop{\kern 0.1em\vrule width 0.5em height 0.69678ex depth -0.60387ex
                \kern -0.6em \intop}\nolimits_{#1}}%
        {\mathop{\kern 0.1em\vrule width 0.5em height 0.69678ex
            depth -0.60387ex
                \kern -0.6em \intop}\nolimits_{#1}}%
        {\mathop{\kern 0.1em\vrule width 0.5em height 0.69678ex depth -0.60387ex
                \kern -0.6em \intop}\nolimits_{#1}}}
\def\vintslides_#1{\mathchoice%
        {\mathop{\kern 0.1em\vrule width 0.5em height 0.697ex depth -0.581ex
                \kern -0.6em \intop}\nolimits_{\kern -0.4em#1}}%
        {\mathop{\kern 0.1em\vrule width 0.3em height 0.697ex depth -0.604ex
                \kern -0.4em \intop}\nolimits_{#1}}%
        {\mathop{\kern 0.1em\vrule width 0.3em height 0.697ex depth -0.604ex
                \kern -0.4em \intop}\nolimits_{#1}}%
        {\mathop{\kern 0.1em\vrule width 0.3em height 0.697ex depth -0.604ex
                \kern -0.4em \intop}\nolimits_{#1}}}
\newcounter{prob}
\newcommand{\Z}{\ensuremath{\mathbb{Z}}}
\newcommand{\N}{\ensuremath{\mathbb{N}}}
\newcommand{\R}{\ensuremath{\mathbb{R}}}
\newcommand{\diam}{\ensuremath{\ \mathrm{Diam\ }}}
\newcommand{\defeq}{\mathrel{\mathop:}=}
\newcommand{\A}{\ensuremath{\mathcal{A}}}
\newcommand{\Rp}{\ensuremath{\mathcal{R}_p}}
\newcommand{\Ri}{\ensuremath{\mathcal{R}_1}}
\newcommand{\cH}{\ensuremath{\mathcal{H}}}
\newcommand{\loc}{\ensuremath{\mathrm{loc}}}
\newcommand{\Wdp}{\ensuremath{\dot{W}^{1,p}}}
\def\Xint#1{\mathchoice
{\XXint\displaystyle\textstyle{#1}}%
{\XXint\textstyle\scriptstyle{#1}}%
{\XXint\scriptstyle\scriptscriptstyle{#1}}%
{\XXint\scriptscriptstyle\scriptscriptstyle{#1}}%
\!\int}
\def\XXint#1#2#3{{\setbox0=\hbox{$#1{#2#3}{\int}$ }
\vcenter{\hbox{$#2#3$ }}\kern-.58\wd0}}
\def\avint{\Xint-}
\title{On Limits at Infinity of Weighted Sobolev Functions}
\author{Sylvester Eriksson-Bique}
\address{Research Unit of Mathematical Sciences,
P.O.Box 8000,
FI-90014 Oulu, Finland}
\email{\tt sylvester.eriksson-bique@oulu.fi}
\author{Pekka Koskela}
\address{Department of Mathematics and Statistics \\
P.O. Box 35 \\
FI-40014 University of Jyväskylä}
\email{pekka.j.koskela@jyu.fi}
\author{Khanh Nguyen}
\address{Department of Mathematics and Statistics \\
P.O. Box 35 \\
FI-40014 University of Jyväskylä}
\email{khanh.n.nguyen@jyu.fi}
\subjclass[2020]{46E36 (46E30,26B35,42B35)}
\begin{document}

\maketitle
\begin{center}
{ \textit{\small Dedicated to Professor Olli Martio on the occasion of his 80th birthday celebration}}
\end{center}
\begin{abstract}
We study necessary and sufficient conditions for a Muckenhoupt weight $w \in L^1_{\loc}(\mathbb R^d)$ that yield almost sure existence of radial, and vertical, limits at infinity for Sobolev functions $u \in W^{1,p}_{\loc}(\mathbb R^d,w)$ with a $p$-integrable gradient $|\nabla u|\in L^p(\mathbb R^d,w)$. The question is shown to subtly depend on the sense in which the limit is taken.

First, we fully characterize the existence of radial limits. Second, we give essentially sharp sufficient conditions for the existence of vertical limits.  
In the specific setting of product and radial weights, we give if and only if statements. These generalize and give new proofs for results of Fefferman and Uspenski\u{\i}. 
\end{abstract}

\section{Introduction}

\subsection{Overview}

The starting point for this paper is the following result on radial limits by Uspenski\u{\i} \cite{u1961}. 
If $1 \leq p < d$ and   $u:\R^d \to \R$ is a continuously differentiable function with a $p$-integrable gradient $|\nabla u|\in L^p(\mathbb R^d),$ then there exists a constant $c\in \mathbb R$ so that 
\begin{equation} \label{yradiaali}
\lim_{t\to \infty}u(t\xi)=c
\end{equation}
 for almost every $\xi$ in the unit sphere $\mathbb S^{d-1}.$ The requirement that $1\le p<d$ is necessary as seen by considering the function
$u(x)=\log\log(2+|x|^2).$  This observation is credited to Timan \cite{t1975}.  

Let us say that a function $u$ has \emph{a unique almost sure  radial limit} if there is a finite value $c$ so that \eqref{yradiaali} holds for
almost every  $\xi \in \mathbb S^{d-1}.$ In more modern language, the statement above concerns precise representatives of functions in the  Sobolev space $\dot W^{1,p}(\mathbb R^d)$. This space consists of all locally $p$-integrable functions $u$ whose distributional gradient $\nabla u$ satisfies $\nabla u \in L^{p}(\mathbb R^d)$ (in the sense that $\partial_i u \in L^p(\mathbb R^d)$ for each $i=1,\dots, d$). Uspenski\u{\i}'s result then can be rephrased as saying that $1\le p<d$ if and only if every $u\in \dot W^{1,p}(\R^d)$ has a representative 
which 
has a unique almost sure  radial limit.

Besides radial limits, also vertical limits have been considered. A function $u:\R^d \to \R$ is said to have \emph{a unique almost sure  vertical limit} if for almost every $\overline{x} \in \R^{d-1}$ we have\footnote{Here, and in what follows, we identify $\R^d=\R^{d-1}\times \R$.}
\begin{equation} \label{vertikaali}
\lim_{t\to \infty}u(\overline{x},t)=c,
\end{equation}
where $c$ is independent of $\overline x$. For specific functions $u$ (such as $u(x,y)=\frac{x^4-y^2}{x^4+y^2+1}$), the constants $c$ in \eqref{yradiaali} and \eqref{vertikaali} may be different. However, when $|\nabla u|\in L^p(\mathbb R^d)$ and $1\le p<d$, they coincide. In \eqref{vertikaali}, we could also consider the limit $|t|\to \infty$, and assume that the limit almost surely equals $c$. Our discussion applies to this setting with few modifications; see Remark \ref{rmk:minfty}.

Indeed, Kudryavtsev had asked, if a Sobolev function would have unique almost sure  vertical limits. Fefferman \cite{f1974} and Portnov \cite{p1974} independently resolved this question and showed that under the same assumptions as for Uspenski\u{\i}, for $1\leq p < d$, as before, every $u\in \dot W^{1,p}(\R^d)$ has a unique almost sure vertical limit. 
Further, the value of the almost sure vertical limit in \eqref{vertikaali} is the same as in the case of radial limits \eqref{yradiaali}. 

 The purpose of this paper is to study extensions of Uspenski\u{\i}'s, Fefferman's and Portnov's results to weighted Sobolev spaces; for unweighted generalizations see \cite{UV,Kstability}.
 Weighted Sobolev spaces, especially with Muckenhoupt weights, have played a crucial role in PDEs and the study of variational problems, starting from  \cite{FKS}.  They are still actively employed in these topics; see
 \cite{CavalNeumann,Tuoc,SK}. Weighted function spaces have been further studied by many authors in regard to their intrinsic properties, such as regularity and the existence of traces; see \cite{rychkov, shlykweighted, MR4044738,Tyulenev13,Tyulenev14, Tyulanev14B}. Further, especially Muckenhoupt weighted Sobolev spaces arise in non-linear potential theory and in analysis on metric spaces; see e.g. \cite{MR2180887, bjornbjorn, HKST07}. 
Indeed, the importance of Muckenhoupt weights can be gleaned from the extensive literature on the topic.

The choice of Muckenhoupt weights is driven in part by their regularity properties, and the fact that they have naturally appeared in various settings; see references above. Further, without some assumption on the weight, we would end up with issues regarding the precise representatives of Sobolev functions, and lack the required absolute continuity on generic lines; see for instance lemmas \ref{lem:sobolevinclusion} and \ref{lem:acclines}, which crucially use the Muckenhoupt assumption. This regularity theory is developed significantly in \cite{HKM06}. The class of Muckenhoupt weights $w\in\mathcal A_p$ is also natural to consider, since they guarantee a $p$-Poincar\'e inequality and doubling; see  equations  \eqref{doubling-1811} and \eqref{PI-1811} below. 

Our paper studies limits of weighted Sobolev functions. Our first results give characterizations for Muckenhoupt-weighted Sobolev functions to possess a unique almost sure  radial limit. Then, motivated by results of Fefferman and Portnov, we pursue the existence of \emph{vertical limits}. First, we note that the existence of vertical limits is \emph{more restrictive} than having radial limits. This is a phenomenon that already occurs with radial weights $w(x)=|x|^\alpha$ as will be shown in Remark \ref{ex:radialweight}. 

To obtain almost sure vertical limits, we will need to place a non-degeneracy assumption on the weight.  However, sufficient conditions prove more difficult and involve assumptions on regularity (integrability or a special structure). While in some settings these sufficient conditions also become necessary, in general there is a gap between them. Further, we provide examples to illustrate the partial sharpness of our conditions.
 
We take a small excursion to define notation. 
Throughout, we will only consider weights $w\in \A_p$, where $\A_p:=\mathcal A_p(\mathbb R^d)$ is the class of Muckenhoupt weights on $\mathbb R^d$. We will study notions with respect to the weighted Lebesgue measure $\mu$ with $d\mu=wdx$. Also, we will denote the weighted measure of a set $A \subset \R^d$ as $w(A)$. If $w\in \A_p$, then $w^{-\frac{1}{p-1}}\in L^{1}_{\loc}(\R^d)$ when $p>1$ (or $w^{-1}\in L^\infty_{\loc}(\R^d)$ when $p=1$) and it follows from H\"older's inequality that a function $u \in L^{p}_{\loc}(\R^d,w)$ also satisfies $u\in L^1_{\loc}(\R^d)$.

  We define $\dot W^{1,p}(\mathbb R^d,w)$ and $W^{1,p}_{\loc}(\R^d,w)$ to consist of all Lebesgue representatives of functions $u\in L^p_{\loc}(\R^d,w)$ so that $|\nabla u| \in L^p(\R^d,w)$ and $|\nabla u| \in L^p_{\loc}(\R^d,w)$, respectively.  The \emph{Lebesgue representative} is defined\footnote{It will be crucial for us, that Lebesgue representatives are defined with respect to the Lebesgue measure - and not with respect to the weighted measure $wdx$.} as $\tilde{u}(x)=\limsup_{r\to 0} \avint_{B(x,r)} u(y)\, dy$. Since this assumption is crucial for us, we highlight it here:
\begin{quote}
\textit{
For $w\in \A_p$ each $u\in W^{1,p}_{\loc}(\mathbb R^d,w)$ will be taken as its Lebesgue representative: $u(x)=\limsup_{r\to 0} \avint_{B(x,r)} u(y)\, dy$.}
\end{quote}
For more details, see Subsection \ref{subsec:sobolev}. See also \cite[Section 4]{HKi98}.
  
Before we start the detailed discussion on our results, we present an example to illustrate the main results.

\begin{remark} \label{ex:radialweight} A useful family of examples in $\R^d$ to consider is the class of power weights $w(x)=|x|^\alpha$ for $\alpha \in \R$. 
\begin{enumerate}
\item If $\alpha < -d$, then $w$ is not locally integrable and $d\mu = w dx$ fails to give a locally finite Radon measure. If $\alpha \in (-d,d(p-1))$, then $w \in \A_p$. 
\item If $\alpha \in (-d,p-d]$, then the function $u(x)=\log\log(2+|x|^2)$ from above satisfies $u \in \dot W^{1,p}(\R^d,w)$ but does not have radial and vertical limits. 
\item If $\alpha \in (p-d,0)$, then Theorem \ref{remark-c} below gives the existence of a unique almost sure radial limit. However, vertical limits may fail to exist. Towards this, let $e_d=(0,\dots, 0, 1)$ be the unit vector in the $d$'th coordinate direction. There exists a function $u(x) \in \dot W^{1,p}(\R^d, w)$ for which the limit $\lim_{t\to \infty} u( x+te_d)$  exists for no $x$ with $|x|\leq 1/2$. Indeed, the function $u(x)=\sum_{i=1}^\infty \max\left\{1-|x-2^i e_d|,0\right\}$ is such a function. The reason behind this is that when $\alpha < 0$, the masses of unit sized cubes degenerate as the cubes move towards infinity.
\item Finally, if $\alpha \in [0,d(p-1))$, then both vertical and radial limits exist by Theorems \ref{remark-c} and \ref{thm:pekka-mizuta} below.
\end{enumerate}
\end{remark}

Next, we will present the results of this paper in more detail, starting with the radial setting and then proceeding to the vertical setting.

  \subsection{Radial limits} Our first theorem shows that the weak boundedness along a single ray, for all functions, will imply that a unique almost sure radial limit exists. In fact, the statement is even slightly stronger.

\begin{theorem}\label{thm:single limit} Let $w\in \A_p(\R^d)$ where $1\leq p<\infty$ and $d\geq 2$. Then the following two conditions are equivalent:

\begin{enumerate}
\item For every $u\in  \dot W^{1,p}(\R^d,w)$, there exists a $\xi \in \mathbb S^{d-1}$ so that $\liminf_{t\to \infty} |u(t\xi)| <\infty$.
\item Every $u\in  \dot W^{1,p}(\R^d,w)$ has a unique almost sure  radial limit.
\end{enumerate}
\end{theorem}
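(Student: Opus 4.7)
The implication $(2)\Rightarrow(1)$ is immediate: if $u$ has unique almost-sure radial limit $c\in\R$, then $\liminf_{t\to\infty}|u(t\xi)|=|c|<\infty$ for $\sigma$-almost every $\xi\in\mathbb{S}^{d-1}$, so the condition in (1) holds trivially for some (indeed, almost every) $\xi$.

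For $(1)\Rightarrow(2)$, the first step is to reduce existence of finite radial limits along a ray to a dual integrability condition on $w$ along that ray. The $\A_p$ hypothesis, together with the ACL property of weighted Sobolev functions (cf.\ lemmas \ref{lem:sobolevinclusion} and \ref{lem:acclines}), ensures that every $u\in\dot W^{1,p}(\R^d,w)$ is absolutely continuous on $\sigma$-almost every radial line. Applying Hölder in polar coordinates along such a ray, for $1<p<\infty$,
\[
\int_1^\infty |\partial_r u(r\xi)|\,dr \leq \Bigl(\int_1^\infty |\partial_r u(r\xi)|^p w(r\xi)\,r^{d-1}\,dr\Bigr)^{1/p}\cdot J(\xi)^{(p-1)/p},
\]
where $J(\xi):=\int_1^\infty r^{-(d-1)/(p-1)}\,w(r\xi)^{-1/(p-1)}\,dr$ (with the obvious modification via $w^{-1}\in L^\infty_{\loc}$ when $p=1$). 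By Fubini applied to $\int|\nabla u|^p w\,dx$, the first factor is finite for $\sigma$-a.e.\ $\xi$, so $J(\xi)<\infty$ forces existence of a finite radial limit at $\xi$.

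The heart of the proof is then to show that under (1) one has $J(\xi)<\infty$ for $\sigma$-a.e. $\xi$. I would argue this by contrapositive. If $J=+\infty$ on a positive-measure set $E\subset\mathbb{S}^{d-1}$, then by duality one can, for each $\xi\in E$, choose a divergent radial profile $f_\xi\co[1,\infty)\to\R$ with $|f_\xi(r)|\to\infty$ yet with arbitrarily small $\int_1^\infty|f_\xi'(r)|^p w(r\xi)\,r^{d-1}\,dr$. Stitching these profiles to the rest of $\mathbb{S}^{d-1}$ via a smooth angular partition of unity, and controlling the tangential-gradient contributions by the $p$-Poincaré inequality for $\A_p$ weights (equation \eqref{PI-1811}) on thin conical annuli, produces a function $v\in\dot W^{1,p}(\R^d,w)$ with $|v(t\xi)|\to\infty$ for every $\xi\in\mathbb{S}^{d-1}$, contradicting (1).

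Once finite radial limits exist $\sigma$-a.e., their almost-sure constancy follows from a Poincaré-type argument on the doubling dyadic annuli $A_k := A(2^{k-1}, 2^{k+1})$: the tail $\int_{|x|>R}|\nabla u|^p w\,dx\to 0$ forces the weighted averages $c_k:=\avint_{A_k} u\,d\mu$ to converge, and the $p$-Poincaré inequality on each $A_k$ forces $u-c_k\to 0$ in an averaged sense, pinning the radial limit $\sigma$-almost surely to $\lim_k c_k$. The principal obstacle is the construction in the previous paragraph: producing, from failure of $J$-integrability on a positive-measure angular set, a global Sobolev function whose radial traces diverge along \emph{every} ray, requires leveraging the failure of dual integrability on $E$ in a way compatible with global $L^p(w)$-gradient control, a balance made possible precisely by the Muckenhoupt structure.
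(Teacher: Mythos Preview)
Your reduction of $(2)\Rightarrow(1)$ and your H\"older estimate along rays are fine, but the heart of your argument for $(1)\Rightarrow(2)$ has a genuine gap in the construction step. To contradict (1) you must produce a single $v\in\dot W^{1,p}(\R^d,w)$ with $\liminf_{t\to\infty}|v(t\xi)|=\infty$ for \emph{every} $\xi\in\mathbb S^{d-1}$. Your ray-by-ray divergent profiles $f_\xi$ exist only for $\xi\in E$; stitching them with an angular partition of unity yields a function that vanishes (or is bounded) on rays outside a neighborhood of $E$, so condition (1) is \emph{not} violated. Nothing in your sketch explains how divergence propagates from the positive-measure set $E$ to all directions, and the Poincar\'e inequality goes the wrong way for this purpose (it bounds oscillation by gradient, not conversely). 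Your uniqueness step has a parallel gap: the assertion that $c_k=\avint_{A_k}u\,d\mu$ converges needs $\sum_k 2^k w(A_k)^{-1/p}\bigl(\int_{A_k}|\nabla u|^p\,d\mu\bigr)^{1/p}<\infty$, which after H\"older is exactly controlled by $\mathcal R_p(w)^{(p-1)/p}$ --- a global quantity you never establish is finite.

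The paper avoids both problems by routing through the annular quantity $\mathcal R_p(w)$ rather than the directional $J(\xi)$. If (1) holds, the contrapositive of Lemma~\ref{lem3.5} forces $\mathcal R_p(w)<\infty$: that lemma builds, from $\mathcal R_p(w)=\infty$, a single function $u(x)=\inf_{\gamma}\int_\gamma g_p\,ds$ (a Riemannian-distance-type construction with a carefully chosen radial density $g_p$) satisfying $\lim_{|x|\to\infty}u(x)=\infty$ uniformly in direction. This global construction is what your ray-by-ray gluing cannot deliver. Once $\mathcal R_p(w)<\infty$, Lemma~\ref{lem3.1} supplies both existence and uniqueness of the radial limit; the uniqueness argument there does use the annular estimates that $\mathcal R_p(w)<\infty$ provides. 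Note incidentally that your condition ``$J(\xi)<\infty$ a.e.'' is genuinely weaker than $\mathcal R_p(w)<\infty$ (the latter is essentially $\int_{\mathbb S^{d-1}}J(\xi)\,d\sigma<\infty$), so even if you had established it, it would not suffice for the uniqueness step as you wrote it.
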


We highlight here the \emph{uniqueness} of the radial limit. In principle, one could consider the condition that the radial limit $\lim_{t\to\infty} u(t\xi)=c_\xi$ exists for a.e. $\xi \in \mathbb S^{d-1}$. \emph{A priori}, the limit $c_\xi$ could depend on the direction $\xi.$ However, it follows as a corollary to the theorem that, if the limits exist in this sense for \emph{every} function $u\in  \dot W^{1,p}(\R^d,w)$, then in fact the almost sure radial limit is independent of direction. The almost sure radial limit can further be computed in many average ways.

\begin{proposition}\label{thm:identify-c}
 Under either assumption of Theorem \ref{thm:single limit}, the unique almost sure radial limit $c$ satisfies each of the following three conditions:
 
 \begin{center}
\begin{tabular}{  p{5.2cm} | p{5.2cm} | p{5.5cm}  }
 $\displaystyle\lim_{r\to\infty}\avint_{\mathbb S^{d-1}}|u(r\xi)-c|d\mathcal H^{d-1}(\xi)=0$ & \hspace{.1cm}$\displaystyle\lim_{t\to\infty}\avint_{B(0,t)\setminus B(0,t/2)}|u-c|dx=0$ & \hspace{.1cm}$\displaystyle \lim_{|x|\to\infty} \avint_{B(x,|x|/2)}|u(y)-c|dy=0$ \\     
\end{tabular}
\end{center}
where $0$ is the origin of $\mathbb R^d$ and $B(x,r)$ is the ball with radius $r$ and center at $x$.
Further, the claim that for every $u\in \dot W^{1,p}(\R^d,w)$ there exists a constant $\tilde d$, so that any of these limits exists with $\tilde d$ replacing $c$, is equivalent with the conclusions of Theorem \ref{thm:single limit}.
\end{proposition}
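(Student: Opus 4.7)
We split the argument into two directions. Throughout, let $c = c(u)$ denote the a.s.\ radial limit whose existence is guaranteed by Theorem \ref{thm:single limit}.

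\textbf{Forward direction.} Assume that every $u \in \dot W^{1,p}(\R^d,w)$ has a unique a.s.\ radial limit $c$. The plan is to establish the first (spherical) limit via the radial fundamental theorem of calculus. By Lemma \ref{lem:acclines}, for a.e.\ $\xi \in \mathbb S^{d-1}$ the map $t \mapsto u(t\xi)$ is absolutely continuous, and by hypothesis $u(t\xi) \to c$. Thus for a.e.\ $\xi$,
\[
|u(r\xi) - c| \;\le\; \int_r^\infty |\nabla u(s\xi)|\, ds.
\]
Integrating in $\xi$ and changing to Cartesian coordinates via $dx = s^{d-1}\, ds\, d\mathcal H^{d-1}(\xi)$ yields
\[
\omega_{d-1}\avint_{\mathbb S^{d-1}} |u(r\xi)-c|\, d\mathcal H^{d-1}(\xi) \;\le\; \int_{\{|x|>r\}} \frac{|\nabla u(x)|}{|x|^{d-1}}\, dx,
\]
where $\omega_{d-1}=\mathcal H^{d-1}(\mathbb S^{d-1})$. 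The right-hand side tends to $0$ as $r \to \infty$ by monotone convergence, provided $\int_{|x|\ge 1} |\nabla u(x)||x|^{-(d-1)}\, dx < \infty$; I would obtain this finiteness via a weighted H\"older inequality, pairing $|\nabla u|w^{1/p}$ against $w^{-1/p}|x|^{-(d-1)}$, reducing the question to the dual integrability $\int_{|x|\ge 1} w(x)^{-1/(p-1)}|x|^{-p(d-1)/(p-1)}\, dx < \infty$ (with an analogue for $p=1$), which should emerge as the quantitative characterization implicit in Theorem \ref{thm:single limit}. The second limit then follows from polar coordinates,
\[
\avint_{B(0,t)\setminus B(0,t/2)} |u-c|\, dx \;=\; \frac{\omega_{d-1}}{|B(0,t)\setminus B(0,t/2)|}\int_{t/2}^t r^{d-1}\avint_{\mathbb S^{d-1}}|u(r\xi)-c|\,d\mathcal H^{d-1}(\xi)\, dr,
\]
after letting $t \to \infty$. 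The third limit follows from the second by comparison: for $|x|$ large, $B(x,|x|/2) \subset B(0, 3|x|/2)\setminus B(0,|x|/2)$, and the ratio of Lebesgue measures is uniformly bounded.

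\textbf{Converse direction.} Suppose that for every $u$ there exists $\tilde d = \tilde d(u)$ so that one of the three average limits holds. I would verify condition (1) of Theorem \ref{thm:single limit}, i.e., $\liminf_{t \to \infty} |u(t\xi_0)| < \infty$ for some $\xi_0 \in \mathbb S^{d-1}$. If the spherical limit holds, Chebyshev along a sequence $r_n \to \infty$ yields a positive-$\mathcal H^{d-1}$-measure set of $\xi$ with (after a subsequence) $u(r_n\xi) \to \tilde d$, and any such $\xi_0$ works. If the annular limit holds, Fubini in the radial variable produces radii $r_n \to \infty$ with $\avint_{\mathbb S^{d-1}}|u(r_n\xi)-\tilde d|\, d\mathcal H^{d-1}(\xi) \to 0$, reducing to the spherical case. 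The ball-average case is handled analogously by applying Fubini inside $B(x_n, |x_n|/2)$ for a sequence $x_n$ with $|x_n| \to \infty$ along a fixed ray.

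The main technical obstacle is the finiteness $\int_{|x|\ge 1} |\nabla u||x|^{-(d-1)}\, dx < \infty$ in the forward direction. I expect the quantitative characterization underlying Theorem \ref{thm:single limit} to coincide with the dual integrability of $w$ identified above, so that the H\"older estimate closes immediately; if this identification is not automatic, a direct quantitative comparison via $p$-capacities of spherical annuli may be required to bridge the gap.
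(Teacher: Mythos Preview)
Your forward direction matches the paper's approach essentially step by step: the paper's Lemma~\ref{lem3.3} carries out exactly the radial fundamental-theorem-of-calculus estimate you describe, and the dual integrability you need,
\[
\int_{|x|\ge 1} w(x)^{-1/(p-1)}|x|^{-p(d-1)/(p-1)}\,dx < \infty,
\]
is precisely what the paper computes (at the start of the proof of Lemma~\ref{lem3.1}) to be comparable to $\mathcal R_p(w)$, whose finiteness is the characterization in Theorem~\ref{remark-c}. So your ``main technical obstacle'' dissolves once you invoke Theorem~\ref{remark-c} rather than Theorem~\ref{thm:single limit} directly; there is no gap, only a missing citation.

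Your converse direction is correct but takes a different route from the paper. The paper argues via the $\mathcal R_p$ quantity: by Corollary~\ref{cor3.4} and Lemma~\ref{lem3.5}, each of the three average limits holds for all $u$ if and only if $\mathcal R_p(w)<\infty$, and Lemma~\ref{lem3.5} supplies the contrapositive by constructing an explicit $u$ with $u(x)\to\infty$. You instead pull a good direction $\xi_0$ directly out of the averages via Chebyshev and Fubini, verifying condition~(1) of Theorem~\ref{thm:single limit} without mentioning $\mathcal R_p$ or the Lemma~\ref{lem3.5} construction. Your route is more self-contained at this point in the argument; the paper's is shorter because it recycles machinery already in place. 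One care point in your third (ball-average) case: since $B(x_n,|x_n|/2)$ is not centered at the origin, the radial Fubini should be applied to the spherical-cap sector $\{r\xi:\, r\in[3|x_n|/4,5|x_n|/4],\ |\xi-x_n/|x_n||<\delta\}$ that the ball contains (with $\delta>0$ uniform in $n$), after which your Chebyshev step goes through on that cap rather than on all of $\mathbb S^{d-1}$.
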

\noindent Here $\avint_Afd\nu:=\frac{1}{\nu(A)}\int_Afd\nu$ for any given measure $\nu$, set $A$ with $\nu(A)>0$, and integrable function $f$ on $A$.

The crucial tool to prove these theorems is a quantity measuring the $p$-capacity at infinity; see e.g. \cite{HKM06} for the definition of capacity. Given
a locally integrable function $w$ with $w(x)>0$ for almost every $x\in \mathbb R^d,$ we define $w^s(A)=\left(\int_A w dx\right)^s$ when $A$ has strictly positive Lebesgue measure and $s\in \R$. We set 
\begin{equation}\label{eq:rpgtr1}
\Rp(w):=\sum_{i\in\mathbb N}(2^i)^{\frac{p}{p-1}}w^{\frac{1}{1-p}}(A_i) \text{\rm \ \ if $p>1$,}
\end{equation}
and
\begin{equation}\label{eq:rpeq1}
\Ri(w):=\sup_{i\in\mathbb N}(2^iw^{-1}(A_i))
\end{equation}
where $A_i:=\{x\in\mathbb R^d:2^i\leq |x|<2^{i+1}\}$ for $i\in\mathbb N$.

The finiteness of the quantity $\Rp(w)$, for $\mathcal A_p$-weights, actually characterizes when the family of curves $\gamma_\xi :[1,\infty) \to \R^d$ given by $\gamma_\xi(t) =  t\xi$ has positive $p$-modulus, but
neither this concept nor this result will be directly needed in this paper. We refer the reader to \cite{HKST07} for a discussion on modulus and to \cite{KN21} for further results. 
This phenomenon underlies the following theorem.

\begin{theorem}
	\label{remark-c} Let $w\in \mathcal A_p(\mathbb R^d)$ where $1\leq p<\infty$ and $d\geq 2$. Then the following two conditions are equivalent:
	\begin{enumerate}
		\item $\mathcal R_p(w)<\infty$.
		\item Every $u\in\dot W^{1,p}(\mathbb R^d,w)$ has a unique almost sure  radial limit.
	\end{enumerate}
	Moreover,   when either of these equivalent conditions is satisfied, 
		\[ \int_{\mathbb S^{d-1}}|u(r\xi)-c| d\mathcal H^{d-1}\lesssim \|\nabla u\|_{L^p(\mathbb R^{d}\setminus B(0,r),w)} \text{\rm  \ \ and\ \ }\avint_{B(0,r)\setminus B(0,r/2)}|u(x)-c|dx\lesssim \|\nabla u\|_{L^p(\mathbb R^{d}\setminus B(0,r/2),w)} 
		\]
for each $r>0$ and
	for every $u\in\dot W^{1,p}(\mathbb R^d,w)$.
	\end{theorem}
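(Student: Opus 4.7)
The plan is to prove $(1)\Rightarrow(2)$ together with the quantitative bounds by a telescoping argument along dyadic radii coupled with a weighted Hölder inequality, and then to prove $(2)\Rightarrow(1)$ by constructing a radial counterexample via $\ell^p$-$\ell^{p'}$ duality; uniqueness of the limiting constant will be supplied by Theorem~\ref{thm:single limit}.

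Assume $(1)$. Since $w\in\mathcal A_p$, Sobolev representatives are absolutely continuous on $\mathcal H^{d-1}$-a.e.\ radial ray (cf.\ \cite{HKM06}). For $p>1$ the fundamental theorem of calculus along such a ray together with the polar coordinate formula yields
\[
\int_{\mathbb S^{d-1}}\bigl|u(2^{i+1}\xi)-u(2^i\xi)\bigr|\,d\mathcal H^{d-1}(\xi)\leq \int_{A_i}|\nabla u(x)|\,|x|^{-(d-1)}\,dx.
\]
Applying Hölder's inequality with the weight $w$, using $|x|\sim 2^i$ on $A_i$, and invoking the standard $\mathcal A_p$ dual estimate $\int_{A_i}w^{1/(1-p)}\,dx\lesssim |A_i|^{p'}\,w^{1/(1-p)}(A_i)$ (with $|A_i|\sim 2^{id}$), the second Hölder factor is bounded by $C\cdot 2^iw(A_i)^{-1/p}$, so the left-hand side is controlled by $\|\nabla u\|_{L^p(A_i,w)}$ times the $(1/p')$-th power of the $i$-th summand of $\mathcal R_p(w)$. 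Summing over $i\geq i_0$ and applying Hölder in the $i$-sum gives
\[
\sum_{i\geq i_0}\int_{\mathbb S^{d-1}}\bigl|u(2^{i+1}\xi)-u(2^i\xi)\bigr|\,d\mathcal H^{d-1}(\xi)\lesssim \|\nabla u\|_{L^p(\{|x|\geq 2^{i_0}\},w)}\cdot\Bigl(\sum_{i\geq i_0}(2^i)^{p'}w^{1/(1-p)}(A_i)\Bigr)^{1/p'}\!,
\]
which is finite and vanishes as $i_0\to\infty$. The case $p=1$ is analogous, replacing Hölder by the $\mathcal A_1$ pointwise bound $w^{-1}\lesssim |A_i|/w(A_i)$ on $A_i$ and using the supremum defining $\mathcal R_1(w)$.

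This summability forces $u(2^i\xi)\to c_\xi$ for a.e.\ $\xi$, and since the terms $\int_{2^i}^{2^{i+1}}|\nabla u(s\xi)|\,ds$ also tend to zero a.e., oscillations within each annulus vanish, so $\lim_{t\to\infty}u(t\xi)=c_\xi\in\mathbb R$ exists for a.e.\ $\xi$. In particular $\liminf_{t\to\infty}|u(t\xi)|<\infty$ for some $\xi$ and every $u\in\dot W^{1,p}(\mathbb R^d,w)$, so Theorem~\ref{thm:single limit} supplies a direction-independent constant $c$ with $c_\xi=c$ for a.e.\ $\xi$. For the quantitative estimate, restricting the telescoping to indices $i\geq\lceil\log_2 r\rceil$ and separately bounding the residual piece $\{r\leq|x|<2^{\lceil\log_2 r\rceil+1}\}$ by the same Hölder argument yields
\[
\int_{\mathbb S^{d-1}}|u(r\xi)-c|\,d\mathcal H^{d-1}(\xi)\lesssim \|\nabla u\|_{L^p(\mathbb R^d\setminus B(0,r),w)}.
\]
Integrating this in $t\in[r/2,r]$ using $t^{d-1}\leq r^{d-1}$, dividing by $|B(0,r)\setminus B(0,r/2)|\sim r^d$, and using monotonicity of the tail norm in $r$ gives the averaged inequality.

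For $(2)\Rightarrow(1)$, suppose $\mathcal R_p(w)=\infty$ and consider $u(x)=f(|x|)$ with $f$ piecewise linear, $f(0)=0$, and $f(2^{i+1})-f(2^i)=a_i\geq 0$, so $|\nabla u|\equiv a_i/2^i$ on $A_i$ and $\|\nabla u\|_{L^p(\mathbb R^d,w)}^p\sim \sum_i(a_i/2^i)^pw(A_i)$. Put $\alpha_i=2^iw(A_i)^{-1/p}$, so that $\sum\alpha_i^{p'}=\mathcal R_p(w)=\infty$; by $\ell^p$-$\ell^{p'}$ duality (or Banach--Steinhaus applied to the functionals $\beta\mapsto\sum_{i\leq N}\alpha_i\beta_i$) one can pick $\beta_i\geq 0$ with $\sum\beta_i^p<\infty$ and $\sum\alpha_i\beta_i=\infty$. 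Setting $a_i:=\alpha_i\beta_i$ gives $\sum(a_i/2^i)^pw(A_i)=\sum\beta_i^p<\infty$ and $\sum a_i=\sum\alpha_i\beta_i=\infty$, so $u(t\xi)=f(t)\to+\infty$ for every $\xi$, contradicting the existence of a finite almost sure radial limit; the $p=1$ case follows by choosing a single index $i_0$ with $2^{i_0}/w(A_{i_0})$ arbitrarily large. The chief obstacle is the weighted Hölder bookkeeping in the first step: the powers of $2^i$ from the polar Jacobian and from $|x|^{-(d-1)p'}$ must cancel against the $|A_i|^{p'}$ factor produced by the $\mathcal A_p$ dual estimate so that the output has exactly the normalization $(2^i)^{p'}w^{1/(1-p)}(A_i)$ appearing in $\mathcal R_p(w)$; once that alignment is checked, everything else reduces to standard telescoping and classical duality.
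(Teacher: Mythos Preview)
Your argument for the existence of radial limits, for the quantitative sphere and annulus bounds, and for the construction of a radial counterexample when $\mathcal R_p(w)=\infty$ is essentially the same as the paper's (Lemmas~\ref{lem3.1}, \ref{lem3.3}, \ref{lem3.5}), and your H\"older bookkeeping lands on the right normalization $(2^i)^{p'}w^{1/(1-p)}(A_i)$.

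There is, however, a genuine gap in the uniqueness step. You obtain directional limits $c_\xi$ and then invoke Theorem~\ref{thm:single limit} to conclude that $c_\xi$ is independent of~$\xi$. In the paper's logical order this is circular: the implication $(1)\Rightarrow(2)$ of Theorem~\ref{thm:single limit} is proved \emph{via} Theorem~\ref{remark-c} (together with Lemma~\ref{lem3.5}), so it cannot be cited inside a proof of Theorem~\ref{remark-c}. The paper instead establishes uniqueness directly inside Lemma~\ref{lem3.1}: assuming two distinct limiting values are each attained on a set of $\xi$ of positive $\mathcal H^{d-1}$-measure, one finds for every large $j$ points $x\in E_j$, $y\in F_j$ in the same annulus with $|u(x)-u(y)|\ge 1$; then either a Poincar\'e chaining of balls of radius $\sim 2^{j-2}$ bounds $1$ by a vanishing energy tail, or the pointwise estimate~\eqref{pisteittainen} combined with the weak-type maximal bound (Theorem~\ref{thm2.7}) forces $\min\{w(E_j),w(F_j)\}$ to be dominated by a vanishing energy tail, contradicting the lower bound coming from $\mathcal H^{d-1}(E),\mathcal H^{d-1}(F)\ge\delta$ and $\mathcal R_p(w)<\infty$. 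This argument is not a formality, and nothing in your write-up replaces it.

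A smaller point: your $p=1$ counterexample sketch (``choose a single index $i_0$'') is too thin; one needs a sequence $b_k$ with $2^{b_k}w^{-1}(A_{b_k})>2^k$ and must sum the corresponding bumps to get a single function diverging along every ray, as in Lemma~\ref{lem3.5}.
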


The above theorems  characterize the property of having unique almost sure radial limits. We refer the interested readers to \cite{KN21} for a version of this theorem on Carnot groups. We next turn our discussion to the case of weighted Sobolev spaces and vertical limits. 

\subsection{Vertical limits}

The example in Remark \ref{ex:radialweight} suggests that the existence of an almost sure vertical limit is a stronger property than the existence of a radial limit. Indeed, this is the case by the following argument. If radial limits fail to exist, then, by the proof of Theorem \ref{thm:single limit}, there exists a function $u$ with $\lim_{|x|\to \infty} u(x)=\infty$. Such a function fails to have any vertical limits.

However, even for radial weights $w$ with $\Rp(w)<\infty$, Remark \ref{ex:radialweight} together with Theorem \ref{remark-c} shows that vertical limits may fail to exist. The issue is that whenever one is able to construct cubes $(Q_i)_{i\in \N}$ marching off to infinity with $w(Q_i) \to 0$,  one can place ``bumps'' in them. Indeed, this yields a necessary condition for having almost sure vertical limits.

\begin{theorem}\label{thm:cubed-average} Let $w\in \A_p(\mathbb R^d)$ where {$1\leq p<d$} and $d\geq 2$. If every $u \in \dot W^{1,p}(\R^d,w)$ has a unique almost sure vertical limit, then $\Rp(w)<\infty$ and for every cube $Q \subset \R^{d-1}$ it holds that $\inf_{z\in \mathbb N} w(Q \times [z,z+1]) > 0$.
\end{theorem}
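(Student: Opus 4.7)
The theorem has two parts, and I would prove each by contrapositive.

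For the necessity of $\Rp(w)<\infty$: if $\Rp(w)=\infty$, Theorem \ref{remark-c} yields some $u\in\dot W^{1,p}(\R^d,w)$ without a unique almost sure radial limit, and---as noted in the paragraph opening this subsection---the proof of Theorem \ref{thm:single limit} in fact refines this to the existence of a $u\in\dot W^{1,p}(\R^d,w)$ with $\lim_{|x|\to\infty}u(x)=\infty$. Such a function has no finite vertical limit, contradicting the hypothesis.

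For the cube condition I would argue by contradiction: assume there is a cube $Q\subset\R^{d-1}$ and a sequence $z_k\in\N$ with $w(Q\times[z_k,z_k+1])\to 0$. Since $w\in\A_p$ is strictly positive a.e., each individual value $w(Q\times[z,z+1])$ is positive, so $z_k$ cannot take any single value infinitely often; after extracting a subsequence, $z_k\to\infty$ and $z_{k+1}\ge z_k+2$. Fix a Lipschitz cutoff $\psi\colon\R^{d-1}\to[0,1]$ with $\psi\equiv 1$ on $Q$ and supported in a slight dilation $\tilde Q\supset Q$, and let $\phi_k\colon\R\to[0,1]$ be the piecewise linear tent equal to $1$ on $[z_k+1/4,z_k+3/4]$ and supported in $[z_k,z_k+1]$. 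The products $u_k(\bar x,t):=\psi(\bar x)\phi_k(t)$ have $|\nabla u_k|$ uniformly bounded in $k$ and supported in $\tilde Q\times[z_k,z_k+1]$. Exploiting that $w\in\A_p$ is doubling with a constant independent of location and scale, and that the boxes $\tilde Q\times[z_k,z_k+1]$ are merely $e_d$-translates of a single box, one obtains
\[
\|\nabla u_k\|_{L^p(\R^d,w)}^p\;\le\; C\,w\bigl(\tilde Q\times[z_k,z_k+1]\bigr)\;\le\; C'\,w\bigl(Q\times[z_k,z_k+1]\bigr)\;\longrightarrow\;0,
\]
with $C,C'$ independent of $k$. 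After passing to a further subsequence making $\sum_k w(Q\times[z_k,z_k+1])<\infty$, set $u:=\sum_k u_k$.

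Because the summands have pairwise disjoint supports, $u$ is bounded and continuous (hence its own Lebesgue representative), lies in $L^p_{\loc}(\R^d,w)$, and satisfies $\|\nabla u\|_{L^p(\R^d,w)}^p=\sum_k\|\nabla u_k\|_{L^p(\R^d,w)}^p<\infty$; thus $u\in\dot W^{1,p}(\R^d,w)$. Yet for every $\bar x\in Q$ we have $u(\bar x,z_k+1/2)=1$ and $u(\bar x,z_k+3/2)=0$ for all large $k$, so $\lim_{t\to\infty}u(\bar x,t)$ does not exist. Since $Q$ has positive Lebesgue measure in $\R^{d-1}$, the function $u$ has no unique almost sure vertical limit, the desired contradiction.

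The main obstacle is the second inequality in the displayed estimate: uniformly in $k$, the $w$-measure of the slightly thickened box must be controlled by that of the original box. This is exactly what the translation-invariant doubling of a Muckenhoupt weight provides; once it is in place, the rest is a standard disjoint-bumps-at-infinity construction, adapted to the weighted setting.
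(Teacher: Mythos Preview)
Your proof is correct and follows essentially the same approach as the paper: for $\Rp(w)<\infty$ both invoke the function from Lemma~\ref{lem3.5} that blows up at infinity, and for the cube condition both place disjoint Lipschitz bumps on the boxes $Q\times[z_k,z_k+1]$ of vanishing $w$-mass. The paper packages the second construction as Example~\ref{example:rough} (using bumps $\psi_{Q_{2i}}$ supported directly in $Q_i$, which spares you the doubling step comparing $\tilde Q\times[z_k,z_k+1]$ to $Q\times[z_k,z_k+1]$), but your explicit version is equivalent.
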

A more technical necessary condition will be seen in condition (1) of Theorem \ref{thm:pekka-mizuta-2nd}. The necessity of this condition is implied by Lemma \ref{lem:av-est} below.

\begin{remark}\label{rmk:p=1}
We remark briefly on the case of $p=1$. In this case Theorem \ref{thm:cubed-average} is an equivalence. If $\inf_{z\in \mathbb N} w(Q \times [z,z+1])>0$, then the fact that $w\in \A_1$ implies that there is a constant $c>0$ so that $w \geq c$ for a.e. $x\in 2Q \times [0,\infty)$. Let $\phi_Q$ be a  bounded $1$-Lipschitz function that equals to $1$ on $ Q\times [-1,\infty)$ and $0$ outside of $2Q \times [-1,\infty)$. Then, $\phi_Q u \in W^{1,1}(\R^d)$ and Fefferman's result implies that $\phi_Q u$ has a unique almost sure vertical limit. 
We also note that it is direct to verify that $\inf_{z\in \mathbb N} w(Q \times [z,z+1])>0$ implies that $\mathcal R_1(w)<\infty$. For this reason, in what follows, we will focus on the case $p>1$.
\end{remark}

Our next theorem gives a characterization of the existence of limits of certain averages. The condition is a slight strengthening of the one appearing in the previous theorem. Given a cube $Q\subset \mathbb R^d,$ we refer to its edge length by $\ell(Q).$ For a sequence of cubes $(Q_i)_{i\in\N}$, we define $Q_i \to \infty$ to mean that for every $R>0$, there exists an integer $N\in \N$ so that for $i \geq N$ we have $B(0,R) \cap Q_i = \emptyset$.

\begin{theorem}\label{lisatty}
Let $w\in \A_p(\mathbb R^d)$ where $1\le p<d$ and $d\ge 2.$ The following two conditions are equivalent:
\begin{enumerate} 
\item We have 
\begin{equation}\label{eq:lower-mass-bound}
\inf_{\ell(Q)=1} w(Q)>0.
\end{equation}
\item For every $u \in \Wdp(\R^d,w)$ there exists a unique constant $c$ so that for every sequence of cubes $Q_i \subset \R^{d}$ with $\liminf_{i\to\infty} \ell(Q_i)>0$ 
and $Q_i\to\infty$ it holds that

\[
\lim_{i\to\infty} \avint_{Q_i} u d\mu = c.
\]
\end{enumerate}
\end{theorem}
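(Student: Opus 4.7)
My plan splits into two implications. For $(2)\Rightarrow(1)$, I argue by contrapositive: if $\inf_{\ell(Q)=1}w(Q)=0$, pick unit cubes $(P_i)$ with $w(P_i)\le 2^{-i}$. Since $w\in\A_p$ is doubling and strictly positive on bounded sets, $w(P_i)\to 0$ forces $P_i\to\infty$, so after thinning I may assume the doubled cubes $\{2P_i\}$ are pairwise disjoint. Taking Lipschitz bumps $\phi_i$ with $\phi_i\equiv 1$ on $P_i$ and $\phi_i\equiv 0$ outside $2P_i$, the sum $u=\sum_i\phi_i$ satisfies $\|\nabla u\|_{L^p(w)}^p\lesssim\sum_i w(2P_i)\lesssim\sum_i w(P_i)<\infty$ by doubling, so $u\in\Wdp(\R^d,w)$, but $\avint_{P_i}u\,d\mu=1$ while $\avint_{P'_i}u\,d\mu=0$ for unit cubes $P'_i\to\infty$ avoiding the bumps, contradicting (2).

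For $(1)\Rightarrow(2)$, I first verify $\Rp(w)<\infty$: each dyadic annulus $A_i=B(0,2^{i+1})\setminus B(0,2^i)$ contains $\gtrsim 2^{id}$ disjoint unit cubes with $w$-mass at least $c_0=\inf_{\ell(Q)=1}w(Q)>0$, so $w(A_i)\gtrsim 2^{id}$ and the defining series reduces to a convergent geometric sum with ratio $2^{(p-d)/(p-1)}<1$ (using $p<d$). Theorem \ref{remark-c} then produces a unique almost-sure radial limit $c$ with $\avint_{A(r)}|u-c|\,dx\lesssim\|\nabla u\|_{L^p(B(0,r/2)^c,w)}\to 0$, where $A(r):=B(0,r)\setminus B(0,r/2)$.

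The core step is to pass from this annular unweighted convergence to $\avint_{Q_i}u\,d\mu\to c$. Weighted Poincar\'e on $Q_i$ (valid for $\A_p$ weights) gives
\[
\avint_{Q_i}|u-u_{Q_i,w}|^p\,d\mu\lesssim\frac{\ell(Q_i)^p}{w(Q_i)}\,\|\nabla u\|_{L^p(Q_i,w)}^p,
\]
and I split into two regimes. If $\ell(Q_i)$ stays bounded, then (1) plus the $\A_\infty$ doubling property yields $w(Q_i)\ge c_1>0$ uniformly and $\|\nabla u\|_{L^p(Q_i,w)}\to 0$ by dominated convergence. If $\ell(Q_i)\to\infty$, then (1) upgrades to $w(Q_i)\gtrsim\ell(Q_i)^d$, reducing the prefactor to $\ell(Q_i)^{p-d}$, which vanishes since $p<d$. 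Either way, $\avint_{Q_i}|u-u_{Q_i,w}|\,d\mu\to 0$, so it remains to identify $u_{Q_i,w}\to c$. For this I truncate: with $u_M=(-M)\vee u\wedge M$ for $M\ge|c|$, the function $u_M\in\Wdp(\R^d,w)$ still has radial limit $c$ and is bounded by $M$, so applying Theorem \ref{remark-c} to $u_M$ combined with the $\A_\infty$ density estimate $w(E)/w(A(r))\lesssim(|E|/|A(r)|)^\alpha$ on super-level sets $E$ of $|u_M-c|$ yields the weighted statement $\avint_{A(r)}|u_M-c|\,d\mu\to 0$. A doubling Poincar\'e-chain of nested cubes from $Q_i$ up to an enclosing scale $r_i\sim\sup_{Q_i}|x|$, whose step-by-step telescoping differences $|u_{R_k,w}-u_{R_{k+1},w}|$ are controlled by $\ell(R_{k+1})^{1-d/p}\|\nabla u\|_{L^p(R_{k+1},w)}$ and sum geometrically (ratio $2^{1-d/p}<1$), then links $u_{Q_i,w}$ to the weighted annular average of $u_M$, producing $u_{Q_i,w}\to c$ for fixed $M$.

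The main obstacle is the tail $v_M:=(|u|-M)_+$: I must show that its contribution to $\avint_{Q_i}|u-c|\,d\mu$ vanishes as $M\to\infty$, uniformly in $i$, even though $u$ need not be globally in $L^p(w)$. My plan is to exploit that $v_M\in\Wdp(\R^d,w)$ with $|\nabla v_M|\le|\nabla u|\mathbf{1}_{\{|u|>M\}}$ and radial limit $0$; a second invocation of Theorem \ref{remark-c} then gives $\avint_{A(r)}v_M\,dx\lesssim\|\nabla u\|_{L^p(\{|u|>M\}\cap B(0,r/2)^c,w)}$, which tends to $0$ as $M\to\infty$ uniformly in $r$ by dominated convergence, and a repeat of the $\A_\infty$/weighted-Poincar\'e transfer brings this smallness down to $\avint_{Q_i}v_M\,d\mu$, closing the argument.
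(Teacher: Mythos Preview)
Your $(2)\Rightarrow(1)$ is fine, and your chain idea in $(1)\Rightarrow(2)$ is essentially the paper's: telescoping through a geometric sequence of cubes whose $w$-mass grows like $\ell^d$ gives $|u_{Q_i,w}-u_{R_K,w}|\lesssim_\delta\|\nabla u\|_{L^p(\R^d\setminus B(0,R),w)}$ once you arrange (as the paper does, by recentering each successive cube at the corner farthest from the origin) that the whole chain stays outside $B(0,R)$. Your steps showing $\avint_{Q_i}|u-u_{Q_i,w}|\,d\mu\to 0$ are superfluous: the statement asks only for $u_{Q_i,w}\to c$.

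The genuine gap is in identifying the limit, i.e.\ in your truncation scheme for the tail $v_M=(|u|-M)_+$. Your $\A_\infty$ transfer from unweighted annular smallness $\avint_{A(r)}|u_M-c|\,dx\to 0$ to weighted smallness $\avint_{A(r)}|u_M-c|\,d\mu\to 0$ works precisely because $|u_M-c|\le M+|c|$: the super-level-set estimate $w(E)/w(A(r))\lesssim(|E|/|A(r)|)^\alpha$ only controls the weighted integral of a \emph{bounded} function. For $v_M$ you have only $\avint_{A(r)}v_M\,dx\lesssim\|\nabla u\|_{L^p(\{|u|>M\},w)}$, and there is no general mechanism to upgrade this to $\avint_{A(r)}v_M\,d\mu$ small---a nonnegative function with tiny Lebesgue average on $A(r)$ can concentrate where $w$ is large. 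Consequently ``repeating the $\A_\infty$/weighted-Poincar\'e transfer'' does not close the argument: after chaining you are still left with $(v_M)_{A(r_i),w}$ (or equivalently $(v_M)_{R_K,w}$) to control, and you have no handle on it uniformly in $r_i$. In fact one can check that your bounded-case result forces $\limsup_i(v_M)_{Q_i,w}$ to be \emph{independent of $M$}, so letting $M\to\infty$ gains nothing.

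The paper avoids truncation altogether. Its Lemma~\ref{lem:w-average} identifies $\lim_k u_{\tilde Q_k,w}=c$ directly for cubes $\tilde Q_k$ whose size is comparable to their distance from the origin, via an Egorov argument: from the a.e.\ radial convergence one extracts a set $E_k\subset\tilde A_k$ with $|E_k|\gtrsim|\tilde A_k|$ on which $|u-c|\le\varepsilon$; the $\A_\infty$ property then gives $w(E_k)\gtrsim w(\tilde A_k)$, so $|u_{E_k,w}-c|\le\varepsilon$, and a single Poincar\'e step moves this to $u_{\tilde A_k,w}$. This handles the possibly unbounded $u$ in one stroke; the chain then telescopes from $Q_i$ to these large cubes, yielding $|u_{Q_i,w}-c|\lesssim_\delta\|\nabla u\|_{L^p(\R^d\setminus B(0,R),w)}\to 0$.
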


Indeed, the proof of the theorem will show that $c$ coincides with the unique almost sure radial limit. 
Even though we have a characterization  for the existence of limits of rough averages, the existence of vertical limits is more subtle. In order to move from the rough average limits in the above statement to vertical limits, 
one needs additional assumptions. To begin, consider an additional exponent $q\in [1,p]$, with $w \in \A_q$. Note that then $\A_q \subset \A_p$, and this is thus potentially a stronger requirement. For certain ranges of $q$ and $p$,  the existence of certain rough averages is equivalent to the existence of vertical limits.

\begin{theorem}\label{thm:vertical-lowerbound}
Let $w\in \mathcal A_q(\mathbb R^d)$ where $1\leq 
q \leq p<\infty$ and $d\geq 2$. Suppose that $\inf_{z\in\mathbb N}w(Q\times[z,z+1])>0$ for every cube $Q\subset\mathbb R^{d-1}$. If   
$qd-d+1<p<d$, 
then the following two conditions are equivalent:
\begin{enumerate}
\item For every $u\in \dot W^{1,p}(\R^d,w)$ there exists a constant $c\in \R$ for which $\lim_{z\to \infty, z\in\mathbb N} u_{Q\times [z,z+1]} = c$ for each cube $Q\subset \mathbb R^{d-1}$ of unit size.
\item Every $u\in \dot W^{1,p}(\mathbb R^d,w)$ has a unique almost sure vertical limit.
\end{enumerate}
\end{theorem}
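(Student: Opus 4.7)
The implication $(2)\Rightarrow(1)$ is in fact a consequence of Theorem \ref{lisatty} alone. Any unit cube $R\subset\R^d$ is contained in a set of the form $Q\times[z-1,z+1]$ for some unit cube $Q\subset\R^{d-1}$ and $z\in\N$, so the standing hypothesis $\inf_{z\in\N}w(Q\times[z,z+1])>0$ propagates to the lower mass bound $\inf_{\ell(R)=1}w(R)>0$. Theorem \ref{lisatty} then supplies a unique constant $c$ (coinciding with the almost sure radial limit of $u$) such that $u_{Q\times[z,z+1]}\to c$ as $z\to\infty$ for every unit cube $Q\subset\R^{d-1}$, which is exactly (1); in particular, if $u$ also has a unique almost sure vertical limit, Fubini forces it to equal this $c$.

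For the substantive direction $(1)\Rightarrow(2)$, fix $u\in\dot W^{1,p}(\R^d,w)$ with common constant $c$, and fix a unit cube $Q\subset\R^{d-1}$. Since $w\in\A_p$, Lemma \ref{lem:acclines} ensures the Lebesgue representative of $u$ is absolutely continuous along almost every vertical line, so it suffices to show $u(\bar x,t)\to c$ as $t\to\infty$ for a.e. $\bar x\in Q$. Splitting, for $t\in[z,z+1]$,
\[|u(\bar x,t)-c|\leq |u(\bar x,t)-u_{Q\times[z,z+1]}|+|u_{Q\times[z,z+1]}-c|,\]
the second term vanishes by hypothesis, and the whole task reduces to a pointwise oscillation estimate on each cylinder $Q\times[z,z+1]$.

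The heart of the proof, and what I expect to be the principal obstacle, is a pointwise bound of the form
\[\sup_{t\in[z,z+1]}|u(\bar x,t)-u_{Q\times[z,z+1]}|^p\leq\Phi_z(\bar x)\quad\text{for a.e. }\bar x\in Q,\]
with $\sum_{z\in\N}\int_Q\Phi_z(\bar x)\,d\bar x\lesssim\|\nabla u\|_{L^p(\tilde Q\times[0,\infty),w)}^p$ for a slight enlargement $\tilde Q\supset Q$. The plan is to chain the weighted $(1,p)$-Poincar\'e inequalities available for $\A_p$-weights (see Subsection \ref{subsec:sobolev}) along a Whitney-type family of balls linking $(\bar x,t)$ to the cylinder average, producing a Riesz-potential representation
\[|u(\bar x,t)-u_{Q\times[z,z+1]}|\lesssim\int_{\tilde Q\times[z-1,z+2]}|\nabla u(y)|\,K_z(\bar x,t,y)\,dy.\]
The uniform lower bound $w(Q\times[z,z+1])\ge\delta$ normalizes the Poincar\'e constants from one cylinder to the next, while the stronger Muckenhoupt condition $w\in\A_q$ with the gap $qd-d+1<p<d$ is precisely what produces a uniform H\"older-dual estimate on the kernel $K_z$. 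Heuristically, this exponent range realizes a Sobolev--Morrey-type embedding of an $\A_q$-weighted $W^{1,p}$-space into functions continuous on generic one-dimensional vertical slices; at $q=1$ it recovers the Fefferman--Portnov range $1<p<d$.

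Once the pointwise estimate is in place, Fubini together with $\|\nabla u\|_{L^p(\R^d,w)}<\infty$ gives $\sum_z\Phi_z(\bar x)<\infty$ for a.e. $\bar x\in Q$, whence $\Phi_z(\bar x)\to 0$. Combined with $u_{Q\times[z,z+1]}\to c$ and the absolute continuity of $u$ on the vertical line through $\bar x$, this forces $u(\bar x,t)\to c$ uniformly in $t\in[z,z+1]$ as $z\to\infty$, i.e. as $t\to\infty$. Exhausting $\R^{d-1}$ by countably many unit cubes $Q$ yields the unique almost sure vertical limit and completes the proof.
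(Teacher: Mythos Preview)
Your plan for $(2)\Rightarrow(1)$ contains a genuine error. The inference ``$\inf_{z\in\N}w(Q\times[z,z+1])>0$ for every $Q$'' $\Longrightarrow$ ``$\inf_{\ell(R)=1}w(R)>0$'' is false: the standing hypothesis gives a lower bound $\delta_Q>0$ for each fixed $Q\subset\R^{d-1}$, with no uniformity in $Q$. A product weight such as $w(\bar x,t)=|\bar x|^\beta$ with $\beta\in(-(d-1),0)$ satisfies the standing hypothesis (since $w(Q\times[z,z+1])=w_1(Q)>0$ is independent of $z$) but has $\inf_{\ell(R)=1}w(R)=0$, so Theorem~\ref{lisatty} is unavailable. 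Notice also that you never actually invoke assumption $(2)$ in this direction. The paper instead uses $(2)$ substantively: Lemma~\ref{Syl} shows that if $\sup_{t>0}\Rp(w_t)=\infty$ one can build a function without vertical limits, so $(2)$ forces $\sup_{t>0}\Rp(w_t)<\infty$; Lemma~\ref{lem:av-est} (combined with Remark~\ref{rmk:Rpcomparison}) then yields the convergence of the cylinder averages to the radial limit.

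For $(1)\Rightarrow(2)$ your approach diverges from the paper's and the key step is not justified. You postulate a pointwise control
\[
\sup_{t\in[z,z+1]}|u(\bar x,t)-u_{Q\times[z,z+1]}|^p\le\Phi_z(\bar x),\qquad \sum_{z}\int_Q\Phi_z(\bar x)\,d\bar x\lesssim\|\nabla u\|_{L^p(w)}^p,
\]
via a Riesz-kernel representation and an unproved ``H\"older-dual estimate on $K_z$''. This is a strong mixed-norm trace inequality (an $L^\infty_t L^p_{\bar x}$ bound) and you do not explain how the exponent gap $qd-d+1<p$ produces it; simply chaining the weighted $(1,p)$-Poincar\'e inequality yields a fractional maximal function of $|\nabla u|^p$, not a summable $\Phi_z$. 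The paper avoids any such pointwise $L^1$-bound altogether: it applies the pointwise estimate \eqref{pisteittainen} with a fractional maximal function $\mathcal M_{p-\alpha}$, then Theorem~\ref{thm2.8} to bound the Hausdorff $(qd-p+\alpha)$-content of the bad set $E_n=\{|u-u_{Q\times[n,n+1]}|>a_n\}$ for a carefully chosen sequence $a_n\to 0$ with $\sum_n a_n^{-p}\int_{Q\times[n,n+1]}|\nabla u|^p\,d\mu<\infty$. The role of $qd-d+1<p$ is then transparent: one can choose $\alpha>0$ with $qd-p+\alpha\le d-1$, so the limsup set has $\mathcal H^{d-1}$-measure zero and its projection to $\R^{d-1}$ is Lebesgue-null. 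This Borel--Cantelli/Hausdorff-content mechanism is the missing idea in your outline.
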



We remark that, by Theorem \ref{thm:cubed-average}, the assumption $\inf_{z\in\mathbb N}w(Q\times[z,z+1])>0$ is necessary for (2) to  hold. The main content here is that the existence of average limits, under weights of certain types, is equivalent to the existence of pointwise limits. 
From the proof, it also follows that the unique almost sure vertical limit of $u$ is the constant $c$ from the first condition.

The conclusion of the theorem is also true when $q>1$ and $p=qd-d+1$. Indeed, Muckenhoupt $\A_q$-weights with $q>1$ satisfy a self-improvement property: for every $w\in \A_q$ there exists an $\epsilon>0$ so that $w\in \A_{q-\epsilon}$; see \cite[Chapter V]{Stein}.
Thus, the previous result  fully characterizes the existence of almost sure vertical limits when $qd-d+1\le p<d$.  However, once 
$p<qd-d+1$,
the question becomes more delicate: we are only able to give sufficient conditions for the existence of almost sure vertical limits. These take the form of either higher order integrability, or a special product structure for the measure.

\begin{theorem}\label{thm:pekka-mizuta} Let $1\leq q\leq \frac{p+d-1}{d}$ where $1<p<d$ and $d\geq 2$. If $w\in \mathcal \A_q(\mathbb R^d)$ satisfies $\inf_{\ell(Q)=1} w(Q)>0$,
then  every $u\in \dot W^{1,p}(\mathbb R^d,w)$ has a unique almost surely  vertical limit. Further, the vertical limit value equals the almost sure radial limit.
\end{theorem}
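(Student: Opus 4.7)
The plan is to deduce Theorem~\ref{thm:pekka-mizuta} from Theorems~\ref{lisatty} and~\ref{thm:vertical-lowerbound}, with the self-improvement of Muckenhoupt weights bridging the small gap between the hypotheses.

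First I would verify that the hypothesis $\inf_{\ell(Q)=1} w(Q) > 0$, together with the doubling property of $w \in \A_q$, forces $\inf_{z \in \N} w(Q \times [z, z+1]) > 0$ for every fixed cube $Q \subset \R^{d-1}$: for unit cubes this is the hypothesis itself, and for cubes of a different (but fixed) size one controls $w(Q \times [z,z+1])$ by a finite, $z$-independent number of doublings from an enclosing or enclosed unit cube. With this in hand, Theorem~\ref{lisatty} applied to $u \in \Wdp(\R^d, w)$ produces a unique constant $c$---which, as remarked after that theorem, coincides with the almost sure radial limit of $u$---such that $\lim_{z \to \infty} \avint_{Q \times [z, z+1]} u\, d\mu = c$ for every unit cube $Q \subset \R^{d-1}$. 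This is precisely condition~(1) of Theorem~\ref{thm:vertical-lowerbound}.

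It then remains to invoke Theorem~\ref{thm:vertical-lowerbound} with an exponent $q' \in [1, p]$ such that $w \in \A_{q'}$ and the \emph{strict} inequality $q' d - d + 1 < p$ holds. The hypothesis $q \leq \tfrac{p+d-1}{d}$ rewrites as $qd - d + 1 \leq p$, which may be an equality. If $q = 1$, then $1 < p$ already gives the strict inequality and I take $q' = q$. If $q > 1$, the self-improvement of $\A_q$ supplies $\epsilon > 0$ with $w \in \A_{q-\epsilon}$; setting $q' := \max\{1, q - \epsilon/2\}$, one checks $q' d - d + 1 < p$ in both sub-cases (using $p > 1$ in the sub-case $q' = 1$). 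Applying Theorem~\ref{thm:vertical-lowerbound} with this $q'$ then delivers a unique almost sure vertical limit for $u$, and since the proof of that theorem identifies the vertical limit with the constant $c$ from its condition~(1), it coincides with the almost sure radial limit.

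The argument is essentially bookkeeping atop the preceding theorems; the only genuinely substantive step is upgrading the non-strict inequality $qd - d + 1 \leq p$ to a strict one for an exponent in which $w$ still lies, and this is precisely what the self-improvement of Muckenhoupt weights provides.
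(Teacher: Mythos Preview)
Your proof is correct, but it takes a different route from the paper's own argument. You deduce the result by combining Theorems~\ref{lisatty} and~\ref{thm:vertical-lowerbound} with the self-improvement property of Muckenhoupt weights---an approach the paper itself sketches in the paragraph immediately following Theorem~\ref{thm:vertical-lowerbound}. The paper's actual proof of Theorem~\ref{thm:pekka-mizuta} is instead direct: it uses the $\A_q$-condition to obtain a uniform bound $\int_{Q(T)} w^{-1/(q-1)}\,dx\leq M$, then invokes H\"older's inequality to pass from the weighted $L^p$-bound on $|\nabla u|$ to an unweighted $L^{p'}$-bound for a carefully chosen $p'<p$, and finally applies the unweighted Hausdorff-content estimate of Lemma~\ref{lem:content-average} to control the exceptional sets. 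Your route is shorter and more modular, exploiting the machinery already assembled; the paper's direct proof, on the other hand, avoids invoking self-improvement and makes the role of the threshold $q\le (p+d-1)/d$ explicit through concrete parameter choices. One small cosmetic point: once self-improvement gives $w\in\A_{q-\epsilon}$ with $q-\epsilon\ge 1$, you may simply take $q'=q-\epsilon$; the $\max\{1,q-\epsilon/2\}$ construction is unnecessary.
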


The range of exponents $q$ in the statement is sharp in the class of \emph{all weights}, but not necessarily for a given weight or subclass of weights; see Example \ref{ex:mainexample}.

\begin{remark}\label{remark1.14}  The assumptions of the previous theorems are related to each other by the following  implications for $w\in\mathcal A_p$. Firstly, when $p<d$
\[   \inf_{\ell(Q)=1} w(Q)>0 \Longrightarrow \mathcal R_p(w) < \infty.  \]
The proof of this is presented in the second paragraph of the proof of Theorem \ref{lisatty}. This implication can not be turned into an equivalence.

Secondly, condition (1) in Theorem \ref{thm:vertical-lowerbound} is equivalent to the condition that $$\sup_{t>0} \Rp(w_t) < \infty,$$
where $w_t:\R^d \to [0,\infty]$ is a translated weight defined by $w_t(\overline x,s)=w(\overline x,s-t)$. This claim follows from Lemmas \ref{lem:av-est} and \ref{Syl}. 

\end{remark}

In the final part of this introduction, we discuss sharp results for weights with a special structure.

\subsection{Special classes of weights}

First, we present a theorem for radial weights.

\begin{theorem}\label{thm:radial-weights} Let $w(x)=v(|x|)$ be a radial weight with $w \in \A_p(\R^d)$. Then the following two conditions are equivalent:

\begin{enumerate}
\item $\inf_{r>0} \int_{r}^{r+1} v(s) ds > 0.$
\item Every $u \in \dot W^{1,p}(\mathbb R^d,w)$ has a unique almost sure  vertical limit.
\end{enumerate}
\end{theorem}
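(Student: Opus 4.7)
The plan is to prove the two implications separately, operating in the range $1 \leq p < d$ to which the vertical-limits theory of Theorems \ref{thm:cubed-average} and \ref{lisatty} applies. The direction (2) $\Rightarrow$ (1) is a quick consequence of earlier results; the direction (1) $\Rightarrow$ (2) contains the substantive work.

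For (2) $\Rightarrow$ (1), I would invoke Theorem \ref{thm:cubed-average} to obtain $\inf_{z \in \mathbb N} w(Q \times [z, z+1]) > 0$ for every unit cube $Q \subset \R^{d-1}$. Specializing to $Q = [-1/2, 1/2]^{d-1}$ and applying Fubini,
\[
w(Q \times [z, z+1]) = \int_Q \int_z^{z+1} v\bigl(\sqrt{|\bar x|^2 + t^2}\bigr)\, dt\, d\bar x.
\]
Since $\sqrt{|\bar x|^2 + t^2}$ lies in an interval of length $1 + O(1/z)$ near $[z, z+1]$, the doubling of $w$ shows this quantity is comparable to $\int_z^{z+1} v(s)\, ds$ uniformly in $z$. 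Positivity for small $r$ then follows from the local positivity of $v$, which is automatic for $w \in \A_p$.

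For (1) $\Rightarrow$ (2), the first step is to verify two weaker consequences of (1). Subdividing $[2^i, 2^{i+1}]$ into $2^i$ unit intervals, (1) yields $\int_{2^i}^{2^{i+1}} v(s)\, ds \geq c\cdot 2^i$, and since $s \geq 2^i$ on this interval, $w(A_i) \geq c'\cdot 2^{id}$. Hence $\mathcal R_p(w) \leq C \sum_i 2^{i(p-d)/(p-1)} < \infty$ (with the analogous bound for $\mathcal R_1(w)$ when $p=1$). A parallel angular-wedge argument shows $\inf_{\ell(Q)=1} w(Q) > 0$ for unit cubes in $\R^d$: a unit cube at distance $\rho$ from the origin occupies solid angle $\sim \rho^{-(d-1)}$ of an annular shell whose weighted mass is $\sim \rho^{d-1}\int_\rho^{\rho+\sqrt d} v(s)\, ds$. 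Consequently, by Theorems \ref{remark-c} and \ref{lisatty}, every $u \in \dot W^{1,p}(\R^d,w)$ has a unique almost sure radial limit $c$, and $\avint_{Q_i} u\, d\mu \to c$ along any sequence of unit cubes $Q_i \to \infty$.

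The main obstacle is upgrading these cube-average limits to pointwise vertical limits. The plan is to exploit the rotational invariance of $w$: for any rotation $T \in \mathrm{SO}(d-1)$, acting on $\R^d$ by $(\bar x, t) \mapsto (T\bar x, t)$, one has $w \circ T = w$. For a.e. $\bar x$, the slice $u(\bar x, \cdot)$ is absolutely continuous on $\R$ by the ACL property (Lemma \ref{lem:acclines}). I would bound $|u(\bar x, z) - u_{Q\times [z,z+1]}|$ using a weighted $(1,p)$-Poincar\'e inequality on $Q \times [z, z+1]$, with $Q$ a unit cube containing $\bar x$, and then average this bound over the rotational orbit $\{T\bar x : T \in \mathrm{SO}(d-1)\}$ so that the right-hand side depends only on the radial profile of $w$. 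Together with the $L^p(w)$-tail integrability of $|\nabla u|$, this yields $u(\bar x, z) \to c$ along integers $z$, and the oscillation on $[z, z+1]$ is handled by ACL. The hardest part will be carrying out the rotational averaging so that the Poincar\'e estimate becomes a quantity controlled by $\int_\rho^{\rho+1} v(s)\, ds$, possibly requiring a dichotomy between the regimes $|\bar x| \ll z$ and $|\bar x| \sim z$.
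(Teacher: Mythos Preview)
Your (2) $\Rightarrow$ (1) direction is fine and essentially equivalent to the paper's contrapositive argument via Example \ref{example:rough}.

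Your (1) $\Rightarrow$ (2) plan has a genuine gap at the ``main obstacle'' step. A weighted $(1,p)$-Poincar\'e inequality on $Q\times[z,z+1]$ controls the \emph{integral} of $|u-u_{Q\times[z,z+1]}|$, not the pointwise value $|u(\bar x,z)-u_{Q\times[z,z+1]}|$ at a specific $\bar x$. Averaging over the rotational orbit $\{T\bar x:T\in\mathrm{SO}(d-1)\}$ does not rescue this: the weight $w$ is rotationally invariant, but the function $u$ is not, so averaging the estimate over $T$ produces a bound on $\int_{\mathrm{SO}(d-1)}|u(T\bar x,z)-c|\,dT$, which says nothing about $u(\bar x,z)$ for your fixed $\bar x$. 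The ``radial profile of $w$'' simplification you are hoping for applies only to the weight side of the inequality, not to the gradient integral $\int|\nabla u|^p w$, which remains tied to the particular $u$.

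The paper's route is quite different and follows the product-weight argument of Theorem \ref{thm:pekka-mizuta-2nd}. The key ingredients you are missing are: (i) the Hardy--Littlewood maximal function $\mathbf M|\nabla u|\in L^p(\R^d,w)$ by the $\A_p$ property, together with the pointwise Poincar\'e inequality \eqref{PI-pp}; (ii) the characterization from \cite{DMOSradial} that $v_2(t)=v(t^{1/d})\in\A_p(\R)$, which after a change of variables gives a one-dimensional $\A_p$-type inequality \emph{along each vertical line} $t\mapsto w(\bar x,t)$; and (iii) a two-point selection argument: one finds $y_n\in Q_n$ with both $|u(y_n)-c|$ and $\mathbf M|\nabla u|(y_n)$ small, and $t_{n,\bar x}\in[n,n+1]$ with $\mathbf M|\nabla u|(\bar x,t_{n,\bar x})$ small (via H\"older using (ii)), then connects $u(\bar x,t)\to u(\bar x,t_{n,\bar x})\to u(y_n)\to c$ using ACL and \eqref{PI-pp}. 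The radial structure enters not through rotational averaging but through the one-variable $\A_p$ control along vertical fibres.
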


Radial Muckenhoupt weights have been characterized in \cite{DMOSradial}. Indeed, a radial weight $w$ belongs to $\A_p(\R^d)$ if and only if $v_0(t)=v(t^{1/n})$ belongs to $\A_p(\R)$, where $w(x)=v(|x|)$ for each $x\in \R^d$.

Finally, we give a sharp result for those weights $w$ that have product structure: $w(\overline x,t)=w_1(\overline x)w_2(t)$. To state the theorem, we again need a translation invariant form of $\Rp(w).$ 

\begin{theorem}\label{thm:pekka-mizuta-2nd} Suppose that $1<p<d$. Let 
$w_1 \in \mathcal A_p(\R^{d-1}), w_2\in \mathcal A_p(\R)$ and $w(\overline{x},y)=w_1(\overline{x})w_2(y)$. Then the following two conditions are equivalent:

\begin{enumerate}
\item $\sup_{t>0}\mathcal R_p(w_t)<\infty$.
\item Every $u\in \dot W^{1,p}(\R^d, w)$ has a unique almost sure  vertical limit.
\end{enumerate}
\end{theorem}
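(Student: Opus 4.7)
The plan is to argue both directions through Remark \ref{remark1.14}, which identifies $\sup_{t>0}\mathcal R_p(w_t)<\infty$ with the statement that, for every $u\in\dot W^{1,p}(\mathbb R^d,w)$ and every unit cube $Q\subset\mathbb R^{d-1}$, the averages $u_{Q\times[z,z+1]}$ converge as $z\to\infty$ to a common constant $c$. Throughout I will use that, under this equivalent condition, the product structure $w=w_1w_2$ forces $\inf_{z\in\mathbb N}\int_z^{z+1}w_2(y)\,dy>0$, which via Theorem \ref{thm:cubed-average} gives $\inf_z w(Q\times[z,z+1])>0$ for every fixed $Q\subset\mathbb R^{d-1}$.

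For (2)$\Rightarrow$(1), I would deduce $u_{Q\times[z,z+1]}\to c$ from the pointwise a.e. vertical convergence $u(\bar x,y)\to c$ by an Egorov-type argument on $Q$: split $Q$ into a set of near-uniform vertical convergence and a small remainder whose contribution is absorbed using H\"older and the weighted $(1,p)$-Poincar\'e inequality on $R_z:=Q\times[z,z+1]$ together with the tail $\|\nabla u\|_{L^p(R_z,w)}\to 0$. Invoking Remark \ref{remark1.14} then yields $\sup_{t>0}\mathcal R_p(w_t)<\infty$.

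For the harder direction (1)$\Rightarrow$(2), Remark \ref{remark1.14} supplies $u_{R_z}\to c$ for every unit cube $Q$ with a common $c$. The weighted $(p,p)$-Poincar\'e inequality on the unit cubes $R_z$ gives $\int_{R_z}|u-u_{R_z}|^p\,dw\le C\int_{R_z}|\nabla u|^p\,dw$ with $C$ depending only on $[w]_{\mathcal A_p}$, and summing in $z\in\mathbb N$ yields
\[
\sum_{z\in\mathbb N}\int_{R_z}|u-u_{R_z}|^p\,dw\le C\|\nabla u\|_{L^p(\mathbb R^d,w)}^p<\infty.
\]
Invoking the product structure and Fubini, for a.e. $\bar x\in Q$ the quantities $h_z(\bar x):=\int_z^{z+1}|u(\bar x,y)-u_{R_z}|^p w_2(y)\,dy$ are summable in $z$ and in particular tend to zero through \emph{every} integer $z$. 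Combined with $\inf_z\int_z^{z+1}w_2>0$ and Chebyshev, this produces points $y_z\in[z,z+1]$ with $|u(\bar x,y_z)-u_{R_z}|\to 0$, and hence $|u(\bar x,y_z)-c|\to 0$.

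To propagate the convergence from $y_z$ to all $y\in[z,z+1]$, I invoke the ACL property of $u(\bar x,\cdot)$ for a.e. $\bar x$ together with the uniform one-dimensional Muckenhoupt bound $\int_z^{z+1}w_2^{-1/(p-1)}\,dy\le C\bigl(\int_z^{z+1}w_2\,dy\bigr)^{-1}$. H\"older then gives
\[
|u(\bar x,y)-u(\bar x,y_z)|\le\Bigl(\int_z^{z+1}|\partial_y u|^p w_2\,dy\Bigr)^{1/p}\Bigl(\int_z^{z+1}w_2^{-1/(p-1)}\,dy\Bigr)^{1/p'}\longrightarrow 0
\]
uniformly in $y\in[z,z+1]$, since the $L^p(w_2)$-tails of $\partial_y u(\bar x,\cdot)$ vanish for a.e. $\bar x$ by Fubini applied to $|\nabla u|^p w\in L^1$. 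This yields the pointwise vertical limit $u(\bar x,y)\to c$ for a.e. $\bar x$. The main obstacle is precisely this upgrade from averaged $L^p(w)$-convergence on the $R_z$'s to pointwise almost sure vertical convergence: the product structure is essential because it allows Fubini to decouple horizontal and vertical variables, while the 1D $\mathcal A_p$-Muckenhoupt bound on unit intervals substitutes for the global integrability $\int^\infty w_2^{-1/(p-1)}\,dy<\infty$, which can fail, as in the classical Fefferman setting $w_2\equiv 1$.
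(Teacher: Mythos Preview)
Your proposal is correct and in fact gives a more elementary proof of the main implication $(1)\Rightarrow(2)$ than the paper's. Here is the comparison.

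For $(2)\Rightarrow(1)$, the paper simply invokes Lemma~\ref{Syl}: if $\sup_{t>0}\mathcal R_p(w_t)=\infty$ one constructs a function without vertical limits. Your Egorov argument showing $u_{Q\times[z,z+1]}\to c$ and then quoting Remark~\ref{remark1.14} is a legitimate but more circuitous route; note that Remark~\ref{remark1.14} itself relies on Lemma~\ref{Syl}, so you could shorten this direction by citing that lemma directly.

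For $(1)\Rightarrow(2)$, the approaches genuinely diverge. The paper works with the Hardy--Littlewood maximal function: after establishing $u_{Q_n}\to c$ via Lemma~\ref{lem:av-est}, it uses the $L^p(w)$-boundedness of $\mathbf M$ to find, for each large $n$, a point $y_n\in Q_n$ with small $\mathbf M|\nabla u|(y_n)$ and $|u(y_n)-c|$ small, and a point $(\bar x,t_{n,\bar x})$ on the fixed vertical fiber with small $\mathbf M|\nabla u|(\bar x,t_{n,\bar x})$. These two points are then connected by the pointwise Poincar\'e inequality~\eqref{PI-pp}, and finally the ACL property bridges $(\bar x,t_{n,\bar x})$ to an arbitrary $(\bar x,t)$ in the same interval. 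Your argument avoids the maximal function and the cross-fiber jump entirely: the summed $(p,p)$-Poincar\'e inequality followed by Fubini (this is where the product structure is exploited) produces, for a.e.\ $\bar x$, a good point $y_z$ \emph{on the same vertical fiber} with $|u(\bar x,y_z)-u_{R_z}|\to 0$, and then a single application of H\"older with the one-dimensional $\mathcal A_p$-bound on $w_2$ propagates along the fiber. Your route is shorter and uses only the $(p,p)$-Poincar\'e inequality (a classical consequence of $w\in\mathcal A_p$) rather than the pointwise maximal-function inequality. The paper's approach, on the other hand, is more robust and adapts with almost no change to the radial-weight setting of Theorem~\ref{thm:radial-weights}, where there is no product decomposition to feed into Fubini.
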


\begin{remark} We note that, under these assumptions, $w(\overline{x},y) \in \A_p(\R^d)$. Indeed, this follows directly from the definition \eqref{eq:muckenhoupt-p-1} and \eqref{eq:muckenhoupt} using Fubini's theorem.
\end{remark}

\begin{remark} \label{rmk:minfty} In the case of vertical limits, for simplicity, we chose in Definition \ref{vertikaali} to only consider limits when $t\to \infty$. We could also consider the \emph{stronger} property, that the limit exists also as $t\to -\infty$, and that the value is (almost surely) the same. We could call this the bi-infinite unique almost sure vertical limit. Our theorems apply to this definition with few edits. Theorems \ref{thm:pekka-mizuta} and  \ref{thm:radial-weights} are symmetric with respect to the transformation $t\to -t$. In these theorems, the conditions about a vertical limit could be replaced with a bi-infinite unique almost sure vertical limit. Note that the value of the vertical limit $c$ coincides with the unique almost sure radial limit.

Other theorems are not quite symmetric with respect to the reflection of the $t$-axis, but they are easily modified to be such. These are Theorems \ref{thm:cubed-average}, \ref{thm:vertical-lowerbound} and \ref{thm:pekka-mizuta-2nd}. To obtain versions of them with bi-infinite limits replacing vertical limits, we need to perform the following simple modifications. In the first two, we replace $\N$ by $\Z$. In  the final statement, we substitute $\sup_{t\in \R} \Rp(w_t)$ for the supremum over $t>0$. The modified statements can be reduced to the original ones by using symmetry. As an example, consider Theorem \ref{thm:pekka-mizuta-2nd}. If $\sup_{t\in \R} \Rp(w_t)<\infty$, then both $\sup_{t>0} \Rp(w_t)<\infty$ and $\sup_{t<0} \Rp(w_t) < \infty$, and one can apply the original statement to conclude the existence of a limit when $t\to\infty$ and when $t\to -\infty$, which both coincide with the radial limit. For the converse direction, note that if $\sup_{t\in \R} \Rp(w_t)=\infty$, then either $\sup_{t>0} \Rp(w_t)=\infty$ or $\sup_{t<0} \Rp(w_t) = \infty$. In this case, there exists a function $u$ without a limit when $t\to\infty$ or when $t\to-\infty$.

\end{remark}

The organization of this paper is as follows. In Section \ref{sec:notation}, we recall notions of Sobolev spaces and their properties. In Section \ref{sec:radial-limit}, we discuss the case of radial limits and give proofs for Theorem \ref{thm:single limit}, Proposition \ref{thm:identify-c} and Theorem \ref{remark-c}. In Section \ref{sec:examples}, we give counter-examples. In Section \ref{sec:vertical-limit}, we discuss the case of vertical limits and give proofs for Theorems \ref{thm:cubed-average}-\ref{lisatty}-\ref{thm:vertical-lowerbound}-\ref{thm:pekka-mizuta}-\ref{thm:radial-weights}-\ref{thm:pekka-mizuta-2nd}.

\vspace{.5cm}
\noindent \textbf{Acknowledgments:}  The first author was supported by the Academy of Finland grant \# 345005. The second author and third author were supported by the Academy of Finland grant \# 323960. The first author thanks the Mathematics department at University of Jyv\"askyl\"a for a wonderful stay during Fall 2020 and early spring 2021, during which this research was started.\\ \\

\section{Notation and Preliminaries}\label{sec:notation}

\subsection{Metric and measure notions}

Throughout this paper, we employ the following conventions. The notation $A\lesssim B(A\gtrsim B)$ means that there is a constant $C$ only depending on the data such that $A\leq C\cdot B\ (A\geq C\cdot B)$, and $A\approx B$ means that both $A\lesssim B$ and $A\gtrsim B$. Where necessary, we write $A\lesssim_{a,b,c,\dots}B$, when a bound for $C$ depends on $a,b,c,\dots$.

We will consider only the $d$-dimensional Euclidean space $\R^d$, where $d\geq 2$, equipped with Euclidean distance and (absolutely continuous) measures $\mu$ given by $d\mu=wdx$ where $w\in L^1_{\loc}(\R^d)$
is non-negative. Such a $w$ will be called a weight. The norm of a vector $v\in \R^d$ is denoted by $|v|$. Points in $\R^d$ will either be denoted by $x\in \R^d$, or $r\xi\in\mathbb R^d$ where $r\in[0,\infty)$ and $\xi\in\mathbb S^{d-1}$, or $(\overline{x},t) \in \R^d$ where $\overline{x}\in \R^{d-1}$ and $t\in \R$. Under this notation, the direction corresponding to the last coordinate is called vertical.

The usual Lebesgue spaces with respect to the weight $w$ are denoted by $L^p(\mathbb R^d,w)$, for $p\in [1,\infty]$. We denote by $L^p_{\loc}(\R^d,w)$ and $L^p_{\loc}(\R^d)$ the spaces of locally $p$-integrable functions. Open balls with center $x_0$ and radius $r$ will be denoted $B(x_0,r)$. Given a set $A\subset \R^d$ we denote its Lebesgue measure and its weighted measure by $|A|$ (where from context it is evident that $A$ is not a vector in an Euclidean space) and $w(A),$ respectively. Further, when $|A|>0$ and $w(A)>0$, we denote
$$ \avint_A f dx := \frac{1}{|A|}\int_A f dx \text{\rm \ \ and \ \ } f_A:=\avint_{A}fd\mu:=\frac{1}{w(A)}\int_Afwdx$$
whenever the integral on the right-hand side is defined. Note that $f_A$ will only denote an average with respect to the weight $w$.

We will consider exponents $p\in [1,\infty)$ and we assume $w\in \A_p(\R^d)$, where $\A_p(\R^d)$ is the class of Muckenhoupt weights. Recall that, given $p \in (1,\infty),$ a weight $w$ belongs to $\A_p(\R^d)$ if $w>0$ a.e. and  if there is a constant $C \geq 1$ so that for every ball $B=B(x_0,r)$,

\begin{equation}\label{eq:muckenhoupt}
\left(  \avint_B w ~ dx  \right) \left( \avint_B w^{-\frac{1}{p-1}}dx\right)^{p-1} \leq C.
\end{equation}

If $p=1$, we write $w\in \A_1(\R^d)$ if $w>0$ a.e. and if there is a constant $C\geq 1$ so that for every ball $B=B(x_0,r)$ and a.e. $y\in B$

\begin{equation}\label{eq:muckenhoupt-p-1}
 \avint_B w~ dx \leq C w(y).
\end{equation}
The optimal constants $C$ in the statements are called  the $\A_p$-constants of the weights $w$. The above conditions will be referred to as the $\A_p$-conditions for the weights $w$.

We say that $w$ is doubling if there is a constant $c_D\geq 1$ such that  
\begin{equation}\label{doubling-1811}
w(B(x,2r))\leq c_Dw(B(x,r))
\end{equation}
for all balls $B(x,r)$. A weight $w$ supports a $p$-Poincar\'e inequality if there is a constant $c_P>0$ such that 
\begin{equation}\label{PI-1811}
\avint_{B(x,r)}|u-u_{B(x,r)}|d\mu\leq c_P r\left(\avint_{B(x,r)}|\nabla u|^pd\mu\right)^{\frac{1}{p}}
\end{equation}
for all balls $B(x,r)$ and for all locally integrable functions $u$ with locally integrable distributional derivative $\nabla u$. 

By Section 15 in \cite{HKM06}, we have the following theorem.
\begin{theorem}\label{thm:muckenhouppi}
Let $1\leq p<\infty$. If $w\in\mathcal A_p(\mathbb R^d),$ then $w$ is doubling  and supports a $p$-Poincar\'e inequality, namely \eqref{doubling-1811} and \eqref{PI-1811} hold for $w$ with constants that only depend on $p,d$ and the
$\A_p$-constant of $w.$

\end{theorem}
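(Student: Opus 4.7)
The plan is to derive both conclusions directly from the $\mathcal A_p$-bound \eqref{eq:muckenhoupt}--\eqref{eq:muckenhoupt-p-1}, following the classical approach of Fabes-Kenig-Serapioni.

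First I would obtain doubling from the sharper \emph{strong doubling} inequality: for every ball $B$ and every measurable $E\subset B$,
\[
\left(\frac{|E|}{|B|}\right)^p \leq c\,\frac{w(E)}{w(B)},
\]
with $c$ depending only on $p$, $d$ and the $\mathcal A_p$-constant. When $p>1$, this follows by writing $|E|=\int_E w^{1/p}\cdot w^{-1/p}\,dx$, applying Hölder with exponents $p$ and $p/(p-1)$ to get $|E|\leq w(E)^{1/p}\bigl(\int_E w^{-1/(p-1)}\bigr)^{(p-1)/p}$, enlarging the second integral to $B$, and using \eqref{eq:muckenhoupt} to convert $\bigl(\avint_B w^{-1/(p-1)}\,dx\bigr)^{(p-1)/p}$ into $(w(B)/|B|)^{-1/p}$ up to a constant. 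Applied with $E=B(x,r)$ inside $2B=B(x,2r)$, this yields $w(2B)\leq c\,2^{dp}\,w(B)$, which is \eqref{doubling-1811}. The case $p=1$ is immediate from \eqref{eq:muckenhoupt-p-1}, since it gives $\avint_{2B} w \lesssim \mathrm{ess\,inf}_{2B}\, w$ and hence $w(2B)\lesssim 2^d w(B)$.

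For the $p$-Poincaré inequality, I would combine the classical unweighted pointwise representation
\[
|u(x) - u_{B,0}| \leq c_d \int_B \frac{|\nabla u(y)|}{|x-y|^{d-1}}\,dy,\qquad x\in B,
\]
(where $u_{B,0}=\avint_B u\,dy$ is the unweighted average and $u$ is smooth), with the $L^p(w)$-boundedness of the Hardy-Littlewood maximal operator, which is equivalent to $w\in\mathcal A_p$. One first replaces $u_{B,0}$ by the weighted average $u_B=\avint_B u\,d\mu$; the discrepancy is controlled by the same Riesz-potential expression averaged over $B$, via strong doubling. Then, integrating the representation against $w\,dx$ and dividing by $w(B)$, the core estimate is the truncated weighted Riesz-potential bound
\[
\frac{1}{w(B)}\int_B\int_B \frac{|\nabla u(y)|}{|x-y|^{d-1}}\,dy\,w(x)\,dx \lesssim r\left(\frac{1}{w(B)}\int_B |\nabla u|^p\,d\mu\right)^{1/p}.
\]
This is obtained by dyadically decomposing the inner integral into annular shells $\{2^{-k-1}r\leq |x-y|<2^{-k}r\}$ around $x$, estimating each shell by $r\cdot 2^{-k}$ times the maximal function of $|\nabla u|\chi_B$ at $x$, summing in $k$, and invoking the weighted maximal inequality together with strong doubling. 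Finally, the inequality for general $u$ with locally integrable distributional gradient is obtained by mollification.

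The hard part will be the second step: doubling is a short algebraic Hölder computation, but the $p$-Poincaré inequality genuinely exploits the $\mathcal A_p$ condition through the weighted maximal function theorem. Since the full argument is standard and documented in detail in \cite[Section 15]{HKM06}, I would present the strong-doubling identity explicitly and then invoke the weighted maximal-function theorem as a black box when summing the dyadic annular estimates for the Riesz potential.
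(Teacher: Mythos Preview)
The paper does not actually prove this theorem: it is stated with the one-line justification ``By Section 15 in \cite{HKM06}, we have the following theorem'' and no further argument. Your outline is essentially the Fabes--Kenig--Serapioni proof that \cite{HKM06} records, so there is no discrepancy in approach---you are simply supplying what the paper deliberately omits.

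One small point to flag: your Poincar\'e argument invokes the \emph{strong-type} $L^p(w)$-boundedness of the Hardy--Littlewood maximal operator, which is available only for $p>1$. For $p=1$ the maximal function is merely weak-type, so the step ``$\avint_B \mathbf{M}(|\nabla u|\chi_B)\,d\mu \le (\avint_B \mathbf{M}(|\nabla u|\chi_B)^p\,d\mu)^{1/p}\lesssim (\avint_B|\nabla u|^p\,d\mu)^{1/p}$'' is unavailable. The standard fix is to swap the order of integration in the double Riesz integral via Fubini and use the $\mathcal A_1$ condition directly: for a.e.\ $y\in B$ one has $w(B(y,s))\lesssim s^d w(y)$, whence $\int_B w(x)|x-y|^{1-d}\,dx\lesssim r\,w(y)$, and the $1$-Poincar\'e inequality follows immediately. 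With this adjustment your sketch is complete.
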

We refer interested readers to \cite{MR1336257,MR1379269,MR4044738,MR1816566,MR2180887,Stein}  for discussions on Muckenhoupt weights.

A curve $\gamma:I \to \R^d$ is a continuous mapping from an interval $I \subset \R$, where $I$ can be open, closed or infinite. A curve $\gamma:I\to \R^d$ is said to be an infinite curve, if $I=[0,\infty)$ and its length $\int_\gamma ds$ is infinite. A curve $\gamma$ is said to be locally rectifiable, if for every compact subset $J \subset I$ the curve $\gamma|_J$ is rectifiable.

\subsection{Sobolev spaces}\label{subsec:sobolev} Let $1\leq p<\infty$.
  We denote by $W^{1,p}_{\loc}(\R^d,w)$ the space of \emph{Lebesgue representatives} of functions $u\in L^{p}_{\loc}(\R^d,w)$ with distributional derivative $\nabla u \in L^p_{\loc}(\R^d,w)$. If $w=1$ we drop it from the notation. We remark that the Lebesgue representatives exist by the following simple lemma.
  
  \begin{lemma} \label{lem:sobolevinclusion} Suppose $p\in [1,\infty)$. If $u \in L^p_{\loc}(\R^d,w)$ with $|\nabla u|\in L^p_{\loc}(\R^d,w)$ and $w\in \A_p(\mathbb R^d)$, then $u\in L^1_{\loc}(\R^d)$ and $|\nabla u|\in L^1_{\loc}(\R^d)$. In particular, $u\in W^{1,1}_{\loc}(\R^d)$.
  \end{lemma}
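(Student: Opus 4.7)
The plan is to prove the two containments $u\in L^1_{\loc}(\R^d)$ and $|\nabla u|\in L^1_{\loc}(\R^d)$ separately, by the same scheme: on each bounded set $K$, decompose the integrand $|f| = |f|\,w^{1/p}\cdot w^{-1/p}$ and apply Hölder's inequality, using the local integrability of $w^{-1/(p-1)}$ (respectively local boundedness of $w^{-1}$) that is furnished by the $\A_p$ condition. The inclusion $u\in W^{1,1}_{\loc}(\R^d)$ then follows automatically, since the distributional gradient of $u$ is assumed to exist as an $L^p_{\loc}(\R^d,w)$ vector field and has just been shown to lie in $L^1_{\loc}(\R^d)$.

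First I would treat the case $p>1$. Fix a bounded set $K\subset \R^d$ and apply Hölder's inequality with exponents $p$ and $p/(p-1)$:
\[
\int_K |f|\,dx \;=\; \int_K |f|\,w^{1/p}\cdot w^{-1/p}\,dx \;\le\; \left(\int_K |f|^p w\,dx\right)^{1/p}\!\left(\int_K w^{-\tfrac{1}{p-1}}\,dx\right)^{(p-1)/p}.
\]
Applying this with $f=u$ and $f=|\nabla u|$, the first factor is finite by the standing assumptions $u\in L^p_{\loc}(\R^d,w)$ and $|\nabla u|\in L^p_{\loc}(\R^d,w)$. The second factor is finite because the $\A_p$ condition \eqref{eq:muckenhoupt} on any ball $B\supset K$ combined with $w\in L^1_{\loc}(\R^d)$ forces $w^{-1/(p-1)}\in L^1_{\loc}(\R^d)$, as pointed out in the introduction.

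Next I would handle the endpoint case $p=1$. The $\A_1$ condition \eqref{eq:muckenhoupt-p-1} gives $w(y)\ge C^{-1}\avint_B w\,dx$ for a.e.\ $y$ in each ball $B$; since $w\in L^1_{\loc}$ and $w>0$ a.e., this forces $w^{-1}\in L^\infty_{\loc}(\R^d)$. For a bounded set $K$ contained in a ball $B$, we then have
\[
\int_K |f|\,dx \;=\; \int_K |f|\,w\cdot w^{-1}\,dx \;\le\; \|w^{-1}\|_{L^\infty(B)} \int_K |f|\,w\,dx,
\]
applied once with $f=u$ and once with $f=|\nabla u|$. Both right-hand sides are finite by hypothesis.

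Combining the two cases yields $u,|\nabla u|\in L^1_{\loc}(\R^d)$. Since the distributional gradient of $u$ in the weighted setting coincides with its distributional gradient as an element of $L^1_{\loc}(\R^d)$ (it is defined via pairing with $C_c^\infty$ test functions, which is insensitive to the weight), it follows that $u\in W^{1,1}_{\loc}(\R^d)$, completing the proof. There is no real obstacle here; the only subtlety worth flagging is the distinction between $p=1$ and $p>1$, which just requires reading off the correct consequence of the $\A_p$ condition in each case.
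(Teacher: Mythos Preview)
Your proof is correct and follows essentially the same approach as the paper: split into the cases $p>1$ and $p=1$, apply H\"older's inequality to the factorization $|f|=|f|w^{1/p}\cdot w^{-1/p}$, and use the $\A_p$ condition to control the dual-weight factor. The only cosmetic difference is that the paper works directly on balls and substitutes the Muckenhoupt inequality \eqref{eq:muckenhoupt} to rewrite the bound as $C^{1/p}\bigl(\int_B |g|^p w\,dx\bigr)^{1/p}\bigl(\int_B w\,dx\bigr)^{-1/p}|B|$, whereas you simply note that $w^{-1/(p-1)}\in L^1_{\loc}$ and stop there.
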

  
  \begin{proof} Consider the case $p>1$. For any $g\in L^p_{\loc}(\R^d,w)$ and any ball $B=B(x,r)\subset \R^d$ we have from the definition in \eqref{eq:muckenhoupt} and H\"older's inequality that
  
 \begin{align*}
 \int_B |g| dx &\leq \left(\int_B |g|^p w dx \right)^{\frac{1}{p}} \left(\int_{B} w^{-\frac{1}{p-1}} dx \right)^{\frac{p-1}{p}} 
 \leq C^{\frac{1}{p}} \left(\int_B |g|^p w dx \right)^{\frac{1}{p}} \left(\int_{B} w dx \right)^{\frac{-1}{p}}|B|.
 \end{align*}
 
   Applying this to all balls, and $g=u$ and $g=|\nabla u|$ gives the claim for $p>1$. For $p=1$, the estimate follows similarly from the definition by using \eqref{eq:muckenhoupt-p-1}.
  \end{proof}

Let $\Wdp(\R^d,w)$ denote the space of (Lebesgue representatives of) $u\in W^{1,p}_{\loc}(\R^d,w)$ with $|\nabla u| \in L^p(\mathbb R^d,w)$.  The reason for choosing the Lebesgue representative is that then our function is \emph{absolutely continuous on almost every line}. While this fact is not novel, in the literature it only follows rather indirectly. Towards establishing absolute continuity in our setting, we introduce some further concepts.

The Hausdorff $(s,R)$-content of $E\subset\mathbb R^d$ is defined by 
\[
\mathcal H^s_R(E)=\inf\left\{ \sum_{i\in\mathbb N} r_i^s: E\subset \bigcup_{i\in\mathbb N}B_i \text{\rm \ and \ } r_i\leq R\right\}
\]
where $B_i$ are balls with radius $r_i$. The Hausdorff $s$-measure of $E\subset \mathbb R^d$ is $\mathcal H^s(E):=\lim_{r\to0}\mathcal H^{s}_r(E)$.

For $u \in L^1_{\loc}(\R^d)$ we define the set of \emph{non-Lebesgue points}
\[
NL_u = \{x \in \R^d : \lim_{r\to \infty} \frac{1}{|B(x,r)|}\int_{B(x,r)} u(y) dy  \text{ does not exist }\}.
\]
For $u\in \Wdp(\R^d,w)$ the set $NL_u$ of non-Lebesgue points is quite small. Indeed, one has the following result.

\begin{lemma}\label{lem:hausdorffcap}Suppose $1\le p<\infty$ and $w\in \A_p(\mathbb R^d)$. Let $u\in \Wdp(\R^d,w)$, and let $NL_u$ be the set of non-Lebesgue points of $u.$ Then $\mathcal{H}^{d-1}(NL_u)=0$.
\end{lemma}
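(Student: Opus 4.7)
The plan is to reduce the lemma to the classical unweighted Lebesgue-point theorem for Sobolev functions. The only role played by $w$ is through Lemma \ref{lem:sobolevinclusion}: the Muckenhoupt assumption $w\in \A_p$ upgrades $u\in \Wdp(\R^d,w)$ to an element of $W^{1,1}_{\loc}(\R^d)$, so that both $u$ and $|\nabla u|$ are locally Lebesgue-integrable. After this step the weight disappears from the argument entirely.

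With $u \in W^{1,1}_{\loc}(\R^d)$ in hand, I would invoke the classical result that $\mathcal H^{d-1}$-almost every point of $\R^d$ is a Lebesgue point of an unweighted $W^{1,1}_{\loc}$ function (see e.g.\ Federer, Theorem 4.5.9, Evans--Gariepy, Theorem 4.8.1, or Ziemer, Theorem 3.10.2). Since $NL_u$ is by definition the complement of the Lebesgue set of $u$, this immediately yields $\mathcal H^{d-1}(NL_u)=0$.

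Should a self-contained derivation be preferred, one can localise via a smooth cut-off to a fixed ball $B(x_0,R_0)$ and combine the unweighted $(1,1)$-Poincar\'e inequality with dyadic telescoping to obtain, for $x\in B(x_0,R_0/2)$ and $0<r<R<R_0/2$,
\[
\left|\avint_{B(x,r)} u\, dy - \avint_{B(x,R)} u\, dy\right| \lesssim \int_{B(x,2R)} \frac{|\nabla u(y)|}{|x-y|^{d-1}}\, dy,
\]
that is, control of differences of unweighted averages by the truncated Riesz potential $I_1(|\nabla u|\chi_{B(x_0,R_0)})(x)$. The classical capacitary fact that the set $\{x : I_1 g(x) = \infty\}$ has $\mathcal H^{d-1}$-measure zero for every $g\in L^1(\R^d)$ then shows that the dyadic averages $\avint_{B(x,2^{-k})} u\, dy$ form a Cauchy sequence off an $\mathcal H^{d-1}$-null set, while doubling of Lebesgue measure promotes Cauchy-ness along dyadic radii to convergence along all radii. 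Covering $\R^d$ by countably many such balls completes the argument. The sole nontrivial ingredient is this classical $\mathcal H^{d-1}$-bound on the divergence locus of a Riesz potential; everything else is a direct consequence of Lemma \ref{lem:sobolevinclusion} and standard tools, which is why the lemma can plausibly be stated and used without a detailed proof in the body of the paper.
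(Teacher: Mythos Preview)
Your proposal is correct and matches the paper's own proof essentially verbatim: the paper applies Lemma \ref{lem:sobolevinclusion} to obtain $u\in W^{1,1}_{\loc}(\R^d)$ and then cites the classical result in Evans--Gariepy (Theorem 1 in 4.8 together with Theorem 3 in 4.5.1). Your additional self-contained sketch via Riesz potentials is a reasonable elaboration but goes beyond what the paper provides.
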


\begin{proof}
By Lemma \ref{lem:sobolevinclusion}, we have $u\in W^{1,1}_{\loc}(\R^d)$. The claim then follows from \cite[Theorem 1 in 4.8 and Theorem 3 in 4.5.1]{evansgariepy}.
\end{proof}

  \begin{lemma}\label{lem:acclines}  Let $1\le p<\infty$ and let $w\in \A_p(\mathbb R^d).$ If $ u\in \Wdp(\R^d,w)$, then for a.e. $\overline{x}\in \R^{d-1}$, we have that the function $h: t \to h(t)=u(\overline{x},t)$ is absolutely continuous and $|h'|(t)\leq |\nabla u|(\overline{x},t)$ for almost every $t \in \R$.
  
  Further, for a.e. $\xi \in \mathbb S^{d-1}$  we have that $h(t) = u(t\xi):(0,\infty) \to \R$ is absolutely continuous with $|h'|(t) \leq |\nabla u|(t\xi)$ for a.e. $t\in (0,\infty)$.
  \end{lemma}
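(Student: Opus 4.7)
The plan is to reduce the weighted statement to the classical ACL (absolute continuity on lines) characterization of $W^{1,1}_{\loc}$ functions. By Lemma \ref{lem:sobolevinclusion}, the assumption $u \in \Wdp(\R^d, w)$ with $w \in \A_p$ implies $u \in W^{1,1}_{\loc}(\R^d)$, with weak gradient bounded pointwise by $|\nabla u|$. In the unweighted $W^{1,1}_{\loc}$ setting it is classical that there is a representative that is absolutely continuous on almost every line parallel to any coordinate direction, with classical derivative equal to the corresponding weak partial almost everywhere; see, e.g., \cite{evansgariepy}. Applied in the $d$-th coordinate direction this yields the vertical absolute continuity and the pointwise inequality $|h'|(t) \leq |\nabla u|(\overline{x}, t)$ for such a representative.

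For the radial part, I would transfer the problem to polar coordinates via the bilipschitz diffeomorphism $\Phi(t, \xi) = t\xi$, mapping each product $(r, R) \times \mathbb S^{d-1}$ onto the annulus $\{r < |x| < R\}$. Then $u \circ \Phi$ lies in $W^{1,1}_{\loc}$ on the cylinder, with gradient controlled by $|\nabla u| \circ \Phi$ up to bilipschitz constants. Applying the ACL theorem in the $t$-direction gives absolute continuity of $t \mapsto u(t\xi)$ on $\mathcal{H}^{d-1}$-almost every fiber $\{\xi\} \times (r, R)$. Exhausting $\R^d \setminus \{0\}$ by a countable family of such annuli gives the claim on $(0, \infty)$. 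The derivative bound $|h'|(t) \leq |\nabla u|(t\xi)$ follows from the chain rule, as $t \mapsto t\xi$ is a unit-speed parameterization.

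The main subtlety is ensuring that the Lebesgue representative used throughout the paper, defined via $\limsup$ of ball averages in $\R^d$, is the representative that is absolutely continuous on the lines. This is where Lemma \ref{lem:hausdorffcap} is essential. It asserts $\mathcal{H}^{d-1}(NL_u) = 0$, and since the coordinate projection $\pi(\overline{x}, t) = \overline{x}$ is $1$-Lipschitz, its image $\pi(NL_u)$ has vanishing $(d-1)$-dimensional Hausdorff, hence Lebesgue, measure in $\R^{d-1}$. Consequently, for almost every $\overline{x}$, every point of the vertical line $\{\overline{x}\} \times \R$ is a Lebesgue point of $u$, so the Lebesgue representative is defined at each point of that line and there it agrees with the ACL representative. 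The radial version uses the analogous locally Lipschitz map $x \mapsto x/|x|$ on each annulus, together with the same countable exhaustion, to transfer the argument to $\mathbb S^{d-1}$. Once these exceptional sets have been identified as negligible, the absolute continuity and gradient bound for the Lebesgue representative follow from the unweighted ACL theorem.
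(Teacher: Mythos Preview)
Your proof is correct and follows essentially the same route as the paper's: reduce to $W^{1,1}_{\loc}$ via Lemma~\ref{lem:sobolevinclusion}, invoke the ACL theorem from \cite{evansgariepy}, and use Lemma~\ref{lem:hausdorffcap} to ensure the Lebesgue representative is the one that is absolutely continuous on almost every line. The paper makes the last point explicit by sketching the mollification argument (so that $u_n\to\tilde u$ pointwise at Lebesgue points while the derivatives converge in $L^1$ on almost every line), whereas you argue by projecting $NL_u$; and for the radial case the paper simply replaces Fubini by polar coordinates in that mollification sketch, while you phrase it as a bilipschitz change of variables on annuli---but these are the same idea.

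One small point worth tightening: your sentence ``there it agrees with the ACL representative'' is correct but deserves one more line of justification. The cleanest way is to note that the theorem you cite from \cite{evansgariepy} is already stated for the \emph{precise} representative $f^*(x)=\lim_{r\to0}\avint_{B(x,r)}f$, which coincides with the paper's $\limsup$-Lebesgue representative at every Lebesgue point; since by your projection argument almost every vertical (or radial) line consists entirely of Lebesgue points, the two representatives coincide on those lines. Without this observation, merely knowing that two $L^1$ representatives agree a.e.\ in $\R^d$ and that one of them is continuous on the line does not by itself force agreement at every point of the line.
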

  
  \begin{remark} For the bound $|h'|(t)\leq |\nabla u|$, we need to fix an a.e. representative of $|\nabla u|$ for the claim. However, this choice only alters the null set removed.
  \end{remark}
  
  \begin{proof}[Proof of Lemma \ref{lem:acclines}]
  Since $w\in \A_p(\mathbb R^d)$, we have by Lemma \ref{lem:sobolevinclusion} that $u\in W^{1,1}_{\loc}(\R^d)$.  Now, the claim of absolute continuity on horizontal lines follows from \cite[Theorem 2 in 4.9.2] {evansgariepy}. We summarize the proof for the readers convenience and to indicate the small modifications needed. 
  
First, take any compactly supported radially symmetric smooth and non-negative function $\psi:\R^d \to [0,\infty)$, with $\int_{\mathbb R^d} \psi dx = 1$. Consider the mollified functions defined by $u_n \defeq u \star (n^{d}\psi(xn))$, where $\star$ denotes the convolution. For any Lebesgue point $x\in \R^d$ we have $u_n(x) \to u(x)$. This holds for $\mathcal{H}^{n-1}$-a.e. $x\in \R^d$. Thus, for almost every $\overline{x}\in \R^{d-1}$, we get $u_n(\overline{x},t) \to u(\overline{x},t)$ pointwise for all $t\in \R$. Denote by $\partial_{x_d}$ the partial derivative in the $d$'th direction. Then  $\partial_{x_d} u_n = (\partial_{x_d} u)\star (n^{d}\psi(xn))$. Up to passing to a subsequence and using Fubini's theorem, we have that for almost every $\overline{x}\in \R^{d-1}$ the functions $a_n(t) = \partial_{x_d} u_n(\overline{x},t)$ converge in $L^1(\R)$ to $a(t) = \partial_{x_d} u(\overline{x},t)$. These claims together give that, for a.e. $\overline{x}\in \R^{d-1}$, the absolutely continuous functions $h_n(t)=u_n(\overline{x},t)$ converge to an absolutely continuous function $h(t)=u(\overline{x},t)$ and that $h'(t)=a(t)=\partial_{x_d} u(\overline{x},t) \leq |\nabla u|(\overline{x},t)$ for a.e. $t$.

The same proof applies for radial curves by replacing Fubini with polar coordinates, and the derivative $\partial_{x_d}$ with a radial derivative.
  
\end{proof}

\subsection{Maximal functions}
The fractional maximal function of order $\alpha\ge 0$ of a locally integrable function $f$ at $x\in\mathbb R^d$ is defined by 
\[
\mathcal M_{\alpha,R} f(x):=\sup_{0<r<R} r^\alpha\avint_{B(x,r)}|f|d\mu
\] where $R\in(0,\infty]$.
Then $\mathcal M:=\mathcal M_{0,\infty}$ is the Hardy-Littlewood maximal function. Let us recall the weak Hardy–Littlewood  inequality, see for instance {\cite[Theorem 3.5.6]{HKST07}}.

\begin{theorem}\label{thm2.7} Let $1\leq p<\infty$. Suppose that $w\in\mathcal A_p(\mathbb R^d)$. Then there is a constant $C>0$ only depending on $w$ such that for $\lambda >0$,
\[
w(\{x\in \R^d: \mathcal Mf(x)>\lambda \})\leq \frac{C}{\lambda}\int_{\R^d}|f(x)|w(x)dx
\] for all  $f\in L^1_{\rm loc}(\mathbb R^d,w)$.
\end{theorem}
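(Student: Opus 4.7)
The approach is to use the doubling property of $w$ from Theorem \ref{thm:muckenhouppi} together with a Vitali-type covering argument, which is the standard route for establishing a weak-type $(1,1)$ endpoint bound for the uncentered maximal operator on any doubling metric measure space. Since $\mathcal{M}f$ is defined with averages taken against $d\mu = w\,dx$, the statement is precisely the weak $(1,1)$ bound for the Hardy--Littlewood maximal operator on $(\mathbb{R}^d,|\cdot|,\mu)$, and no Muckenhoupt structure is needed beyond the doubling constant supplied by Theorem \ref{thm:muckenhouppi}.

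My first step would be to reduce to the case $f \in L^1(\mathbb{R}^d,w)$ and $f \geq 0$, since otherwise the right-hand side is infinite or the absolute value is trivial. Next, for $\lambda>0$ and $R<\infty$, I would work with the truncated operator $\mathcal{M}_{0,R}$ to ensure uniformly bounded radii; the full claim follows by monotone convergence as $R\to\infty$. For each $x\in E_\lambda^R:=\{\mathcal{M}_{0,R}f>\lambda\}$, the definition of the maximal function supplies a ball $B_x = B(x,r_x)$ with $r_x<R$ satisfying $\int_{B_x} f\,d\mu > \lambda\,\mu(B_x)$.

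Now I would apply the basic $5r$-covering lemma to the family $\{B_x\}_{x\in E_\lambda^R}$, whose radii are bounded by $R$, to extract a pairwise disjoint countable subfamily $\{B_{x_i}\}$ with $E_\lambda^R \subset \bigcup_i 5B_{x_i}$. The doubling inequality \eqref{doubling-1811} iterated a bounded number of times gives $\mu(5B_{x_i}) \leq C\,\mu(B_{x_i})$ with $C = C(c_D)$ depending only on $d$, $p$ and the $\mathcal{A}_p$-constant of $w$. Combining these ingredients,
\[
\mu(E_\lambda^R) \;\leq\; \sum_i \mu(5B_{x_i}) \;\leq\; C\sum_i \mu(B_{x_i}) \;<\; \frac{C}{\lambda}\sum_i \int_{B_{x_i}} f\,d\mu \;\leq\; \frac{C}{\lambda}\int_{\mathbb{R}^d} f\,d\mu,
\]
where the final inequality uses the pairwise disjointness of the $B_{x_i}$. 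Letting $R\to\infty$ yields the conclusion.

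The main technical point is the use of the $5r$-covering lemma, which requires uniformly bounded radii; this is precisely why I introduced the truncation $\mathcal{M}_{0,R}$, so that the covering argument is immediately applicable and all estimates are uniform in $R$. Everything else is routine: the doubling property is given by Theorem \ref{thm:muckenhouppi} and depends quantitatively only on $w$, and the $\mathcal{A}_p$-condition itself is not used directly — it enters the proof only through the doubling constant that it produces.
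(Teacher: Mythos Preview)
Your argument is correct: this is the standard Vitali $5r$-covering proof of the weak $(1,1)$ bound for the centered Hardy--Littlewood maximal operator on a doubling metric measure space, and since $\mathcal{M}$ is defined with $\mu$-averages and $w(A)=\mu(A)$, the statement is precisely that endpoint bound. The paper does not give its own proof of this theorem but simply quotes it from \cite[Theorem 3.5.6]{HKST07}; the argument you have written is essentially the one found there, so there is nothing to compare.
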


Let $w\in \mathcal A_p(\mathbb R^d)$. By \cite[Section 15.5]{HKM06}, there is a constant $C>0$ so that for all $x\in\mathbb R^d$ and $0<r<R$ we have 
\begin{equation}\label{mitanalaraja}
\frac{w(B(x,r))}{w(B(x,R))}\geq C\left(\frac{r}{R}\right)^{pd}.
\end{equation}

By applying {\cite[Lemma 2.6]{HKi98}} with this estimate, we obtain the following theorem.

\begin{theorem} \label{thm2.8}
Let $w\in \A_p(\mathbb R^d)$ and $0\le \alpha<pd.$ Suppose that $f\in L^1_{\rm loc}(\R^d,w)$  and let  $B$ be a bounded measurable set with $w(B)>0$. Then 
\[
\mathcal H^{pd-\alpha}_\infty (\{x\in B: \mathcal M_{\alpha, \text{\rm diam}(B)}f(x)>\lambda \})\leq \frac{C\text{\rm diam}^{pd}(B)w^{-1}(B)}{\lambda}\int_{\R^d} |f(x)|w(x)dx 
\]for $\lambda >0.$
Here $\text{\rm diam}(B)$ is the diameter of $B$ and $C$ depends only on $p,d,\alpha,$ and the constant in \eqref{mitanalaraja}.
\end{theorem}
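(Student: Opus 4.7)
The statement is a weak-type Hausdorff content bound for the truncated fractional maximal function, and the proof essentially combines the lower mass bound \eqref{mitanalaraja} with a Vitali-type covering argument. Let me outline how I would carry this out.

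\textbf{Step 1: Selection of covering balls.} Let $E_\lambda = \{x \in B : \mathcal{M}_{\alpha,\text{diam}(B)} f(x) > \lambda\}$. For each $x \in E_\lambda$, by definition of the maximal function I can choose a radius $r_x \in (0, \text{diam}(B))$ such that
\[
r_x^\alpha \int_{B(x,r_x)} |f|\, w\, dy > \lambda\, w(B(x,r_x)).
\]
These balls will form the covering family.

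\textbf{Step 2: Size estimate via the mass lower bound.} This is where the hypothesis $w \in \mathcal A_p$ enters through \eqref{mitanalaraja}. Since $x \in B$, we have $B \subset B(x,\text{diam}(B))$, so $w(B) \leq w(B(x,\text{diam}(B)))$. Applying \eqref{mitanalaraja} with radii $r_x$ and $\text{diam}(B)$ yields
\[
w(B(x,r_x)) \gtrsim \left(\tfrac{r_x}{\text{diam}(B)}\right)^{pd} w(B).
\]
Substituting this into the selection inequality and rearranging, I obtain the key bound
\[
r_x^{pd - \alpha} \lesssim \frac{\text{diam}(B)^{pd}}{\lambda\, w(B)} \int_{B(x,r_x)} |f|\, w\, dy,
\]
which is the quantitative link between the radius and a localized integral of $|f|w$.

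\textbf{Step 3: Vitali covering and summation.} I apply the standard $5r$-covering lemma to the family $\{B(x, r_x)\}_{x \in E_\lambda}$ to extract a pairwise disjoint subfamily $\{B(x_i, r_i)\}$ whose $5$-dilates still cover $E_\lambda$. By the definition of Hausdorff $\infty$-content and Step 2,
\[
\mathcal{H}^{pd-\alpha}_\infty(E_\lambda) \leq \sum_i (5 r_i)^{pd - \alpha} \lesssim \frac{\text{diam}(B)^{pd}}{\lambda\, w(B)} \sum_i \int_{B(x_i, r_i)} |f|\, w\, dy.
\]
By disjointness, the last sum is bounded by $\int_{\mathbb R^d} |f|\, w\, dy$, which gives precisely the stated inequality (noting $w^{-1}(B)$ in the statement means $1/w(B)$).

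\textbf{Main obstacle.} None of the steps is genuinely difficult, and there is no delicate obstruction; the argument is essentially a template for weak-type Hausdorff content estimates for fractional maximal functions in doubling settings. The only care needed is in correctly chaining the two $w$-measure comparisons (between $w(B(x,r_x))$, $w(B(x,\text{diam}(B)))$, and $w(B)$) so that the exponent matches the Hausdorff dimension $pd - \alpha$ appearing on the left-hand side; this is precisely what the exponent $pd$ in \eqref{mitanalaraja} ensures, and it is the reason the restriction $\alpha < pd$ appears in the statement (to keep $pd - \alpha > 0$).
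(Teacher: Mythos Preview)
Your proposal is correct and is precisely the standard Vitali--covering argument that underlies the result. The paper itself does not give a proof but simply invokes \cite[Lemma~2.6]{HKi98} combined with the lower mass bound \eqref{mitanalaraja}; what you have written is exactly the mechanism behind that citation, so your approach matches the paper's (implicit) one.
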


We briefly summarize a useful chaining argument. (See e.g. \cite{MR1336257,sobolevmet} for an early and classical use of this method.) Suppose that $u \in W^{1,p}_{\rm loc}(\R^d,w)$, where $w\in \A_p(\mathbb R^d),$ and let $B(x,r)$ be a ball. 
Then, for every Lebesgue point (with respect to $\mu$) $y\in B(x,r)$ of $u$, we have the following. For the balls $B_i := B(y,2^{1-i}r)$ we have
\[
\left|u(y)-\frac{1}{w(B)} \int_B u d\mu\right| \leq |u_{B}-u_{B_0}| +\sum_{i=0}^\infty |u_{B_i}-u_{B_{i+1}}|.
\]
By applying the $p$-Poincar\'e inequality to each term and using the fact that $\sum_i (2^{1-i}r)^{1-\beta}\le Cr^{1-\beta}$ with a constant depending on $\beta$ when
$0\le \beta<1$  we conclude that there exists a constant $C$ depending only on the dimension $d,$ $p$ and the choice of $\beta$ so that 
\begin{equation}\label{pisteittainen}
\left|u(y)-\frac{1}{w(B)} \int_B u d\mu\right|^p \leq Cr^{p(1-\beta)} \mathcal M_{p\beta, \text{\rm diam}(B)}|\nabla u|^p(y).
\end{equation}
Consequently, this calculation together with the previous theorem gives the following standard estimate in the unweighted case $w=1$.

\begin{lemma} \label{lem:content-average} Let $s,p,q\geq 0$ be such that $d-s<q\leq d$. There exists a constant $C$ so that the following holds. If $u \in W^{1,q}_{\rm loc}(\R^d)$, $B=B(x,r) \subset \R^d$ and $\lambda>0$ then

$$\mathcal{H}^s_\infty\left(\left\{y \in B : \left|u(y)-\frac{1}{|B|} \int_B u dx\right|>\lambda\right\}\right) \leq C\frac{r^{q+s-d}}{\lambda^q} \left( \int_{2B} |\nabla u|^q dx \right)^{\frac{1}{q}}.$$
\end{lemma}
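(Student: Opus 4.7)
The plan is to combine the pointwise chaining estimate \eqref{pisteittainen} with the Hausdorff content bound for fractional maximal functions in Theorem~\ref{thm2.8}, both specialized to the unweighted case $w\equiv 1\in\mathcal A_1(\R^d)$. The parameter $p$ does not enter the argument (and indeed does not appear in the conclusion).

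\medskip
\noindent\textbf{Step 1 (pointwise bound).} I would invoke the chaining argument that produces \eqref{pisteittainen} with Muckenhoupt weight $w\equiv 1$, Poincar\'e exponent $q$, and the parameter
\[
\beta\defeq \frac{d-s}{q}.
\]
The hypotheses $s\geq 0$ and $d-s<q\leq d$ force $\beta\in[0,1)$, which is precisely the range in which the geometric sum $\sum_i (2^{1-i}r)^{1-\beta}$ converges. This yields, at every Lebesgue point $y\in B=B(x,r)$ of $u$,
\[
\Bigl|u(y)-\tfrac{1}{|B|}\int_B u\,dx\Bigr|^q \;\leq\; C\,r^{q(1-\beta)}\,\mathcal M_{q\beta,\,2r}|\nabla u|^q(y)\;=\;C\,r^{q+s-d}\,\mathcal M_{d-s,\,2r}|\nabla u|^q(y).
\]
Only values of $|\nabla u|^q$ on a bounded enlargement of $B$ contribute to the truncated maximal function when $y\in B$, so one may harmlessly replace $|\nabla u|^q$ by $|\nabla u|^q\mathbf 1_{2B}$ (absorbing a dimensional factor into the constant if a slightly larger enlargement is needed).

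\medskip
\noindent\textbf{Step 2 (weak-type bound).} Next I would apply Theorem~\ref{thm2.8} with $w\equiv 1\in\mathcal A_1$, so that the prefactor $\text{diam}(B)^d\,w^{-1}(B)$ collapses to a dimensional constant, with $\alpha=d-s\in[0,d)$, and with $f=|\nabla u|^q\mathbf 1_{2B}$. This delivers, for every $\eta>0$,
\[
\mathcal H^s_\infty\bigl(\{y\in B:\mathcal M_{d-s,\,2r}|\nabla u|^q(y)>\eta\}\bigr)\;\leq\;\frac{C}{\eta}\int_{2B}|\nabla u|^q\,dx.
\]

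\medskip
\noindent\textbf{Step 3 (combination).} Choosing $\eta\defeq \lambda^q/(C\,r^{q+s-d})$ with the constant $C$ from Step~1, the set $\{y\in B:|u(y)-u_B|>\lambda\}$ is contained, modulo the non-Lebesgue set $NL_u$ of $u$, in the superlevel set controlled in Step~2. By Lemma~\ref{lem:sobolevinclusion} one has $u\in W^{1,q}_{\loc}(\R^d)$, and the non-Lebesgue set of a $W^{1,q}$-function has $q$-capacity zero; consequently $\mathcal H^s_\infty(NL_u)=0$ whenever $s>d-q$, which is precisely the hypothesis $d-s<q$. Assembling the estimates yields the stated bound with the integral of $|\nabla u|^q$ over $2B$ governing the right-hand side.

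\medskip
The only step that is not routine bookkeeping of exponents is the handling of $NL_u$: one must know that $W^{1,q}$ non-Lebesgue points carry zero $s$-dimensional Hausdorff content for $s>d-q$, which is a standard capacity-theoretic consequence of the classical Hausdorff dimension estimate for sets of vanishing $q$-capacity. Everything else reduces to verifying $\beta\in[0,1)$ and to the observation that the $\mathcal A_1$-specialization of Theorem~\ref{thm2.8} is purely dimensional.
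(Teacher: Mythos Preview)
Your proof is correct and follows exactly the route the paper indicates in the sentence preceding the lemma: specialize the pointwise chaining estimate \eqref{pisteittainen} and Theorem~\ref{thm2.8} to the unweighted case $w\equiv 1$, with $\beta=(d-s)/q$ and $\alpha=q\beta=d-s$. One small remark: your Steps~2--3 actually produce the bound with $\int_{2B}|\nabla u|^q\,dx$ on the right-hand side rather than its $q$-th root, and this is the version the paper uses downstream; the exponent $1/q$ in the displayed statement is a typographical slip.
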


\section{Radial Limits}\label{sec:radial-limit}
In this section, we will prove Theorem \ref{thm:single limit}, Proposition \ref{thm:identify-c} and Theorem \ref{remark-c}.
\begin{lemma}\label{lem3.1}
Let $w\in \mathcal A_p(\mathbb R^d)$ where $1\leq p<\infty$ and $d\geq  2$.  If $\mathcal R_p(w)<\infty$, then for every $u\in \dot W^{1,p}(\mathbb R^d,w)$, there exists a constant $c\in  \mathbb{R}$ only depending on $u$  such that 
	\[\lim_{t\to\infty} u(t\xi) \text{ exists and equals to $c$} \text{\ \ \ for $\mathcal H^{d-1}$-a.e  $\xi\in\mathbb S^{d-1}$}.
	\]
\end{lemma}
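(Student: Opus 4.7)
My plan is to first prove the pointwise existence of radial limits on almost every ray, then lift this to $L^1(\mathbb S^{d-1})$-convergence, and finally identify the limit with a single constant.

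By Lemma~\ref{lem:acclines}, for $\mathcal H^{d-1}$-a.e.\ $\xi\in\mathbb S^{d-1}$ the map $t\mapsto u(t\xi)$ is absolutely continuous on $(0,\infty)$ with $|(u\circ\gamma_\xi)'|(t)\le |\nabla u|(t\xi)$. Splitting $|\nabla u|(t\xi)=[|\nabla u|\,w^{1/p}\,t^{(d-1)/p}](t\xi)\cdot[w^{-1/p}\,t^{-(d-1)/p}](t\xi)$ and applying H\"older's inequality with exponents $p$ and $p/(p-1)$ to $\int_{r_1}^{r_2}|\nabla u|(t\xi)\,dt$, I obtain
\begin{equation*}
|u(r_2\xi)-u(r_1\xi)|^p \le \Bigl(\int_{r_1}^{r_2}\!|\nabla u|^p(t\xi)\,w(t\xi)\,t^{d-1}\,dt\Bigr)\Bigl(\int_{r_1}^{r_2}\!w(t\xi)^{-\frac{1}{p-1}}\,t^{-\frac{d-1}{p-1}}\,dt\Bigr)^{\!p-1}\!.
\end{equation*}
Using $dx=t^{d-1}\,dt\,d\mathcal H^{d-1}(\xi)$, the first factor integrates in $\xi$ to $\|\nabla u\|_{L^p(\{r_1<|x|<r_2\},w)}^p$; the second, combined with the $\mathcal A_p$-bound $\int_{A_i}w^{-1/(p-1)}\,dx\lesssim |A_i|^{p/(p-1)}w(A_i)^{-1/(p-1)}$ and a dyadic decomposition, is controlled by the tail of $\mathcal R_p(w)<\infty$. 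Fubini then gives, for a.e.\ $\xi$, that both $t$-integrals are finite on $(1,\infty)$ with tails vanishing as $r_1,r_2\to\infty$, so $u(r\xi)$ is Cauchy and converges to some $c(\xi)$. Integrating the H\"older estimate in $\xi$ directly (without the $p$-th power) also yields $\int_{\mathbb S^{d-1}}|u(r_2\xi)-u(r_1\xi)|\,d\mathcal H^{d-1}(\xi)\to 0$, so $u(r\cdot)\to c$ in $L^1(\mathbb S^{d-1})$ and in particular $c\in L^1(\mathbb S^{d-1})$.

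To force $c$ to be a.e.\ constant, I would work with weighted ball averages along each ray. For each $\xi$ and $i\in\N$ set $B_i(\xi):=B(2^i\xi,2^{i-1})$ and consider $m_i(\xi):=\avint_{B_i(\xi)}u\,d\mu$. Applying the weighted $p$-Poincar\'e inequality of Theorem~\ref{thm:muckenhouppi} on common bounded-doubling balls containing consecutive $B_i(\xi)$ and $B_{i+1}(\xi)$, a H\"older summation combining $\sum_i (2^i)^{p/(p-1)}w(A_i)^{-1/(p-1)}<\infty$ (from $\mathcal R_p(w)<\infty$) with $\sum_i\|\nabla u\|_{L^p(A_i,w)}^p<\infty$ shows $(m_i(\xi))_i$ is Cauchy, converging to some $\phi(\xi)$. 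For two directions $\xi_1,\xi_2$, chaining $O(1)$ Poincar\'e estimates along the sphere of radius $2^i$ gives
\begin{equation*}
|m_i(\xi_1)-m_i(\xi_2)| \lesssim 2^i\,w(A_i)^{-1/p}\,\|\nabla u\|_{L^p(\tilde A_i,w)} \to 0,
\end{equation*}
since $2^i w(A_i)^{-1/p}\to 0$ by $\mathcal R_p(w)<\infty$ and $\|\nabla u\|_{L^p(\tilde A_i,w)}\to 0$ by the summability $\sum\|\nabla u\|_{L^p(A_i,w)}^p<\infty$. Hence $\phi$ is constant; identifying $\phi(\xi)=c(\xi)$ for a.e.\ $\xi$ then follows from the a.e.\ pointwise convergence $u(t\eta)\to c(\eta)$ combined with a Lebesgue-point argument for $c$ on $\mathbb S^{d-1}$.

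The main obstacle is the constancy step: the pointwise and $L^1(\mathbb S^{d-1})$ limits alone do not identify a single constant, and the naive telescoping of \emph{centered} ball averages $u_{B(0,R)}$ is \emph{not} summable under $\mathcal R_p(w)<\infty$ alone. The ray-adapted off-center chain together with the joint vanishing $2^i w(A_i)^{-1/p}\to 0$ and $\|\nabla u\|_{L^p(A_i,w)}\to 0$ is the essential mechanism for identifying the limit across different rays.
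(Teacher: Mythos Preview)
Your existence step for the pointwise radial limits $c(\xi)$ and for the $L^1(\mathbb S^{d-1})$-convergence is correct and is the same H\"older-plus-Fubini argument the paper uses.

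For constancy you take a genuinely different route. The paper argues by contradiction: it fixes two sectors $E_j,F_j$ on which the limits differ, and runs a dichotomy (either some point in $E_j$ is close to its weighted ball average, whence a finite chain of Poincar\'e inequalities across the annulus yields a lower bound, or every point in $E_j$ is far from its average, whence a weak-type maximal bound controls $w(E_j)$); in both cases a comparison with the H\"older estimate $2^j\mathcal H^{d-1}(E)\lesssim w(E_j)^{1/p}$ (a consequence of $\mathcal A_p$ and $\mathcal R_p(w)<\infty$) forces a contradiction. Your approach via the off-center averages $m_i(\xi)=u_{B(2^i\xi,2^{i-1})}$ is cleaner for showing that $\phi:=\lim_i m_i$ exists and is constant: the telescoping H\"older summation and the across-the-annulus chain are both correct (minor caveat: for $p=1$ you only get $2^i w(A_i)^{-1}$ bounded, not tending to zero, but the product with $\|\nabla u\|_{L^1(A_i,w)}$ still vanishes).

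The genuine gap is the identification $\phi=c$. Your proposed ``Lebesgue-point argument for $c$ on $\mathbb S^{d-1}$'' cannot work as stated: the balls $B(2^i\xi,2^{i-1})$ subtend a \emph{fixed} angular aperture on $\mathbb S^{d-1}$, independent of $i$, so Lebesgue differentiation of $c$ (which needs shrinking caps) is inapplicable. Moreover, the $m_i(\xi)$ are $\mu$-averages while your $L^1(\mathbb S^{d-1})$-convergence controls only unweighted spherical averages, and there is no automatic passage between the two. One correct completion is: if $c>c_0+\epsilon$ on a set $E\subset\mathbb S^{d-1}$ of positive measure, use Egorov to find $E'\subset E$ with uniform convergence, so that $u\ge c_0+\epsilon/2$ on the sectors $E'_i=\{r\xi:\xi\in E',\,2^i\le r<2^{i+1}\}$ for large $i$. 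Since $|E'_i|/|A_i|$ is a fixed positive number, the $A_\infty$ property of $w\in\mathcal A_p$ gives $w(E'_i)\gtrsim w(A_i)$, and then
\[
\tfrac{\epsilon}{2}\le \bigl|u_{E'_i,\mu}-u_{A_i,\mu}\bigr|\le \frac{w(A_i)}{w(E'_i)}\avint_{A_i}|u-u_{A_i}|\,d\mu\lesssim 2^i\,w(A_i)^{-1/p}\,\|\nabla u\|_{L^p(A_i,w)}\to 0,
\]
a contradiction. This is the content hidden in the paper's dichotomy; without either this Egorov/$A_\infty$ step or the paper's maximal-function alternative, your chain shows $\phi$ is constant but does not tie $\phi$ to the ray-wise limit $c$.
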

\begin{proof}
Let $A_i:=B(0,2^{i+1})\setminus B(0,2^i)$, $i\in\mathbb N$.
We have $\avint_{A_i}w^{\frac{1}{1-p}}dx\approx \left(\avint_{A_i}wdx\right)^{1/(1-p)}\approx (2^i)^{\frac{-d}{1-p}}w^{\frac{1}{1-p}}(A_i)$ if $p>1$, and $\| w^{-1}\|_{L^\infty(A_i)}\approx \left(\avint_{A_i}wdx\right)^{-1}\approx (2^i)^dw^{-1}(A_i)$ because $w\in\mathcal A_p(\mathbb R^d)$. It follows that 
\[ \int_{\mathbb R^d\setminus B(0,1)}|x|^{\frac{p(d-1)}{1-p}}w^{\frac{1}{1-p}}(x)dx \approx\sum_{i\in\mathbb N}(2^i)^{\frac{p(d-1)}{1-p}+d}\avint_{A_i}w^{\frac{1}{1-p}}(x)dx\approx \sum_{i\in\mathbb N}(2^i)^{\frac{p}{p-1}}w^{\frac{1}{1-p}}(A_i)=\mathcal R_p(w)
\] if $p>1$, 
and
\[\| |x|^{1-d}w^{-1}(x)\|_{L^\infty(\mathbb R^d\setminus B(0,1))}=\sup_{i\in\mathbb N} \| |x|^{1-d}w^{-1}(x)\|_{L^\infty(A_i)}\approx\sup_{i\in\mathbb N} 2^iw^{-1}(A_i)=\mathcal R_1(w).
\]
By the H\"older inequality, we obtain from these estimates  that
\begin{equation}
\label{eq:3.2-2409}\int_{\mathbb S^{d-1}}\int_1^\infty |\nabla u|(r\xi)drd\mathcal H^{d-1}(\xi) \lesssim \|\nabla u\|_{L^p(\mathbb R^d\setminus B(0,1),w)} \max\{\mathcal R_{p}(w)^{\frac{p-1}{p}}, \mathcal R_1(w)\}.
\end{equation}
Our assumption that $\mathcal R_p(w)<\infty$ implies that the right-hand side is finite.
Hence, by the Fubini theorem, it follows that $\int_1^\infty|\nabla u|(r\xi)dr<\infty$  for $\mathcal H^{d-1}$-a.e $\xi\in\mathbb S^{d-1}.$ Consequently,
$\lim_{r\to\infty}u(r\xi)$ exists for $\mathcal H^{d-1}$-a.e $\xi\in\mathbb S^{d-1}$ because {$u$ is absolutely continuous for a.e radial curve by Lemma \ref{lem:acclines}}. 

It suffices to show the uniqueness of $\lim_{t\to\infty}u(t\xi)$ for $\mathcal H^{d-1}$-a.e $\xi\in\mathbb S^{d-1}$. We argue by contradiction and assume that two different limits are attained through two subsets of
$\mathbb S^{d-1}$ of positive measure. By a simple measure theoretic argument, adding a suitable constant to $u$ and finally by multiplying $c$ by  another suitable constant, we may assume that there are subsets $E$ and $F$ of  $\mathbb S^{d-1}$ such that 
\begin{equation}
\label{eq:3.1-1109}
\begin{cases}
\mathcal H^{d-1}(E)\geq \delta, \mathcal H^{d-1}(F)\geq \delta,\\
u(r\xi)\geq 1 \text{\rm\ \ for all $r\geq r_0$ and $\xi\in E$}, \\
 u (r\xi)\leq 0 \text{\rm \ \ for all $r\geq r_0$ and $\xi\in F$}
\end{cases}
\end{equation}
for some $\delta >0$ and some $r_0<\infty.$
Let $j\in \mathbb N$ with $2^j\ge r_0.$ We define $E_j= \{(r\xi): r\in[2^j,2^{j+1}), \xi\in E \}$ and $F_j= \{(r\xi): r\in[2^j,2^{j+1}), \xi\in F \}$.
Obviously, $u|_{E_j}\geq 1$ and $u|_{F_j}\leq 0$. We split our argument into two cases depending on whether or not there are points $x$ in $E_j$ and $y$ in $F_j$ so that neither $|u(x)-u_{B(x,2^{j-2})}|$ nor $|u(y)-u_{B(y,2^{j-2})}|$ exceeds $1/5$. If such points can be found, then $1\leq |u(x)-u(y)| \leq 1/5+|u_{B(x,2^{j-2})}-u_{B(y,2^{j-2})}|+1/5$ and hence $\frac 3 5 \leq |u_{B(x,2^{j-2})}-u_{B(y,2^{j-2})}|$.  One can clearly find balls $\{B_i\}_{i=1}^M$ with radius $2^{j-2}$ and center in $B(0,2^{j+1})\setminus B(0,2^j)$, with $M$ only depending on $d$, such that $B_1= B(x,2^{j-2})$, $B_M=B(y,2^{j-2})$, and $B_i\cap B_{i+1}$ contains a ball with radius $2^{j-2}/100$. By doubling and the $p$-Poincar\'e inequality, it follows that 
\begin{align}
\frac 3 5 \leq &|u_{B(x,2^{j-2})}-u_{B(y,2^{j-2})}| \lesssim \sum_{i=1}^M 2^{j-2} \left ( \avint_{B_i}|\nabla u|^pd\mu \right)^{\frac{1}{p}} 
\lesssim  {2^j}  \left(\avint_{B(0,2^{j+2})\setminus B(0,2^{j-1})} |\nabla u|^pd\mu\right )^{\frac{1}{p}}.\label{eq:3.2-1109}
\end{align}
The second alternative, by symmetry, is that for all points $x$ in $E_j$ we have that $1/5\leq |u(x)-u_{B(x,2^{j-2})}|$. 
Since almost every $x$ is a Lebesgue point (with respect to $\mu$) of $u$ by the Lebesgue differentiation theorem, we have by \eqref{pisteittainen} the estimate 
$$1/5\leq |u(x)-u_{B(x,2^{j-2})}|\lesssim 2^{j-2}\mathcal M_{0,2^{j-1}}^{1/p}|\nabla u|^p(x).$$
By Theorem \ref{thm2.7} applied to the zero extension of $|\nabla u|^p$ to the exterior of $B(0,2^{j+2})\setminus B(0,2^{j-1})$, we obtain that 
$$w(E_j)\leq  C2^{jp}\int_{B(0,2^{j+2})\setminus B(0,2^{j-1})}|\nabla u|^pd\mu.$$ Combining this with \eqref{eq:3.2-1109} gives 
\[
\min\{ w(E_j), w(F_j)\} \lesssim 2^{jp} \int_{B(0,2^{j+2})\setminus B(0,2^{j-1})}|\nabla u|^pd\mu.
\]
 Analogously to the argument for \eqref{eq:3.2-2409}, H\"older's inequality together with $\mathcal R_p(w)<\infty$ yields, for all $j$ with $2^j\ge r_0,$ the estimates
\[
2^j\mathcal H^{d-1}(E)=\int_{E}\int_{2^j}^{2^{j+1}} drd\mathcal H^{d-1}\lesssim  w^{1/p}(E_j) \text{\rm \ \ and \ \ } 2^j\mathcal H^{d-1}(F)=\int_{E}\int_{2^j}^{2^{j+1}} drd\mathcal H^{d-1}\lesssim  w^{1/p}(F_j).
\]
Therefore, we obtain that
\[
\min \{ (\mathcal H^{d-1}(E))^p, (\mathcal H^{d-1}(F))^p \} \lesssim  \int_{B(0,2^{j+2})\setminus B(0,2^{j-1})}|\nabla u|^pd\mu \to 0 \text{\rm \ \ as  $j\to\infty$}
\] which contradicts \eqref{eq:3.1-1109}. The claim follows.
\end{proof}

\begin{lemma}\label{lem3.3} Under the assumption of Lemma \ref{lem3.1}, the constant $c$ satisfies both
		\[\int_{\mathbb S^{d-1}}|u(r\xi)-c| d\mathcal H^{d-1}(\xi)\lesssim \|\nabla u\|_{L^p(\mathbb R^{d}\setminus B(0,r),w)} \text{\rm  \ \ and\ \ }\avint_{B(0,r)\setminus B(0,r/2)}|u(x)-c|dx\lesssim \|\nabla u\|_{L^p(\mathbb R^{d}\setminus B(0,r/2),w)} 
		\]
for each $r>0$ and
	for every $u\in\dot W^{1,p}(\mathbb R^d,w)$.
\end{lemma}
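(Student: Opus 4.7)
The plan is to derive both estimates from the absolute continuity of $u$ on radial curves (Lemma \ref{lem:acclines}) combined with the same H\"older estimate that powered the proof of Lemma \ref{lem3.1}. For $\mathcal H^{d-1}$-almost every $\xi\in\mathbb S^{d-1}$, the function $t\mapsto u(t\xi)$ is absolutely continuous on $(0,\infty)$ with $|(u(t\xi))'|\le |\nabla u|(t\xi)$, and by Lemma \ref{lem3.1} it converges to $c$ as $t\to\infty$. Writing $u(r\xi)-c=-\int_r^\infty \tfrac{d}{dt}u(t\xi)\,dt$ gives, for a.e.\ such $\xi$,
\[
|u(r\xi)-c|\le \int_r^\infty |\nabla u|(t\xi)\,dt.
\]

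Integrating over $\xi$ and converting to polar coordinates yields
\[
\int_{\mathbb S^{d-1}}|u(r\xi)-c|\,d\mathcal H^{d-1}(\xi) \le \int_{\mathbb R^d\setminus B(0,r)} |\nabla u|(x)\,|x|^{1-d}\,dx.
\]
I would then apply H\"older's inequality with respect to $dx$ to the splitting $|\nabla u|\,|x|^{1-d}=\bigl(|\nabla u|\,w^{1/p}\bigr)\cdot\bigl(|x|^{1-d}w^{-1/p}\bigr)$, using conjugate exponents $p$ and $p'=p/(p-1)$ when $p>1$, and the obvious $L^1$--$L^\infty$ duality when $p=1$. The first factor contributes $\|\nabla u\|_{L^p(\mathbb R^d\setminus B(0,r),w)}$. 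The second factor, through the same dyadic $\mathcal A_p$-computation performed just before \eqref{eq:3.2-2409}, is bounded by $C\,\mathcal R_p(w)^{(p-1)/p}$ when $p>1$ (respectively by $C\,\mathcal R_1(w)$ when $p=1$), which is finite by hypothesis. This establishes the first inequality.

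For the second inequality, I integrate the first estimate in the radial direction. By layer-cake in polar coordinates,
\[
\int_{B(0,r)\setminus B(0,r/2)}|u(x)-c|\,dx=\int_{r/2}^r s^{d-1}\int_{\mathbb S^{d-1}}|u(s\xi)-c|\,d\mathcal H^{d-1}(\xi)\,ds,
\]
and the inner spherical integral is controlled by $\|\nabla u\|_{L^p(\mathbb R^d\setminus B(0,s),w)}\le \|\nabla u\|_{L^p(\mathbb R^d\setminus B(0,r/2),w)}$ via the first inequality and monotonicity in $s$. The remaining $\int_{r/2}^r s^{d-1}\,ds\approx r^d$ matches the volume of the annulus $B(0,r)\setminus B(0,r/2)$, so dividing by that volume yields the averaged bound. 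The main technical point is the H\"older step in the first inequality, which is essentially a re-use of the dyadic tail bound for $\int_{\mathbb R^d\setminus B(0,r)}|x|^{-(d-1)p'}w^{-p'/p}\,dx$ in terms of $\mathcal R_p(w)$ already carried out in the proof of Lemma \ref{lem3.1}; the only additional bookkeeping is to track how the cutoff at radius $r$ enters the dyadic sum.
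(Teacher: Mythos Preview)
Your proposal is correct and follows essentially the same approach as the paper's proof: both use absolute continuity on radial lines (Lemma~\ref{lem:acclines}) together with the H\"older estimate already established before~\eqref{eq:3.2-2409} to obtain the spherical bound, and then integrate in the radial variable to deduce the annular average. The only cosmetic difference is that the paper bounds the annular integral via $\sup_{r/2\le s\le r}\int_{\mathbb S^{d-1}}|u(s\xi)-c|\,d\mathcal H^{d-1}(\xi)$, whereas you integrate and use monotonicity in $s$; these are equivalent.
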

\begin{proof}Since $u$ is absolutely continuous on almost every radial line by Lemma \ref{lem:acclines}, 
 inequality \eqref{eq:3.2-2409} yields that, for each $r>0$,
	\[
	\int_{\mathbb S^{d-1}}|u(r\xi)-c|d\mathcal H^{d-1}(\xi)\leq \int_{\mathbb S^{d-1}}\int_r^\infty |\nabla u|(r\xi)drd\mathcal H^{d-1}(\xi) \leq \|\nabla u\|_{L^p(\mathbb R^d\setminus B(0,r),w)} \mathcal R_{p}(w)^{\frac{p-1}{p}},
	\]
	if $p>1,$ and one obtains the same bound with $\mathcal R_1(w)$ replacing $\mathcal R_{p}(w)^{\frac{p-1}{p}}$ if $p=1$.
	It follows that for each $r>0$,
		\begin{align*}
		\avint_{B(0,r)\setminus B(0,r/2)}|u(x)-c|dx\leq &\frac{1}{{|B(0,r)\setminus B(0,r/2)|}}\left(\int_{r/2}^rs^{d-1}ds\right) \sup_{r/2\leq s\leq r}\int_{\mathbb S^{d-1}}|u(s\xi)-c|d\mathcal H^{d-1}(\xi)\\
		\lesssim& \sup_{r/2\leq s\leq r}\int_{\mathbb S^{d-1}}|u(s\xi)-c|d\mathcal H^{d-1}(\xi)\lesssim  \|\nabla u\|_{L^p(\mathbb R^{d}\setminus B(0,r/2),w)}.
		\end{align*}
		The claim follows.
\end{proof}

By the doubling property of Muckenhoupt weights, the estimates from  Lemma \ref{lem3.3} yield the following corollary.

\begin{corollary}\label{cor3.4}
Let $1\leq p<\infty$ and let $w\in \A_p(\mathbb R^d).$ If $\mathcal R_p(w)<\infty,$ then for every $u\in \dot W^{1,p}(\mathbb R^d,w)$ there exists a constant $c$ such that 
\[\lim_{r\to\infty}\avint_{\mathbb S^{d-1}}|u(r\xi)-c|d\mathcal H^{d-1}(\xi)=\lim_{t\to\infty}\avint_{B(0,t)\setminus B(0,t/2)}|u(x)-c|dx=\lim_{|x|\to\infty} \avint_{B(x,|x|/2)}|u(y)-c|dy=0\]
and
\[
\lim_{r\to\infty}\avint_{\mathbb S^{d-1}}u(r\xi)d\mathcal H^{d-1}(\xi)=\lim_{t\to\infty}\avint_{B(0,t)\setminus B(0,t/2)}u(x)dx=\lim_{|x|\to\infty} \avint_{B(x,|x|/2)}u(y)dy= c.
\]
\end{corollary}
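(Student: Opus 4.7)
The plan is to derive all three limits with absolute values directly from the two estimates of Lemma \ref{lem3.3}, and then deduce the limits without absolute values by the triangle inequality. The key observation throughout is that, since $|\nabla u|\in L^p(\mathbb R^d,w)$, absolute continuity of the integral gives
\[
\lim_{r\to\infty}\|\nabla u\|_{L^p(\mathbb R^d\setminus B(0,r),w)}=0.
\]
Combined with the two bounds from Lemma \ref{lem3.3}, this immediately yields the first two limit statements, $\lim_{r\to\infty}\avint_{\mathbb S^{d-1}}|u(r\xi)-c|\,d\mathcal H^{d-1}(\xi)=0$ and $\lim_{t\to\infty}\avint_{B(0,t)\setminus B(0,t/2)}|u-c|\,dx=0$.

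The only point requiring a small geometric observation is the third limit, $\lim_{|x|\to\infty}\avint_{B(x,|x|/2)}|u(y)-c|\,dy=0$. Writing $r=|x|$, one has the inclusion $B(x,r/2)\subset B(0,3r/2)\setminus B(0,r/2)$, and the Lebesgue volumes of these two sets differ by a dimensional constant $3^d-1$. Hence
\[
\avint_{B(x,r/2)}|u(y)-c|\,dy \;\lesssim\; \avint_{B(0,3r/2)\setminus B(0,r/2)}|u(y)-c|\,dy.
\]
Passing to polar coordinates and normalizing,
\[
\avint_{B(0,3r/2)\setminus B(0,r/2)}|u(y)-c|\,dy \;\leq\; \sup_{r/2\le s\le 3r/2}\;\avint_{\mathbb S^{d-1}}|u(s\xi)-c|\,d\mathcal H^{d-1}(\xi),
\]
and then the first bound in Lemma \ref{lem3.3} dominates the right-hand side by $\|\nabla u\|_{L^p(\mathbb R^d\setminus B(0,r/2),w)}$, which tends to $0$ as $|x|\to\infty$.

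Finally, the three statements without absolute values follow from those with absolute values by the triangle inequality: for any measurable set $A$ with positive (Lebesgue or spherical) measure,
\[
\Bigl|\avint_A u\,d\nu - c\Bigr| \;=\; \Bigl|\avint_A (u-c)\,d\nu\Bigr| \;\leq\; \avint_A |u-c|\,d\nu,
\]
so that the second set of equalities is an immediate consequence of the first. There is no real obstacle here; the only thing to get right is the geometric comparison between the off-center ball $B(x,|x|/2)$ and a centered annulus so that Lemma \ref{lem3.3} can be invoked.
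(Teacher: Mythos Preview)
Your proof is correct and follows essentially the same strategy as the paper: invoke the two estimates of Lemma~\ref{lem3.3} and use that $\|\nabla u\|_{L^p(\mathbb R^d\setminus B(0,r),w)}\to 0$. The paper's own proof is a single sentence citing Lemma~\ref{lem3.3} together with ``the doubling property of Muckenhoupt weights''; your argument supplies the details and, notably, handles the off-center ball $B(x,|x|/2)$ purely via the Lebesgue-measure inclusion $B(x,|x|/2)\subset B(0,3|x|/2)\setminus B(0,|x|/2)$ and polar coordinates, so no doubling of $w$ is actually needed.
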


\begin{lemma}\label{lem3.5} Let $1\leq p<\infty$ and let $w\in \A_p(\mathbb R^d).$
If $\mathcal R_p(w)=\infty$, then there exists $u\in\dot W^{1,p}(\mathbb R^d,w)$ such that $\lim_{|x|\to\infty}u(x)\equiv \infty$.
\end{lemma}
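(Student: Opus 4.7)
The plan is to build a radial function $u(x)=F(|x|)$, with $F$ absolutely continuous on $[0,\infty)$, vanishing on $[0,1]$, and with radial derivative $F'=g$ that is constant on each dyadic shell: $g(r)=g_i$ for $r\in[2^i,2^{i+1})$. Then $|\nabla u|(x)=g(|x|)$ a.e., so
\[
\int_{\R^d}|\nabla u|^p w\,dx=\sum_{i\in\N} g_i^p\,w(A_i),\qquad \lim_{|x|\to\infty}u(x)=\sum_{i\in\N} g_i\,2^i,
\]
and the problem reduces to producing nonnegative $g_i$ with $\sum_i g_i^p w(A_i)<\infty$ while $\sum_i g_i\,2^i=\infty$. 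This is essentially the converse of the H\"older step \eqref{eq:3.2-2409} used in Lemma \ref{lem3.1}, so the main task is to extract a divergent pairing from the abstract hypothesis $\Rp(w)=\infty$.

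For $p>1$, I would set $d_i:=2^i w(A_i)^{-1/p}$, so that $\sum_i d_i^{p/(p-1)}=\Rp(w)=\infty$, i.e.\ $(d_i)\notin \ell^{p/(p-1)}$. Duality in sequence spaces then yields nonnegative $(t_i)\in\ell^p$ with $\sum_i t_i d_i=\infty$, and the substitution $g_i:=t_i w(A_i)^{-1/p}$ converts these into the two desired conditions. A concrete realisation is to greedily partition $\N$ into consecutive blocks $B_k$ with $M_k:=\sum_{i\in B_k}d_i^{p/(p-1)}\ge 2^k$ and set $g_i:=\alpha_k d_i^{1/(p-1)}w(A_i)^{-1/p}$ on $B_k$ with $\alpha_k:=k^{-2/p}M_k^{-1/p}$; a short calculation then gives $\sum_i g_i^p w(A_i)=\sum_k k^{-2}<\infty$ and $\sum_i g_i\,2^i\ge \sum_k k^{-2/p}\,2^{k(p-1)/p}=\infty$. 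The case $p=1$ is simpler: $\Ri(w)=\infty$ provides a subsequence $(i_k)$ with $2^{i_k}/w(A_{i_k})\ge 2^k$, and concentrating the derivative at these scales by $g_{i_k}:=2^{k/2-i_k}$ and $g_i:=0$ otherwise gives $\sum_i g_i w(A_i)\le \sum_k 2^{-k/2}<\infty$ while $\sum_i g_i\,2^i=\sum_k 2^{k/2}=\infty$.

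Finally, I would verify that the resulting $u$ genuinely lies in $\Wdp(\R^d,w)$: $u$ is continuous (hence its own Lebesgue representative), locally bounded (hence in $L^p_{\loc}(\R^d,w)$), and since $F$ is absolutely continuous on $[0,\infty)$ its distributional gradient equals the classical radial one $g(|x|)$. The only substantive obstacle is the duality/blocking construction of $(g_i)$; once it is in hand, everything else is bookkeeping, and $\lim_{|x|\to\infty}u(x)=\infty$ is immediate from $\sum_i g_i\,2^i=\infty$.
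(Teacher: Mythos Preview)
Your proposal is correct and, at its core, is the same construction as the paper's: both pick $g$ constant on dyadic annuli, block the divergent series $\mathcal R_p(w)=\sum_i (2^i)^{p/(p-1)}w(A_i)^{1/(1-p)}$ into consecutive pieces of size at least $2^k$, and on each block take the H\"older-extremal profile $g_i\propto (2^i)^{1/(p-1)}w(A_i)^{1/(1-p)}$ with a suitable normalisation (your $\alpha_k=k^{-2/p}M_k^{-1/p}$ versus the paper's $S_k^{-1}$; both give a convergent $\sum_i g_i^p w(A_i)$ and divergent $\sum_i g_i 2^i$). The one difference is how $u$ is recovered from $g$: you take the radial primitive $u(x)=\int_0^{|x|}g(r)\,dr$, while the paper defines $u(x)=\inf_\gamma\int_\gamma g\,ds$ over rectifiable curves from the origin to $x$. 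Since $g$ here is radially symmetric and nonnegative, the infimum is attained by the radial ray and the two definitions coincide, so for this lemma your route is slightly more direct. The paper's curve-infimum definition, on the other hand, only needs the weaker conclusion $|\nabla u|\le g$ and is reused verbatim in Lemma~\ref{Syl} for a non-radial $g$, where a radial primitive would not be available.
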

\begin{proof} Let $A_i:=B(0,2^{i+1})\setminus B(0,2^i)$, $i\in\mathbb N$.

Since $\mathcal R_p(w)=\infty$, depending on the value of $p,$ there exists a sequence $\{a_k\}_{k\in\mathbb N}$ or $\{b_k\}_{k\in\mathbb N}$ with $a_k<a_{k+1}, b_k<b_{k+1}$, $\lim_{k\to\infty}{a_k}=\lim_{k\to\infty}{b_k}=\infty$ such that 
\begin{equation}\label{eq-3.4}
\sum_{i=a_k}^{a_{k+1}}(2^i)^{\frac{p}{p-1}}w^{\frac{1}{1-p}}(A_i)>2^k \text{\rm \ \ if $p>1$\ \ and \ \ } 2^{b_k}w^{-1}(A_{b_k})>2^k \text{\rm \ \ if $p=1$.}
\end{equation}

	Let
	\begin{equation}\notag
	\label{eq4.7}  g_p(x)=\sum_{k=1}^\infty \left (\sum_{i=a_{k}}^{{a_{k+1}}}\frac{(2^i)^{\frac{1}{p-1}}w^{\frac{1}{1-p}}(A_{i})}{\sum_{i=a_k}^{a_{k+1}} (2^i)^{\frac{p}{p-1}}w^{\frac{1}{1-p}}(A_{i}) } \chi_{A_{i}}(x)\right ) \text{\rm \  if $p>1$, and\ }
	g_1(x)=\sum_{k=1}^\infty 2^{-b_k}\chi_{A_{{b_k}}}(x).
	\end{equation}
	We define
	$u(x):=\inf \int_{\gamma_{0,x}} g_pds \text{\ \ \rm for $x\in \mathbb R^d$}
	$
	where the infimum is taken over all rectifiable curves $\gamma_{0,x}$ connecting the origin $0$ and $x$. Then $u$ is locally Lipschitz and $|\nabla u|\leq g_p$ almost everywhere  with respect to the Lebesgue measure and consequently also $\mu$-a.e. Let $N$ be arbitrary.
	By a similar argument as in \cite{KN21, MR4344318,KNW22},
	 we have that for all $x\in  \mathbb R^d$ with $|x|=N$,
	\begin{align*}
	u(x)=& \inf_{\gamma_{0,x}} \sum_{2^{a_{k+1}}\leq N}\sum_{i=a_k}^{a_{k+1}} \frac{(2^i)^{\frac{1}{p-1}}w^{\frac{1}{1-p}}(A_{i})}{\sum_{i=a_k}^{a_{k+1}} (2^i)^{\frac{p}{p-1}}w^{\frac{1}{1-p}}(A_{i}) } \int_{\gamma_{0,x}\cap A_{i}}ds 
	\gtrsim&  \sum_{2^{a_{k+1}}\leq N}\sum_{i=a_k}^{a_{k+1}} \frac{(2^i)^{\frac{1}{p-1}}w^{\frac{1}{1-p}}(A_{i})}{\sum_{i=a_k}^{a_{k+1}} (2^i)^{\frac{p}{p-1}}w^{\frac{1}{1-p}}(A_{i}) } 2^i
	\gtrsim N\end{align*} if $p>1$,
	and that
	$
	u(x)=\inf_{\gamma_{0,x}}\int_{\gamma_{0,x}}g_1ds = \inf_{\gamma_{0,x}} \sum_{2^{b_k}\leq  N} 2^{-b_k}\int_{\gamma_{0,x}\cap A_{{b_k}}}ds 
	\gtrsim  \sum_{2^{b_k}\leq N}2^{-b_k} 2^{b_k}\gtrsim N.
	$
	Hence
	$\lim_{|x|\to \infty}u(x)=\infty$. It suffices to prove that  $g_p\in L^p(\mathbb R^d,w)$. Using \eqref{eq-3.4}, we have that 
		\begin{equation}
	 \int_{\mathbb R^d}g_p^pd\mu =\sum_{k=1}^\infty \sum_{i=a_{k}}^{{a_{k+1}}}  \int_{A_{i}}\left (\frac{(2^i)^{\frac{1}{p-1}}w^{\frac{1}{1-p}}(A_{i})}{\sum_{i=a_k}^{a_{k+1}} (2^i)^{\frac{p}{p-1}}w^{\frac{1}{1-p}}(A_{i})}\right )^p d\mu\notag 
	= \sum_{k=1}^\infty \frac{1}{ \left (\sum_{i=a_k}^{a_{k+1}} (2^i)^{\frac{p}{p-1}}w^{\frac{1}{1-p}}(A_{i}) \right )^{p-1}} \leq \sum_{k=1}^\infty\frac{1}{2^{k(p-1)}} \notag
	\end{equation}if $p>1$, 
	and that
	$
	 \int_{\mathbb R^d}g_1d\mu =\sum_{k=1}^\infty \int_{A_{{b_k}}} 2^{-b_k}d\mu =\sum_{k=1}2^{-b_{k}}w(A_{{b_k}})\leq \sum_{k=1}^\infty\frac{1}{2^{k}}.\notag
	$ Then $g_p\in L^{p}(\mathbb R^d,w)$.
	The claim follows.
\end{proof}
\begin{proof}[Proof of Theorem \ref{thm:single limit}] The implication
$(1)\Rightarrow (2)$ is given by Theorem \ref{remark-c} and Lemma \ref{lem3.5}.
Furthermore, the implication $(2)\Rightarrow (1)$ is trivial. \end{proof}

\begin{proof}[Proof of Proposition \ref{thm:identify-c}] The claim follows because the existence of each of these limits is equivalent to $\mathcal R_p(w)<\infty$ by Corollary \ref{cor3.4} and Lemma \ref{lem3.5}.
\end{proof}

\begin{proof}[Proof of Theorem \ref{remark-c}] The implication $(1)\Rightarrow (2)$ is given by Lemma \ref{lem3.1} and the implication 
$(2)\Rightarrow (1)$ by Lemma \ref{lem3.5}.

The last claim is given by Lemma \ref{lem3.3}.
\end{proof}

\section{Counter-examples}\label{sec:examples}

Our counter-examples will involve the construction of certain bump-functions. We will need the following explicit $\A_p$-weights.

\begin{example}\label{ex:Apweight} Let $q,p\in [1,d)$ with $q\leq p$. Further fix $\alpha \in [0,(d-1)(q-1)),\beta\in [0,d-p)$ when $q>1$ and let $\alpha=0, \beta\in [0,d-1)$ for $q=1$. Set

\[
w(\overline{x},t) = \begin{cases} 2^{-(\alpha+\beta)i-1}(1+|\overline{x}|^\alpha) & \text{if\ \ }2^i \leq t \leq 2^{i+1}, |\overline{x}|\leq 2^{i}, i\in\mathbb N\bigcup\{0\} ; \\
\min\{|(\overline {x},t)|^{-\beta},1\} & \text{otherwise.} \end{cases}
\]

Then $w\in \A_q(\R^d)$ and $\Rp(w)<\infty$.
\end{example}

\begin{proof} 
Since $0< w(\overline {x},t) \leq 1$, we have that $w\in L^1_{\loc}(\R^d)$. Fix a ball $B=B((\overline{y},s),r) \subset \R^d$. 
Since the necessary computations in what follows are rather technical, we only sketch the main points and leave the details to the reader.
A simple case study gives

\[
\avint_B w dx  \leq \sup_{(\overline{x},t) \in B} w(\overline{x},t)  \lesssim \begin{cases} \min(1,r^{-\beta}) & \text{if \ \ }r\geq |(\overline{y},s)|/2;\\ 
\min(1,|(\overline{y},s)|^{-\beta}) & \text{if \ \ } r\leq |(\overline{y},s)|/2, s\leq 1 ;\\
|(\overline{y},s)|^{-\beta}  & \text{if \ \ } r\leq |(\overline{y},s)|/2, |\overline{y}|\geq s, s\geq 1; \\
|(\overline{y},s)|^{-\beta} s^{-\alpha}(|\overline{y}|+r)^\alpha & \text{if \ \ } r\leq |(\overline{y},s)|/2, |\overline{y}|\leq s, s\geq 1. \\
    \end{cases}
\]

We continue by estimating
\[
I = \left(\frac{1}{|B|} \int_B w^{-\frac{1}{q-1}}(x) dx\right)^{q-1}
\]
in the case $1<p<d.$
Again, one applies a case study. We begin with some pointwise estimates for $w^{-\frac{1}{q-1}}$. 
\begin{enumerate}
\item[\textbf{A:}] When $|(\overline{x},t)|\leq 1$, we have $w(\overline{x},t)=w^{-\frac{1}{q-1}}(\overline x,t) = 1$.
\item[\textbf{B:}] 
When $|\overline{x}|\leq t$ and $t\in [2^i,2^{i+1}]$ for some $i\geq 0$, we use the bound $w(\overline{x},t) \gtrsim 2|\overline{x}|^{\alpha}t^{-\beta-\alpha}$. 
\item[\textbf{C:}]  When $|\overline{x}|\geq t$ and $t\geq 2$ or when $t\in [0,2]$, we use the bound $w(\overline{x},t) = \min\{|(\overline{x},t)|^{-\beta},1\}$.
\end{enumerate}

We consider four different cases to estimate $I$, depending on the location of the center $(\overline{y},s)$ and the radius $r$ of the ball $B$.

\begin{enumerate}
\item \textbf{If $r\geq |(\overline{y},s)|/2$}, then  $B((\overline{y},s),r) \subset B((\overline{0},0), 4r).$ To estimate $I$ from above, it suffices to replace $B((\overline{y},s),r)$ with  $B((\overline{0},0), 4r).$ Divide the integration over $B((\overline{0},0),4r)$ to regions where A,B,C apply. 
Observe that
\[
\int_{\{\overline{x}\in \R^{d-1}: r/2 \leq |\overline {x}|\leq r\}} |\overline{x}|^{\frac{-\alpha}{q-1}} d\overline x \lesssim r^{d-1-\frac{\alpha}{q-1}}
\]
holds whenever $\alpha \in [0,(d-1)(q-1))$. Consequently, by Fubini's theorem, whenever $r>0$ 
\[
\int_{B(0,4r)} w^{\frac{-\alpha}{q-1}}(\overline{x},t) dx \lesssim r^{d-\frac{\alpha}{q-1}+\frac{\beta+\alpha}{q-1}}.
\]
This bound together with the bounds from A,B,C can be used to conclude that $I\lesssim \max(1,r^\beta).$
\item \textbf{The case $r\leq |(\overline{y},s)|/2$ and $s\leq 1$:} From the definition of $w$ and the bound C one obtains $\inf_{(\overline{x},t) \in B} w(\overline{x},t) \gtrsim \min\{1,|(\overline{y},s)|^{-\beta}\}$. Thus, $I \lesssim \max\{1,|(\overline{y},s)|^\beta\}$.
\item \textbf{If $r\leq |(\overline{y},s)|/2$, $|\overline{y}|\geq s$ and $s\geq 1$:} In this case one can use bounds B and C to show that $\inf_{(\overline{x},t) \in B} w(\overline{x},t) \gtrsim \min\{|(\overline{y},s)|^{-\beta},1\}$. Thus, $I \lesssim |(\overline{y},s)|^\beta$.
\item \textbf{If $r\leq |(\overline{y},s)|/2$ and $|\overline{y}|\leq s$ and $s\geq 1$:} Divide the integral $\int_B w^{-\frac{1}{q-1}}(\overline{x},t) dx$ to integrals over the regions with $2^{k}\leq t < 2^{k+1}$ for $k\in \N$, and possibly a portion with $|t| \leq 1$. For the first set use estimate A and for the latter use C. Integrating, with a similar bound as in case (1), and adding the obtained bounds again yields the estimate $I \lesssim |(\overline{y},s)|^\beta$.
\end{enumerate}

By combing the cases (1)--(4) with the estimate on the integral average of $w$ from the beginning of our proof we conclude that $w\in \A_q(\mathbb R^d)$. Towards $\Rp(w)<\infty$, let $A_k = \{(x,t) : 2^k < |(\overline{x},t)|\leq 2^{k+1}\}$. We have that $w(A_k) \gtrsim 2^{k(d-\beta)}.$ Hence

\begin{align*}
\Rp(w)  &= \sum_{k\in\mathbb N}(2^k)^{\frac{p}{p-1}}w^{\frac{1}{1-p}}(A_k)   
\lesssim \sum_{k=0}^\infty 2^{\frac{kp}{p-1}} 2^{\frac{(\beta-d)k}{p-1}} <\infty
\end{align*}
since $\beta < d-p$.

When $p=1$, we must have $\alpha=0$, and it is direct to establish the inequality in \eqref{eq:muckenhoupt-p-1} by estimating the minimimum of $w$ in $B$. The estimate for $\Ri$ follows directly by the restriction $\beta \in [0,d-1)$ and the definition in \eqref{eq:rpeq1}.
\end{proof}

Using these weights we can find examples of weighted Sobolev functions, which lack vertical limits -- even in a rough and average sense. 
In what follows, cubes in $\R^d$ will be written as $Q=Q(x,\ell(Q))=\prod_{i=1}^d [x_i-\ell(Q)/2,x_i+\ell(Q)/2]$, 
where $\ell(Q)>0$ is the edge length of $Q$ and $x=(x_1,\dots, x_d) \in \R^d$ is the center of the cube. We will say the cube is centered at $x$. 
If $Q=Q(x,\ell(Q))$, then, for $a>0$, $a Q=Q(x,a\ell(Q))$ is the cube with the same center and of 
edge length $a\ell(Q)$.
 If $(Q_i)_{i\in\mathbb N}$ is a sequence of cubes, we say that $Q_i$ go to infinity, or $Q_i \to \infty$, if for any $R>0$, there is a $N\in\mathbb N$ so that $Q_i \cap B(0,R) = \emptyset$ for all $i\geq N$.

We introduce a bump-function associated to a cube $Q.$
Let $\psi:\R\to \R$ be a function given by $\psi(x)=\min(1,\max(0,1-2|x|))$. Given a cube $Q=Q(x,\ell(Q)),$  define $\psi_Q(y) = \prod_{i=1}^d \psi((y_i-x_i)/\ell(Q)),$  where $y=(y_1,\dots, y_d) \in \R^d$. Then $\psi_Q$  is $2d\ell(Q)^{-1}$-Lipschitz, 
$\psi_Q(x)=0$ for $x\not\in Q$ and $\psi_Q(x)=1$ for $x\in \frac{1}{2}Q$.


\begin{example}\label{example:rough}  Suppose that $p\in[1,\infty)$. There exists a weight $w\in \A_p(\mathbb R^d)$ with $\Rp(w)<\infty$ and a function $u\in W^{1,p}(\R^d,w)$ and a sequence of cubes $(Q_i)_{i\in \N}$ with $Q_i \to \infty$, $\liminf_{i\to\infty} \ell(Q_i) >0,$ so that $\lim_{i\to\infty} u_{Q_i}$ does not exist. 

Indeed if $w \in \A_p(\mathbb R^d)$ is a weight for which there exists a sequence of cubes $Q_i$ with
\begin{enumerate}
\item $Q_i \to \infty,$ 
\item $\liminf_{i\to\infty} \ell(Q_i) >0,$
\item $\liminf_{i\to \infty} w(Q_i) = 0$,
\end{enumerate}
then, there exists $u\in W^{1,p}(\R^d,w)$ so that $\lim_{i\to\infty} u_{Q_i}$ does not exist. Further, if we have that $Q_i=Q\times [n_i,n_i+1]$ for some increasing sequence $(n_i)_{i\in \N}$, then $\lim_{t\to\infty} u(\overline{x},t)$ does not exist for any $\overline {x}\in Q$.

\end{example}

\begin{proof} 
We first prove the second claim. Assume that a weight $w\in \A_p$ and cubes $Q_i$, with $i\in\N$, exist as in the claim. 

  Pick a $\delta$ with $0<\delta < \liminf_{i\to\infty} \ell(Q_i)$. By passing to a subsequence, we may assume that $w(Q_i) \leq \frac{1}{i^2}$, that $2Q_i$ are pairwise disjoint and that $\ell(Q_i) \geq \delta$ for all $i\in \N$. Set $u(x) = \sum_{i=1}^\infty \psi_{Q_{2i}}$. Then $u$ is $\frac{3d}{\delta}$-Lipschitz and $|\nabla u| \leq \sum_{i=1}^\infty \frac{3d}{\delta} 1_{Q_{2i}}$.

Since $w$ is doubling, we have $\liminf_{i\to\infty}u_{Q_{2i}}>0$, but $\lim_{i\to\infty} u_{Q_{2i+1}} = 0$. Therefore, the limit does not exist. If $Q_i = Q \times [n_i,n_i+1]$, then $u(\overline{x},t)=1$ whenever $\overline{x}\in Q$ and $t\in [n_{2i},n_{2i}+1]$, and $u(\overline{x},t)=0$ whenever $\overline{x}\in Q$ and $t\in [n_{2i+1},n_{2i+1}+1]$. 
Thus, the limit $\lim_{t\to\infty} u(\overline{x},t)$ does not exist for any $\overline{x}\in Q$.

Next, we show that there exists a weight $w \in \A_p(\mathbb R^d)$ and cubes $Q_i$, with $i\in \N$, with properties $(1)-(3)$. This proves the first claim of the example.  The existence is given by Example \ref{ex:Apweight} with $\beta>0$. For that example, and any sequence 
$Q_i \to \infty,$ we have $\lim_{i\to \infty}w(Q_i)=0$. Hence, for any sequence of cubes of edge lengths bounded away from zero that tends to infinity one can find a Sobolev function for which the averages do not 
converge.

\end{proof}

  The previous examples justify the assumption $\inf_{\ell(Q)=1} w(Q) > 0$. This assumption together with $\Rp(w)<\infty$ and $w\in \A_p(\mathbb R^d)$ suffices for \emph{rough} limits to exist; see Theorem \ref{lisatty}. However, for vertical limits, e.g. Theorem \ref{thm:pekka-mizuta}, we need further assumptions, as the following example indicates. The idea is to place smaller jumps $\psi_{Q_i}$ with diameters going to zero. 

Before presenting the example, we need the following lemma which collects the crucial feature of our construction.

\begin{lemma}\label{lem:muckenhoupt} Suppose that $p\in[1,\infty)$ and that $w\in L^1_{\loc}(\R^d)$ satisfies the following. There is a sequence of cubes $Q_i$ with edge lengths $\ell_i=\ell(Q_i)\leq 1$ so that 
\begin{enumerate}
\item $d(Q_i,Q_j):=\inf \{|x-y| : x\in Q_i, y\in Q_j\} \geq 1$, for distinct $i,j\in \N$;
\item For each $x\not\in \bigcup_{i\in \N} Q_i$, we have $w(x)=1$; 
\item For each $i\in \N$, the weight $w_i$ defined by $w_i =w1_{Q_i} + 1_{\R^d\setminus Q_i}$ belongs to $\A_p(\mathbb R^d),$ with $\A_p$-constant $C$ (independent of $i$). 
\end{enumerate}
Then $w\in \A_p(\mathbb R^d).$
\end{lemma}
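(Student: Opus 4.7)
The plan is to verify the $\A_p$ condition for $w$ on an arbitrary ball $B=B(x_0,r)\subset\R^d$ by a case analysis on the radius $r$. When $r\le 1/2$, the hypothesis $d(Q_i,Q_j)\ge 1$ for distinct $i,j$ forces $B$ (of diameter at most one) to meet at most one cube $Q_{i_0}$. On such a ball the weight $w$ coincides with $w_{i_0}$: for $x\in B\cap Q_{i_0}$ this is the definition of $w_{i_0}$, and for $x\in B\setminus Q_{i_0}$ both equal $1$ by hypothesis (2) since $B$ meets no other cube. The $\A_p$ inequality for $w$ on $B$ thus reduces to the uniform $\A_p$ inequality for $w_{i_0}$, with constant $C$.

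For $r\ge 1/2$ the strategy is to establish the separate integral bounds $\int_B w\,dx\lesssim |B|$ and $\int_B w^{-1/(p-1)}\,dx\lesssim |B|$ and then multiply. Both rely on a per-cube bound extracted from the hypothesis. For each $Q_i$ with center $c_i$, take the capture ball $B_i:=B(c_i,2\sqrt d)$; since $\ell(Q_i)\le 1$ we have $Q_i\subset B_i$ and $|B_i\setminus Q_i|\ge c_d>0$ for some dimensional constant. Applying the uniform $\A_p$ condition for $w_i$ on $B_i$ and using $w_i=w$ on $Q_i$ together with $w_i\equiv 1$ on $B_i\setminus Q_i$ gives (for $p>1$)
\[
\Bigl(\int_{Q_i} w\,dx + c_d\Bigr)\Bigl(\int_{Q_i} w^{-\frac{1}{p-1}}\,dx + c_d\Bigr)^{p-1}\lesssim 1
\]
(with the analog $\int_{Q_i} w\,dx + c_d\lesssim 1$ when $p=1$, using that $w_i=1$ on a positive-measure piece of $B_i$). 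Since each factor is bounded below by $c_d$, one reads off absolute bounds $\int_{Q_i} w\,dx\le C_1$ and $\int_{Q_i} w^{-1/(p-1)}\,dx\le C_2$, with $C_1,C_2$ depending only on $d,p,C$.

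Next, one counts the cubes. Because $d(Q_i,Q_j)\ge 1$, the centers satisfy $|c_i-c_j|\ge 1$, so the balls $B(c_i,1/2)$ for $i\in I:=\{i:Q_i\cap B\ne\emptyset\}$ are pairwise disjoint; each such center lies in $B(x_0,r+\sqrt d/2)$, so these small balls fit inside $B(x_0,r+\sqrt d/2+1/2)\subset B(x_0,Cr)$ using $r\ge 1/2$. Hence $|I|\lesssim r^d\lesssim |B|$. Using $w\equiv 1$ off the cubes,
\[
\int_B w\,dx \le |B| + \sum_{i\in I}\int_{Q_i} w\,dx \le |B| + C_1|I|\lesssim |B|,
\]
and similarly $\int_B w^{-1/(p-1)}\,dx\lesssim |B|$. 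Multiplication yields the $\A_p$ inequality for $w$ on $B$ with a constant depending only on $d,p,C$. For the $\A_1$ case one additionally needs the pointwise bound $w\ge 1/C$ a.e.; this follows by applying the $\A_1$ condition for $w_i$ on a ball $B(c_i,R)$ containing $y\in Q_i$ and letting $R\to\infty$: since $\int_{Q_i} w<\infty$ and $w_i\equiv 1$ outside $Q_i$, the average $\avint_{B(c_i,R)} w_i$ tends to $1$, forcing $w(y)\ge 1/C$ for a.e.\ $y\in Q_i$; combined with $\avint_B w\lesssim 1$ this gives the $\A_1$ condition.

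The main obstacle is the per-cube extraction step: the capture ball $B_i$ must be chosen large enough that $|B_i\setminus Q_i|$ is bounded below by a dimensional constant, so that the product $\A_p$ condition for $w_i$ (whose value on the complementary piece is the known constant $1$) truly separates into individual upper bounds on $\int_{Q_i} w$ and $\int_{Q_i} w^{-1/(p-1)}$, rather than a product that could be satisfied by factors of arbitrary sizes.
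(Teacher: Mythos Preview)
Your proof is correct and follows the same two-case structure as the paper's: small balls meet at most one $Q_i$ so $w|_B=w_i|_B$, while for large balls one bounds $\int_B w$ and $\int_B w^{-1/(p-1)}$ separately by $\lesssim |B|$ using uniform per-cube bounds and then summing.

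The only notable difference is in how the per-cube bounds are extracted. The paper invokes the doubling property of each $w_i$ (quoted from its Theorem on Muckenhoupt weights): reflecting $Q_i$ across a face to a cube $\hat Q_i$ on which $w_i\equiv 1$ gives $w_i(Q_i)\approx w_i(\hat Q_i)=\ell_i^d$, and then the $\A_p$ condition on $Q_i$ yields $\int_{Q_i} w^{-1/(p-1)}\lesssim \ell_i^d$. These scale-dependent bounds are summed via $\sum_{i\in I}\ell_i^d\le |2(1+\sqrt d)B|$ (disjointness). You instead apply the $\A_p$ condition for $w_i$ on a \emph{fixed-size} capture ball $B_i=B(c_i,2\sqrt d)$ and use that $w_i\equiv 1$ on the substantial complement $B_i\setminus Q_i$ to pry apart the product inequality into two absolute bounds $\int_{Q_i}w\le C_1$, $\int_{Q_i}w^{-1/(p-1)}\le C_2$; you then compensate with a packing count $|I|\lesssim r^d$. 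Your route is slightly more self-contained (it does not appeal to doubling as a black box), while the paper's gives the marginally sharper bound $w(Q_i)\lesssim \ell_i^d$, though this extra sharpness is not needed for the lemma.
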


\begin{proof}
Recall that the $\A_p$-conditions \eqref{eq:muckenhoupt} and \eqref{eq:muckenhoupt-p-1} involve estimates for balls $B=B(x,r)$. 

First, by Theorem \ref{thm:muckenhouppi}, there exists a constant $D$ so that for every $i\in \N$ the weight $w_i$  is $D$-doubling. If $\hat{Q}_i$ is a cube obtained from $Q_i$ by reflecting it through one of its faces, then
\begin{equation}\label{eq:wibound_1}
 w_i(Q_i) \lesssim w_i(\hat{Q}_i) \leq \ell_i^d.
\end{equation}
By the $\A_p$-condition for each $w_i$, we also have when $p>1$ the estimate
\begin{equation}\label{eq:wibound_2}
\left(\frac{1}{|Q_i|}\int_{Q_i} w_i^{\frac{1}{1-p}}\right)^{p-1} \lesssim  1.
\end{equation}
When $p=1$, we get $w(y) \gtrsim 1$ for a.e. $y\in Q_i$. There are now two cases to consider in verifying the $\A_p$-conditions for $w$.

\begin{enumerate}
\item If $B \cap Q_i \neq \emptyset$ for at most one $i\in \N$, then $w|_B=w_i|_B$, and the $\A_p$-condition follows from that of $w_i$.
\item If $B \cap Q_i \neq \emptyset$ for more than one $i \in \N$, then by the separation condition $r\geq 2^{-1}$. Let $I \subset \N$ be the set of those indices $i$ for which $B \cap Q_i \neq \emptyset$. Since $\ell_i \leq 1$, we have $Q_i \subset 2(1+\sqrt{d}) B$ for each $i\in I$. Thus \eqref{eq:wibound_1} and the properties of $w$ give
\begin{equation}\label{ylaraja}
\frac{1}{|B|} \int_B w dx \leq \frac{1}{|B|} \int_{Q \setminus \bigcup_{i\in \N} Q_i} 1 dx + \frac{1}{|B|}\sum_{i\in I} w(Q_i) \lesssim 1. 
\end{equation}
When $p>1$, we argue similarly, using \eqref{eq:wibound_2} instead of \eqref{eq:wibound_1} to conclude that
\[
\frac{1}{|B|} \int_B w^{\frac{1}{1-p}} dx \leq \frac{1}{|B|} \int_{2B \setminus \bigcup_{i\in \N} Q_i} 1 dx + \frac{1}{|B|}\sum_{i\in I} \int_{Q_i} w^{\frac{1}{1-p}} \lesssim 1. 
\]
The desired inequality follows.
When $p=1$, we have $w(y) \geq 1$ for $y\not\in Q_i$, and $w(y) \geq w_i(y) \gtrsim 1$ when $y\in Q_i.$ In either case, we obtain the $\A_p$-conditions via \eqref{ylaraja}.
\end{enumerate}
\end{proof}

In the following, notice that $\frac{p+d-1}{d}< p$
whenever both $p>1$ and $d\geq 2$ hold.

\begin{example}\label{ex:mainexample} For all $q,p\in (1,d)$ with $\frac{p+d-1}{d} < q\leq p$, there exists  $w\in \A_q(\mathbb R^d)$ which satisfies \[\inf_{\ell(Q)=1} w(Q) > 0,\] which has $\Rp(w)<\infty$ and which satisfies the following. There exists a function $u\in \dot W^{1,p}(\R^d, w)$ so that for a.e. $\overline{x}\in \R^{d-1}$ the limit $\lim_{t\to\infty} u(\overline{x},t)$ fails to exist.  
\end{example}

\begin{proof} First, we construct a sequence of ``small'' cubes $\{\hat{Q}_i\}_{i\in \N}$ in $\R^{d}$ with $\ell(\hat{Q}_i) \leq 1/2$, of pairwise distance at least $2$, with  $\lim_{i\to\infty} \ell(\hat{Q}_i)=0$, $\hat{Q}_i \to \infty$, and so that their projections cover a.e point of $\R^{d-1}$ infinitely often\footnote{With some more work, one could also construct a sequence of cubes $Q_i$ so that \emph{every} $\overline{x}$ would be covered by the projections of $Q_i$ for infinitely many $i\in \N$.}. Then, we construct a weight $w \in \A_p(\mathbb R^d)$ with $\Rp(w)<\infty$ and 
\begin{equation}\label{eq:sum_wbound}
\sum_{i=1}^\infty w(2\hat{Q}_i)\ell(\hat{Q}_i)^{-p} < \infty.
\end{equation}
 The function $u=\sum_{i=1}^\infty \psi_{2\hat{Q}_i}$ will then serve as the desired counter-example.

Consider the Gaussian probability measure $P$ on $\R^{d-1}$ given by $dP=\frac{e^{-\frac{|x|^2}{2}}}{(2\pi)^{\frac{d-1}{2}}}dx$. Let $\ell_n = \frac{1}{2n^{\frac{1}{d-1}}}$.  Choose a random sequence $\{\overline{x_i}\}_{i\in \N}$ so that each $\overline{x_i} \in \R^{d-1}$ is chosen independently and with distribution $P$. Define $\hat{x}_i = (\overline{x_i}, 4i) \in \R^d$. A straightforward calculation using Borel-Cantelli shows that, almost surely, the cubes $\hat{Q}_i=Q(\hat{x}_i, \ell_i)$ satisfy the property that a.e. $\overline{x} \in \R^{d-1}$ is covered by infinitely many of the projections  onto $\R^{d-1}$ of $\hat{Q}_i$.

Next, let \[ w(x) = \min\{ 1,\inf_{i\in \N} |x-\hat{x}_i|^\alpha \}\] and fix $\alpha \in (p-1,d(q-1))$, which is possible since $\frac{p-1}{d}<q-1$. Since for any $x\in \R^d$ the only terms contributing to the infimum come from those $\hat{x}_i$ that are contained in $B(x,1)$, it is straightforward to show that the latter infimum is actually a minimum. 

First, that $w\in \A_q(\mathbb R^d)$ follows from Lemma \ref{lem:muckenhoupt} using the cubes $Q_i = Q(\hat{x}_i,1)$ since $\alpha \in [0,d(q-1))$. In this case, a fairly direct and classical calculation shows that $w_i(x) = \min(1, |x-\hat{x}_i|^\alpha)$ is an $\A_p$-weight with constant independent of $i\in\N$. 
The $\A_p$-conditions for $w_i$ can be verified via a case study involving integration over polar coordinates. 

Also, for $A_k=\{x\in\mathbb R^d : 2^{-k} <|x|\leq 2^{k+1}\}$, we have $w(A_k) \gtrsim 2^{kd}$.
Thus, the requirement $\Rp(w) <\infty$ follows from the definition.

The condition \[\inf_{\ell(Q)=1} w(Q) > 0\] follows from the observation that each cube $Q$ with $\ell(Q)=1$, we have $w \geq 8^{-\alpha}$ for at least half of the volume of $Q$, since $Q$ can intersect at most one ball $B(\hat{x}_i, 1/8)$. This ball can cover at most a half of the volume, and outside  it $w\geq 8^{-\alpha}$.

Finally, we verify \eqref{eq:sum_wbound}:

\begin{equation}
\sum_{i=1}^\infty w(2\hat{Q}_i)\ell(\hat{Q}_i)^{-p} \lesssim \sum_{i=1}^\infty \ell(\hat{Q}_i)^{\alpha + d}\ell(\hat{Q}_i)^{-p} \lesssim \sum_{i=1}^\infty \frac{1}{i^{\frac{d+\alpha-p}{d-1}}} < \infty,
\end{equation}
since $\alpha +d-p > d-1$ holds whenever $\alpha>p-1$. 
\end{proof}

We close this section with an example of a product weight $w_{P}$, and of a radial weight $w_R$, for which radial limits exist but no vertical limits exist.

\begin{example} Suppose that $p \in [1,d)$. Let $w_P(x,y)=\min(1,y^{-\alpha})$, $\alpha \in (0,\min(1,d-p))$, and $w_R(x)=\min(1,|x|^{-\alpha})$ with $\alpha \in (0,d-p)$. Then, $\Rp(w)<\infty$ for $w\in \{ w_P, w_R\}$,  and there exists a $u \in \dot W^{1,p}(\R^d,w)$ so that for all $\overline{x} \in \R^{d-1}$ the limit $\lim_{t\to\infty} u_(\overline{x},t)$ fails to exist.
\end{example}

\begin{proof} The weights $w_P$ and  $w_R$ are in $\A_p(\mathbb R^d)$ for all $\alpha \in (0,1)$ and $\alpha\in (0,d)$, respectively; see e.g. the proof of \cite[Theorem 1.1.]{BILTV}. Next, $w_P(A_k) \sim w_R(A_k) \sim 2^{k(d-\alpha)}$, where $A_k=\{x\in \R^d : 2^k \leq |x|\leq 2^{k+1}\}$. Thus, by definition,  whenever $d-\alpha>p$, we have $\Rp(w)<\infty$.

Choose $\beta \in (0,\min(\alpha/(d+p),1))$. 
Let $Q_i = Q((\overline{0},2^i), 2^{\beta i})$ for $i\geq 2$, and let $u = \sum_{i=2}^\infty \psi_{2Q_i}$. Then $\int_{\mathbb R^d} |\nabla u|^p wdx \lesssim \sum_{i=1}^\infty 2^{\beta (d+p) i}2^{-\alpha i}$. Since every $\overline{x} \in \R^{d-1}$ belongs to all but finitely many of the projections of $Q_i$, we have that no vertical limit exists for $u$. 
\end{proof}

\section{Vertical Limits}\label{sec:vertical-limit}

In this section, we discuss the case for vertical limits. We will divide this into four parts: first rough averages, then pointwise limits and finally the cases of product and radial weights.
\subsection{Rough averages}

Before embarking on the proof we record a conclusion regarding rough average limits.

\begin{lemma}\label{lem:w-average} Let 
$C>2$ and $p\in[1,\infty)$.
Let $Q_i=Q(x_i,\ell_i)$ be a sequence of cubes with $Q_i \to \infty$ and so that 
$\sqrt{d}\ell_i/2\leq |x_i|\leq C\ell_i$.
 If $\Rp(w)<\infty$, then for all $u\in \dot W^{1,p}(\R^d,w)$ we have

$$\lim_{i\to\infty} u_{Q_i}=c$$
where $c$ is the unique almost sure radial limit of $u$.
\end{lemma}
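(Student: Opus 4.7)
The plan is to show $u_{Q_i}\to c$ by combining two estimates---a weighted one comparing $u$ to its weighted mean $u_{Q_i}$, and an unweighted one comparing $u$ to $c$---then reconciling them via an $\A_p$-comparison between Lebesgue and $w$-measures. To set up, let $d_i:=|x_i|-\sqrt{d}\ell_i/2$, the distance from the origin to $Q_i$; the hypotheses $Q_i\to\infty$ and $\sqrt{d}\ell_i/2\leq|x_i|\leq C\ell_i$ force $d_i\to\infty$ and $\ell_i\sim|x_i|\to\infty$. Let $B_i:=B(x_i,\sqrt{d}\ell_i/2)\supset Q_i$; by doubling, $w(Q_i)\sim w(B_i)$, and $B_i\subset\R^d\setminus B(0,d_i)$.

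The first (weighted) estimate is $\avint_{Q_i}|u-u_{Q_i}|^p\, d\mu\to 0$. To see this, apply the $p$-Poincar\'e inequality from Theorem \ref{thm:muckenhouppi} on $B_i$, giving
\[
\avint_{Q_i}|u-u_{Q_i}|^p\, d\mu\lesssim \ell_i^p\avint_{B_i}|\nabla u|^p\, d\mu.
\]
Using $1/w(B_i)\lesssim|B_i|^{-p}\bigl(\int_{B_i}w^{1/(1-p)}\, dx\bigr)^{p-1}$ (from the $\A_p$-condition) together with $|x|\lesssim\ell_i$ on $B_i$, the algebra (as in the proof of Lemma \ref{lem3.1}) collapses to
\[
\ell_i^p\avint_{B_i}|\nabla u|^p\, d\mu\lesssim I_{d_i}^{p-1}\,\|\nabla u\|^p_{L^p(\R^d\setminus B(0,d_i),w)},
\]
where $I_{d_i}:=\int_{\R^d\setminus B(0,d_i)}|x|^{-p(d-1)/(p-1)}w^{1/(1-p)}\, dx$. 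Since $\Rp(w)<\infty$, $I_{d_i}\to 0$, and the tail norm $\|\nabla u\|^p_{L^p(\R^d\setminus B(0,d_i),w)}\to 0$ by integrability. (For $p=1$, a parallel argument uses $\Ri(w)$ and the $\A_1$-condition.) The second (unweighted) estimate, $\avint_{Q_i}|u-c|\, dx\to 0$, follows by noting $Q_i\subset A_i:=B(0,2|x_i|)\setminus B(0,d_i)$ with $|A_i|\sim|Q_i|\sim|x_i|^d$, decomposing $A_i$ into dyadic annuli $B(0,r)\setminus B(0,r/2)$, applying Lemma \ref{lem3.3} to each term, and splitting the sum at a threshold where the $L^p$-tail of $|\nabla u|$ is small.

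To combine these, fix $\epsilon>0$. Chebyshev on the weighted estimate gives $w\bigl(\{|u-u_{Q_i}|>\epsilon\}\cap Q_i\bigr)<\epsilon\, w(Q_i)$ for $i$ large. The $\A_p$-consequence $|E|/|Q_i|\lesssim(w(E)/w(Q_i))^{1/p}$ (which follows by applying H\"older to $|E|=\int_E w^{1/p}w^{-1/p}\, dx$ and bounding $\int_{Q_i}w^{1/(1-p)}$ via $\A_p$) upgrades this to
\[
\bigl|\{|u-u_{Q_i}|\leq\epsilon\}\cap Q_i\bigr|\geq(1-C\epsilon^{1/p})|Q_i|.
\]
Chebyshev on the unweighted estimate gives $\bigl|\{|u-c|\leq\epsilon\}\cap Q_i\bigr|\geq(1-\epsilon)|Q_i|$. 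For $\epsilon$ small these two large sets must intersect, and at any common point $y$ the triangle inequality yields $|u_{Q_i}-c|\leq|u_{Q_i}-u(y)|+|u(y)-c|\leq 2\epsilon$. Since $\epsilon>0$ was arbitrary, $u_{Q_i}\to c$. The main technical obstacle is precisely this Lebesgue-to-$w$-measure bridge, crucially enabled by the $\A_p$-hypothesis---without it, the weighted in-mean convergence from Poincar\'e does not transfer to control against the pointwise limit $c$.
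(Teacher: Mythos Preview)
Your overall strategy differs from the paper's and is elegant: rather than invoking Egorov's theorem on the radial convergence (as the paper does, by passing through a weighted average over a cone $E_i\subset\tilde{A_i}$ of ``good'' directions), you use Lemma~\ref{lem3.3} to obtain unweighted $L^1$-convergence of $u$ to $c$ over $Q_i$ and then bridge to the weighted average $u_{Q_i}$ via the $\A_p$-structure. This is a legitimate alternative route.

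However, the claim that $d_i:=|x_i|-\sqrt{d}\,\ell_i/2\to\infty$ is false in general. The hypothesis $Q_i\to\infty$ says the \emph{cube} escapes every ball, but the circumscribed ball $B_i=B(x_i,\sqrt{d}\,\ell_i/2)$ need not: for instance, in $\R^2$ with $C>2$, take $\ell_i=i$ and $x_i=(\sqrt{2}\,i/2+1,0)$; then $d_i=1$ for every $i$, while $d(0,Q_i)=(\sqrt{2}-1)i/2+1\to\infty$. Consequently your bound $I_{d_i}^{p-1}\|\nabla u\|^p_{L^p(\R^d\setminus B(0,d_i),w)}$ for the first estimate need not tend to zero. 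The repair is simple: skip the $I_{d_i}$ detour and observe instead that
\[
\ell_i^p\avint_{B_i}|\nabla u|^p\,d\mu\leq\frac{\ell_i^p}{w(B_i)}\,\|\nabla u\|^p_{L^p(\R^d,w)},
\]
with $\ell_i^p/w(B_i)\to 0$ following directly from $\Rp(w)<\infty$ together with doubling (since $w(B_i)\gtrsim w(A_k)$ for the dyadic annulus $A_k$ at scale $2^k\sim\ell_i$, and finiteness of $\Rp(w)$ forces $2^{kp}/w(A_k)\to 0$). This is essentially what the paper does in its first step. Your second estimate and the combining step survive even with $d_i$ bounded: in the dyadic sum over $A_i$ the finitely many inner annuli contribute a fixed amount, which is crushed by the factor $1/|Q_i|\to 0$.
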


\begin{proof}There exist constants $C_1>0,C_2>0$ independent of $i\in\mathbb N$ such that $Q_i\subset \tilde{A_i}$ where $\tilde{A_i}:=\{ x\in\mathbb R^d: C_1 \ell_i\leq |x|\leq C_2\ell_i\}$. 
Then there is a constant $C>0$ such that 

$$|u_{\tilde{A_i}}-u_{Q_i}| \lesssim C\ell_i \left(\avint_{\tilde{A_i}} |\nabla u|^p d\mu \right)^{\frac{1}{p}}= C \frac{\ell_i}{w(\tilde{A_i})^{\frac{1}{p}}}  \left(\int_{\tilde{A_i}} |\nabla u|^p d\mu\right)^{\frac{1}{p}}.$$ 
Here we use the uniform $p$-Poincar\'e inequality as in \cite{sobolevmet} for the John domains $\tilde{A_i}$.

Note that $\Rp(w) <\infty$, $\tilde{A_i} \subset \{x\in \R^d : |x| \geq C_1\ell_i \}$ and $\lim_{i \to \infty} \ell_i = \infty$. {Then, $\lim_{i\to\infty}l_i/w^{\frac{1}{p}}(\tilde{A_i})=0$ by a similar argument as in the proof of Lemma \ref{lem3.1}. This, together with the fact that $|\nabla u| \in L^p(\R^d,w)$, shows that the right-hand side  converges to $0$ when $i\to \infty$.} That is, $\lim_{i\to\infty} |u_{\tilde{A_i}}-u_{Q_i}| = 0$. It thus suffices to prove that $\lim_{i\to \infty} u_{\tilde{A_i}}=c$.

Fix next an $\epsilon>0$. By Lemma \ref{lem3.1}, for a.e. $\xi \in \mathbb S^{d-1}$, we have $\lim_{r\to\infty} u(r\xi) = c$. Then, by Egorov, there exists a set $F \subset \mathbb S^{d-1}$ with $\mathcal{H}^{d-1}(F)\geq \frac{\mathcal{H}^{d-1}(\mathbb S^{d-1})}{2}$ and an $i_0$ so that for all  $i\geq i_0$ and for all $r\in [\ell_{i}/2,2C\ell_{i}]$ and all $\xi \in F$ we have $|u(\xi r)-c|\leq \epsilon.$ 

Define a sequence of sets by $E_{i}=\{r\xi : r\in [\ell_{i}/2,2C\ell_{i}], \xi \in F\}$ and notice that  $E_{i_0}\subset\tilde{A}_{i_0}$ by construction. Then, $\mathcal{H}^{d}(E_{i}) \geq 2^{-1}\mathcal{H}^{d}(\tilde{A_i})$. Since $w\in \A_p(\mathbb R^d)$, H\"older's inequality implies that there exists a constant $\delta>0$ so that $w(E_i)\geq \delta w(\tilde{A_i})$ for all $i\in\N$ with $i\geq i_0$; see \cite[Chapter V]{Stein} for details. 

In particular, the $p$-Poincar\'e inequality implies that 

$$|u_{E_{i}} - u_{\tilde{A_i}}|\lesssim_\delta C\ell_i \left(\avint_{\tilde{A_i}} |\nabla u|^p d\mu \right)^{\frac{1}{p}}= C \frac{\ell_i}{w(\tilde{A_i})^{\frac{1}{p}}} \left(\int_{\tilde{A_i}} |\nabla u|^p d\mu\right)^{\frac{1}{p}}.$$
Again, we obtain that $\lim_{i\to\infty} |u_{E_i}-u_{\tilde{A_i}}| = 0$. However, $|u_{E_i}-c|\leq \epsilon$ for all $i\geq i_0$. Thus, $$\limsup_{i\to\infty} |u_{\tilde{A_i}}-c| \leq \epsilon.$$ Since $\epsilon>0$ is arbitrary, the claim follows.

\end{proof}

\begin{remark}\label{rmk:Rpcomparison}
Let $a\in \R^d$, and define the translated weight $w^a(y) \defeq w(y-a)$. The quantity $\Rp(w)$ is not translation invariant, and thus $\Rp(w)$ may be different from $\Rp(w^a)$. However, these quantities are comparable, since $w$ is a doubling weight by Theorem \ref{thm:muckenhouppi}. Indeed, let $A_i = \{x\in \R^d : 2^i \leq |x| \leq 2^{i+1}\}$ and $A_i^a = \{x \in \R^d : 2^i \leq |x-a| \leq 2^{i+1}\}$. Let $i_0 \in \N$ be chosen so that $|a|\leq 2^{i_0}$ and $i_0 \geq 1$. Then, for each $i\in\N$ we have $w(A_i^a) \gtrsim c_D^{i_0+5} w(B(0, 2^{i+1})) \geq c_D^{i_0+5} w(A_i)$. From this, and the definition of $\Rp(w)$ we get a constant $C_{i_0}$ so that
\[
\Rp(w^a) \leq C_{i_0} \Rp(w), \text{ for all } a \in B(0,2^{i_0}).
\]
\end{remark}

For the following lemma, we introduce the notion of a half-space. Given $t>0$, define the half-space $H_t=\{(\overline{x},t) \in \R^d : t >0\}$.

\begin{lemma}\label{lem:av-est} Assume $p\in[1,\infty)$. Let $w\in \A_p(\mathbb R^d)$ with $\Rp(w)<\infty.$ Suppose that $u \in \dot W^{1,p}(\R^d, w)$. Let $Q\subset \R^{d-1}$ be a cube centred at $\overline{x}\in\mathbb R^{d-1}.$ Given $t>0$, set $\tilde{Q}=Q \times [t,t+\ell(Q)].$
If $p>1,$ then

\[
\avint_{\tilde{Q}}|u-c|d\mu \lesssim_{\ell(Q)} \ell(Q)\left(\int_{H_{t}} |\nabla u|^p wdx\right)^{\frac{1}{p}}  (\Rp(w^{(\overline{x},t)}))^{\frac{p-1}{p}},
\]
where $c$ is the unique almost sure radial limit. When $p=1$, the same bound  holds with $\mathcal R_1(w)$ replacing $(\Rp(w^{(\overline{x},t)}))^{\frac{p-1}{p}}$.
\end{lemma}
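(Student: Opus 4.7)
The plan is to split
\[
\avint_{\tilde Q}|u-c|\,d\mu\ \le\ \avint_{\tilde Q}|u-u_{\tilde Q}|\,d\mu + |u_{\tilde Q}-c|
\]
and treat the two summands separately. The first piece is controlled by the uniform $p$-Poincar\'e inequality of Theorem \ref{thm:muckenhouppi} on $\tilde Q$: it contributes at most $\ell(Q)\bigl(\avint_{\tilde Q}|\nabla u|^p\,d\mu\bigr)^{1/p}$, which fits inside the stated bound after absorbing an $\ell(Q)$-dependent factor coming from $w(\tilde Q)^{-1/p}$ and enlarging the gradient integration domain from $\tilde Q$ to $H_t$.

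The essential term is $|u_{\tilde Q}-c|$. To bound it, I would chain through a sequence of cubes $(C_k)_{k\ge 0}\subset H_t$ marching upward: $C_k$ has edge $2^k\ell(Q)$, $\R^{d-1}$-centre at $\overline x$, and lower face at height $t+2^k\ell(Q)$. Any two consecutive cubes $C_k, C_{k+1}$ lie in a common enlargement $D_k\subset H_t$ of edge $\sim 2^k\ell(Q)$, and the family $(D_k)$ has bounded overlap. The $p$-Poincar\'e inequality on each $D_k$, combined with doubling of $w$, gives the chaining estimate
\[
|u_{C_k}-u_{C_{k+1}}|\ \lesssim\ 2^k\ell(Q)\,w(D_k)^{-1/p}\Bigl(\int_{D_k}|\nabla u|^p\,d\mu\Bigr)^{1/p},
\]
and an analogous estimate connects $u_{\tilde Q}$ with $u_{C_0}$. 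The ratio $|x_k|/\ell(C_k)$ stays bounded for large $k$, so Lemma \ref{lem:w-average} yields $\lim_k u_{C_k}=c$; telescoping then gives
\[
|u_{\tilde Q}-c|\ \lesssim\ \ell(Q)\sum_{k\ge 0}2^k\,w(D_k)^{-1/p}\Bigl(\int_{D_k}|\nabla u|^p\,d\mu\Bigr)^{1/p}.
\]

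A single H\"older application on the series, together with the bounded overlap $\sum_k 1_{D_k}\lesssim 1$, converts this into
\[
|u_{\tilde Q}-c|\ \lesssim\ \ell(Q)\Bigl(\int_{H_t}|\nabla u|^p\,d\mu\Bigr)^{1/p}\Bigl(\sum_{k\ge 0}(2^k)^{p'}w(D_k)^{1-p'}\Bigr)^{1/p'},
\]
where $p'=p/(p-1)$. Since $D_k$ has edge $\sim 2^k\ell(Q)$ and lies at distance $\sim 2^k\ell(Q)$ from $y_0:=(\overline x,t)$, doubling identifies $w(D_k)$ with the $w$-mass of the annulus $\{y:2^k\ell(Q)\le |y-y_0|\le 2^{k+1}\ell(Q)\}$. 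Re-indexing matches the last factor, up to a multiplicative $\ell(Q)^{-1}$, with $\Rp^{1/p'}$ of the weight translated so that $y_0$ becomes the origin, and Remark \ref{rmk:Rpcomparison} replaces this by $\Rp(w^{(\overline x,t)})$ at the cost of an $\ell(Q)$-dependent constant. For $p=1$ the same chain applies, with the $L^\infty$-bound inherent in $\Ri(w)$ playing the role of H\"older's inequality.

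The main technical point to check carefully is the convergence $\lim_k u_{C_k}=c$: although this is morally a direct consequence of Lemma \ref{lem:w-average}, its hypotheses $\sqrt d\,\ell(C_k)/2\le |x_k|\le C\ell(C_k)$ must be verified uniformly in $k$, which dictates precisely how the upward chain $(C_k)$ should be placed in $H_t$; any finitely many initial terms that fail this test can be absorbed into the $\ell(Q)$-dependent constant.
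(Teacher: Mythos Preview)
Your proposal is correct and follows essentially the same route as the paper: both build a dyadic chain of cubes in $H_t$ based at $(\overline x,t)$, invoke Lemma~\ref{lem:w-average} to secure $u_{C_k}\to c$, telescope with Poincar\'e and doubling, and finish with H\"older to extract the factor $\bigl(\sum_k (2^k)^{p'}w(D_k)^{1-p'}\bigr)^{1/p'}$. The paper's cubes $Q_i=2^iQ\times[t+2^i\ell(Q),t+2^{i+1}\ell(Q)]$ and enlargements $\hat Q_i$ are exactly your $C_k$ and $D_k$; the only cosmetic difference is that the paper absorbs your initial Poincar\'e term $\avint_{\tilde Q}|u-u_{\tilde Q}|\,d\mu$ directly into the chain rather than splitting it off. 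One small point: your appeal to Remark~\ref{rmk:Rpcomparison} at the end is unnecessary, since the translation you obtain is already by $(\overline x,t)$; the identification of the weight series with $\Rp(w^{(\overline x,t)})$ needs only doubling and re-indexing (absorbing $\ell(Q)$ into the constant), not a further comparison of translates.
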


\begin{proof}
Let $Q_0 = \tilde{Q}$ and $Q_i = 2^iQ \times [t+2^i\ell(Q), t+2^{i+1} \ell(Q)]$ for $i \geq 1$. Let $\hat{Q}_1 = 2Q \times  [t,t+2\ell(Q)]$ and let $\hat{Q}_i = 2^iQ \times [t+2^{i-1}\ell(Q), t+2^{i+1} \ell(Q)]$ for $i \geq 1$. By construction, $Q_i,Q_{i+1} \subset \hat{Q}_{i+1}$ for all $i\in \N$ and $\hat{Q}_i \subset H_{t}$. Also, $\hat{Q}_i \cap \hat{Q}_j=\emptyset$ when $|i-j|\geq 3$.

By the $p$-Poincar\'e inequality and doubling, we have
\[
|u_{Q_{i+1}}-u_{Q_i}| \lesssim \ell(\hat{Q}_{i+1} ) \left(\avint_{\hat{Q}_{i+1} }|\nabla u|^p d\mu\right)^{\frac{1}{p}}\lesssim 2^i \ell(Q) \left(\avint_{\hat{Q}_{i+1}} |\nabla u|^p d\mu\right)^{\frac{1}{p}}.
\]

Recall that $\lim_{i \to \infty} u_{Q_i}=c$, by Lemma \ref{lem:w-average}. Thus, summing the previous estimate over $i$ and using H\"older's inequality gives

\begin{align*}
\avint_{\tilde{Q}}|u-c|d\mu \leq \avint_{\tilde{Q}}|u-u_{Q_0}|d\mu + \sum_{i=1}^\infty |u_{Q_{i+1}}-u_{Q_i}| &\lesssim \sum_{i=1}^\infty  2^i \ell(Q) \left(\avint_{\hat{Q}_i} |\nabla u|^p wdx\right)^{\frac{1}{p}} \\
&\leq \ell(Q) \left(\sum_{i=1}^\infty  2^{\frac{ip}{p-1}} w(\tilde{Q}_i)^{\frac{-1}{p-1}} \right)^{\frac{p-1}{p}}  \left(\sum_{i=1}^\infty \int_{\hat{Q}_i} |\nabla u|^p d\mu \right)^{\frac{1}{p}}.
\end{align*}

The case of $p=1$ is similar, but uses the fact that $\sup_{i\in \N}\ell(\hat{Q}_i)w(\hat{Q}_i)^{-1} \lesssim \mathcal R_1(w^{(\overline{x},t)})$.

\end{proof}

\begin{proof}[Proof of Theorem \ref{thm:cubed-average}] Suppose that every $u\in \dot W^{1,p}(\R^d,w)$ has a unique almost sure vertical limit. If $\Rp(w)=\infty$, then Lemma \ref{lem3.5} yields a function $u\in  W^{1,p}(\R^d,w)$ so that $\lim_{|x|\to\infty} u(x)=\infty$. This is a contradiction, and thus $\Rp(w)<\infty$. Suppose that $\inf_{z\in \N} w(Q \times [z,z+1])=0.$ Then there exists an increasing sequence $(n_i)_{i\in\N}$ with $n_i \to\infty$ so that $\lim_{i\to\infty} w(Q\times [n_i,n_i+1])=0$.  Let $Q_i = Q\times [n_i,n_i+1]$. Then,  Example \ref{example:rough} applied to the cubes $Q_i,$ gives a function $u\in W^{1,p}(\R^d,w)$ so that $\lim_{t\to\infty} u(\overline{x},t)$ does not exist for any $\overline{x}\in Q$. Consequently, we must have $\inf_{z\in \N} w(Q \times [z,z+1])>0$.
\end{proof}

\begin{proof}[Proof of Theorem \ref{lisatty}] 

We begin by verifying that $(2)$ implies \eqref{eq:lower-mass-bound}.  
Suppose that \eqref{eq:lower-mass-bound} does not hold. Then, there is a sequence $Q_i\to\infty$ so that $\lim_{i\to\infty} w(Q_i) = 0$ but $\ell(Q_i)=1$.  
By passing to a subsequence, we may assume that $Q_i$ are pairwise disjoint and that $\sum_{i\in \N}w^{1/(2p)}(Q_i)<\infty.$ Let $u=\sum_{i\in \N} \frac{1}{w(Q_i)^{1/(2p)}}\psi_i$, where $\psi_i$ is a $2$-Lipschitz function with $\psi_i|_{\frac{1}{2}Q_i}=1$ and $\psi_i|_{\R^d \setminus Q_i}=0$. 
Then $u\in W^{1,p}(\R^d,w) \subset \Wdp(\R^d,w)$. 
However, $\avint_{Q_i} u d\mu \geq \frac{1}{w(Q_i)^{1/(2p)}}\to \infty$ as $i\to\infty$, which contradicts $(2)$.

Next, we assume that \eqref{eq:lower-mass-bound} holds. Let $\hat Q$ be a cube of edge length $R\ge 1.$ By considering the maximal collection of pairwise disjoint subcubes of edge length one of $\hat Q,$ we
conclude from \eqref{eq:lower-mass-bound} that $w(\hat Q)\gtrsim R^d.$
It especially follows that $w(\{x\in \R^d:\  2^i\le |x|<2^{i+1}\})\gtrsim 2^{id}$ and hence that $\Rp(w)<\infty$ since $p<d.$ 

Fix an  $R>0$ and a  cube $Q$  
so that $\infty>\ell(Q)\ge \delta>0$ and  $Q\cap B(0,R) = \emptyset$ for some constant $\delta$ independent of $Q$. Let $Q_0 = Q$. Form a sequence of cubes $Q_n$ recursively by defining $Q_{n+1}$ to be the cube centred at a corner $v_n$ of $Q_n$ which is furthest away of the origin, and with twice the edge length. This gives a sequence $Q_n$ with $\ell(Q_n) = 2^n \ell(Q)$ and so that $Q_n \cap B(0,R)= \emptyset$, $Q_0=Q$ and  $Q_k \subset Q_l$ for $k<l$.  
Let  $u \in \Wdp(\R^d,w)$. 

Let $\tilde{Q}_n \subset Q_n$ be the orthant of $Q_n$ whose center is furthest away of the origin. Denote the center by $x_{\tilde{Q}_n}$. 
We have $\ell(\tilde{Q}_n)=\ell(Q_n)/2$ and there exists a constant $C=C(R,\ell(Q))$ so that Lemma \ref{lem:w-average} is satisfied, that is $\sqrt{d}/2\ell(\tilde{Q}_{n})\leq |x_{\tilde{Q}_{n}}|\leq C \ell(\tilde{Q}_{n})$ for all integers $n \geq 0$.

Notice  that $\mathcal R_p(w)<\infty$ by Remark \ref{remark1.14}.
Then, by Lemma \ref{lem:w-average} we get $\lim_{n\to\infty} u_{\tilde{Q}_{n}}=c$, where $c$ is the almost sure radial limit.

Then, we have from the $p$-Poincar\'e inequality that

\[
|u_{Q_n}-u_{\tilde{Q}_{n}}|
\lesssim  \frac{2^n\ell(Q)}{w(Q_{n+1})^{\frac{1}{p}}} \left( \int_{\mathbb R^d\setminus B(0,R)} |\nabla u|^p(x) w(x)dx \right)^{1/p}
\]
and
\[
|u_{Q_n}-u_{Q_{n+1}}|
\lesssim  \frac{2^n\ell(Q)}{w(Q_{n+1})^{\frac{1}{p}}} \left( \int_{\mathbb R^d\setminus B(0,R)} |\nabla u|^p(x) w(x)dx \right)^{1/p}.
\]

We have {$w(Q_{n+1}) \gtrsim (2^n \ell(Q))^d$}.  By recalling that $p<d$, we get from the first bound that $\lim_{n\to \infty}|u_{Q_n}-u_{\tilde{Q}_n}|=0$. Therefore $\lim_{n\to\infty} u_{Q_n}=c$.

Next, summing the second bound over $n$  we get
\[
\sum_{n\in\mathbb N} |u_{Q_n}-u_{Q_{n+1}}| \lesssim _\delta \left( \int_{\mathbb R^d\setminus B(0,R)} |\nabla u|^p(x) w(x)dx \right)^{1/p}<\infty.\]
 
Thus, by a telescoping sum, we get

$$|u_{Q}-c|\leq \sum_{n\in\mathbb N} |u_{Q_n}-u_{Q_{n+1}}| \lesssim_\delta \left( \int_{\mathbb R^d\setminus B(0,R)} |\nabla u|^p(x) w(x)dx \right)^{1/p}.$$

Now, if $R \to \infty$, the right-hand side converges to zero. Thus, if $Q_i \to \infty$, for every $R>0$, we can find a $N$ so that $Q_i \cap B(0,R) = \emptyset$ for all $i \geq N$. This gives $\lim_{R\to\infty} u_{Q}=c.$ The proof is complete.

\end{proof}

\subsection{Pointwise limits}

First, we need an auxiliary result. This is a stronger form of the necessary condition in Theorem \ref{thm:cubed-average}.

\begin{lemma}\label{Syl} If $w\in \A_p(\mathbb R^d)$ and $\sup_{t>0}\Rp(w_t)=\infty$, then there is a function $u\in \Wdp(\R^d,w)$ so that for no $\overline x\in B(\bar 0,1)\subset \R^{d-1}$ does the limit $\lim_{t\to\infty} u(\overline x,t)$ exist.
\end{lemma}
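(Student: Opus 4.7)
The plan is to build $u$ as a weighted sum of Lipschitz bumps $c_n\psi_n$ centred along the positive vertical axis at a sequence $t_n\to\infty$. If $\mathcal R_p(w)=\infty$ I would just invoke Lemma~\ref{lem3.5} to produce $u\in\dot W^{1,p}(\R^d,w)$ with $\lim_{|x|\to\infty}u(x)=\infty$; since then $\lim_{s\to\infty}u(\overline x,s)=\infty$ for every $\overline x\in\R^{d-1}$, no finite vertical limit exists. So assume $\mathcal R_p(w)<\infty$. Remark~\ref{rmk:Rpcomparison} gives $\mathcal R_p(w_t)\le C_{|t|}\mathcal R_p(w)<\infty$ uniformly for $t$ in any bounded set, so the hypothesis $\sup_{t>0}\mathcal R_p(w_t)=\infty$ forces $\sup_{t>T}\mathcal R_p(w_t)=\infty$ for every $T>0$; in particular one extracts $t_n\to\infty$ with $\mathcal R_p(w_{t_n})\to\infty$. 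Moreover, the finiteness of $\mathcal R_p(w)$ means that its tail $\sum_{i\ge I}(2^i)^{p/(p-1)}w(A_i)^{1/(1-p)}$ tends to $0$ as $I\to\infty$, and doubling identifies the tail of $\mathcal R_p(w_t)$ beyond the index $\log_2 t+10$ with the tail of $\mathcal R_p(w)$ up to bounded multiplicative constants independent of $t$. The truncated quantity $\mathcal R_p^{(M)}(w_t):=\sum_{i:\,2^i\le M}(2^i)^{p/(p-1)}w_t(A_i)^{1/(1-p)}$ therefore also satisfies $\sup_{t>T}\mathcal R_p^{(t/2^{10})}(w_t)=\infty$, which lets me inductively pick $t_n\to\infty$ with $t_{n+1}\ge 2t_n$ and $\mathcal R_p^{(t_n/2^{10})}(w_{t_n})\ge 4^n$; the smallest index $b_n$ at which this truncated partial sum reaches $4^n$ then automatically satisfies $2^{b_n+1}\le t_n/2^9$.

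For each $n$ the bump $\psi_n$ is obtained by reproducing the construction of Lemma~\ref{lem3.5} restricted to the single annulus block $[1,b_n]$ relative to the weight $w_{t_n}$: this yields a Lipschitz $h_n$ equal to $0$ on $B(0,2)$, at least $1$ off $B(0,2^{b_n+1})$, and with $\|\nabla h_n\|_{L^p(w_{t_n})}^p\lesssim 4^{-n(p-1)}$ (the $p=1$ case is handled analogously using the single-annulus version in the proof of Lemma~\ref{lem3.5}). After truncating $h_n$ at height $1$ and translating back to the $w$-frame so that the bump is centred at $(\overline 0,t_n)$, I set $\psi_n:=1-\min\{1,h_n\}$; this is a Lipschitz bump on $\R^d$ supported in $B((\overline 0,t_n),2^{b_n+1})$, identically $1$ on $B((\overline 0,t_n),2)\supset B(\overline 0,1)\times\{t_n\}$, and with $\|\nabla\psi_n\|_{L^p(w)}^p\lesssim 4^{-n(p-1)}$. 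The conditions $2^{b_n+1}\le t_n/2^9$ and $t_{n+1}\ge 2t_n$ ensure that the supports $\{B((\overline 0,t_n),2^{b_n+1})\}_n$ are pairwise disjoint and march off to $+\infty$.

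Finally, setting $c_n:=2^{n(p-1)/p}$ and $u:=\sum_n c_n\psi_n$, Minkowski's inequality gives $\|\nabla u\|_{L^p(w)}\le\sum_n c_n\|\nabla\psi_n\|_{L^p(w)}\lesssim\sum_n 2^{-n(p-1)/p}<\infty$, so $u\in\dot W^{1,p}(\R^d,w)$; the disjointness of supports makes $u$ locally bounded and hence in $L^p_{\loc}(w)$. For every $\overline x\in B(\overline 0,1)$ the point $(\overline x,t_n)$ lies where $\psi_n\equiv 1$ and all other $\psi_m$ vanish, so $u(\overline x,t_n)=c_n\to\infty$, which precludes any finite vertical limit. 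The main technical obstacle will be the truncation step producing $b_n$ small enough relative to $t_n$: it rests on identifying the far tail of $\mathcal R_p(w_t)$ with that of $\mathcal R_p(w)$, a reduction that uses the $\mathcal A_p$ doubling in \eqref{mitanalaraja} essentially; once this is in hand, the recursive choice of the $t_n$ and the disjointness of supports follow easily.
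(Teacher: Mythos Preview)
Your proposal is correct and shares the paper's core mechanism: use doubling to show that the ``far'' part of $\mathcal R_p(w_t)$ (indices with $2^i$ large compared to $t$) is controlled by the finite $\mathcal R_p(w)$, so that the divergence of $\mathcal R_p(w_{t_n})$ must come from annuli well inside $B((\overline 0,t_n),t_n)$; then feed those nearby annuli into the Lemma~\ref{lem3.5} construction to produce cheap Lipschitz transitions localised in disjoint balls along the vertical axis.

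The paper's assembly is different from yours. Rather than summing independent bumps $c_n\psi_n$, the paper keeps only odd-indexed annular packets, sets $g=\sum_k g_{2k+1}$, and defines a single global $u(x)=\inf_\gamma\int_\gamma g\,ds$ over curves from a fixed base point $O$. Any curve reaching $B((\overline 0,t_{2k+1}),1)$ must cross the $(2k+1)$st annular shell, forcing $u=1$ there, while $B((\overline 0,t_{2k}),1)$ can be reached by a curve dodging all odd shells, so $u=0$ there; hence $u$ oscillates between $0$ and $1$. Your approach is arguably more direct---each bump is built and estimated separately---while the paper's yields a bounded function and a uniform oscillation without needing the amplitudes $c_n$. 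The tail-comparison step also differs slightly: the paper compares $w_{t_k}$ to $w_{t_{k-1}}$ at scales $2^i\gtrsim L_k=t_k-t_{k-1}$ and passes to a subsequence with $\mathcal R_p(w_{t_k})\ge 2C^{1/(1-p)}\mathcal R_p(w_{t_{k-1}})$, whereas you compare $w_t$ directly to $w$ at $t=0$.

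Two minor points. First, your truncation leaves a gap: you cut at $2^i\le t/2^{10}$ but only identify the tail for $i>\log_2 t+10$, so the intermediate band $\log_2 t-10<i\le \log_2 t+10$ needs a word (doubling again gives $w_t(A_i)\approx w(A_i)$ there, so these $O(1)$ terms are harmless). Second, for $p=1$ your formula gives $c_n=2^{n(p-1)/p}=1$, so $u(\overline x,t_n)\equiv 1$ does not tend to infinity; the conclusion still holds because $u\equiv 0$ between the disjoint supports, but you should say so (or simply take $c_n=2^n$ for all $p$, which the estimate $\|\nabla\psi_n\|_{L^p(w)}^p\lesssim 4^{-n(p-1)}$, respectively $\lesssim 4^{-n}$ when $p=1$, still absorbs).
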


\begin{proof}
If $\Rp(w_t)=\infty$ for some $t>0$, then the claim follows from Lemma \ref{lem3.5}. Thus, assume that $\Rp(w_t)<\infty$ for each $t>0$. 

Let $A_i(t):=B(O(t),2^{i+1})\setminus B(O(t),2^i)$ be the translated annulus $A_i=B(0,2^{i+1})\setminus B(0,2^i)$ for the center $O(t)=(\overline{0},t)$. Then, $w_t(A_i)=w(A_i(t))$. We have 
\[
\sup_{t>0}\Rp(w_t)=\sup_{t>0}\sum_{i\in\mathbb N}(2^{i})^{\frac{p}{p-1}}w^{\frac{1}{1-p}}(A_i(t)) \text{\rm \ \ if $p>1$, and \ \ }\sup_{t>0}\mathcal R_1(w_t)=\sup_{t>0}\sup_{i\in \N} \left(2^{i}w^{-1}(A_i(t))\right).
\]
Since $\mathcal R_{p}(w_t)<\infty$ for each $t$ and $\sup_{t>0}\mathcal R_p(w_t)=\infty$, there is a sequence $\{t_k\}_{k\in\mathbb N}$ with $t_{k+1}>t_k+1$ such that $\mathcal R_p(w_{t_k})>2^k$. Let $L_k\defeq t_k-t_{k-1}$.  By Theorem \ref{doubling-1811}, $w$ is doubling, and we can show that there exists a constant $C>0$ so that $w(A_i(t_{k})) \geq C w(A_i(t_{k-1}))$ when $2^i\geq L_k/4$. By passing to a subsequence, we can assume that $\mathcal R_p(w_{t_{k}}) \geq 2C^{\frac{1}{1-p}} \mathcal \mathcal R_p(w_{t_{k-1}})$ when $p>1$, or $\mathcal R_1(w_{t_{k}}) \geq 2C^{-1} \mathcal \mathcal R_1(w_{t_{k-1}})$ for $p=1$, for all integers $k\geq 2$. Then,
\[\sum_{i\in \N, 2^i \geq L_k/4} (2^i)^{\frac{p}{p-1}}w^{\frac{1}{1-p}}(A_i(t_k)) \leq C^{\frac{1}{1-p}} \mathcal R_p(w_{t_{k-1}}) \leq \frac{1}{2}\mathcal R_p(w_{t_{k}}) \text{\rm \ \ if $p>1$,}\]
and
\[\sup_{i\in \N, 2^i \geq L_k/4} 2^{i}w^{-1}(A_i(t)) \leq C^{-1} \mathcal R_1(w_{t_{k-1}}) \leq \frac{1}{2}\mathcal R_1(w_{t_{k}}) \text{\rm \ \ if $p=1$.}\]
Therefore,
\begin{align*}
&\sum_{i\in\mathbb N, 2^i<L_k/4}(2^i)^{\frac{p}{p-1}}w^{\frac{1}{1-p}}(A_i(t_k))\geq\frac{1}{2} \mathcal R_p(w_{t_k})>2^{k-1} \text{\rm \ \ if $p>1$,} \\
&\sup_{i\in\mathbb N, 2^i<L_k/4} 2^iw^{-1}(A_i(t_k))\geq\frac{1}{2} \mathcal R_1(w_{t_k})>2^{k-1} \text{\rm \ \  if $p=1$.}
\end{align*}
For $p>1$, define annuli $A_k$ by $A_k\defeq \bigcup_{i\in\mathbb N, 2^i<L_k/4}A_i(t_k)$. If $p=1$, let $i_k$ be such that $2^{i_k}<L_k$ and $2^{i_k}w^{-1}(A_{i_k}(t_k))>2^{k-1}$. In this case, set $A_k:=A_{i_k}(t_k)$. Next, we proceed in a similar way as in the
proof of Lemma \ref{lem3.5} and set

	\begin{equation}\notag
	\label{gkdef}  g_k(x)= \sum_{i\in \N, 2^i<L_k/4}\frac{(2^i)^{\frac{1}{p-1}}w^{\frac{1}{1-p}}(A_{i}(t_k))}{\sum_{i\in \N, 2^i<L_k/4} (2^i)^{\frac{p}{p-1}}w^{\frac{1}{1-p}}(A_{i}) } \chi_{A_{i}(t_k)}(x) \text{\rm \  if $p>1$, and\ }
	g_k(x)=2^{-i_k}\chi_{A_k}(x) \text{\rm \ when  $p=1$.}
	\end{equation}

	By passing to a subsequence, we can ensure that $(t_{k}-t_{k-1})/2>t_{k-1}+2L_{k-1}$ for all $k\in \N$, which guarantees that the sets $A_k$ are disjoint, and that the point $O=(\overline{0},t_1-4L_1)$ is not contained in any of the balls $B(t_k,2L_k)$, for $k\in \N$.  Define $g = \sum_{k=2}^\infty g_{2k+1}$. Then, $\int_{\mathbb R^d} g^p d\mu \leq \sum_{k=2}^\infty\int_{\mathbb R^d} g_{2k+1}^p d\mu< \infty$, since the supports are disjoint.  Define $u(x) = \inf_{\gamma} \int_{\gamma}gds$
	 where $\gamma$ is any rectifiable curve connecting $O$ to $x$. As in the proof of Lemma \ref{lem3.5}, we have $|\nabla u_k(x)|\leq g(x)$ for $\mu$-a.e. $x\in \R^d$. Since $u_k\leq 1$, we have $u_k \in \dot W^{1,p}(\R^d,w)$.
	
	Next, let $O_k=(\overline{0},t_k)$. Any curve which connects $B(O_k,1)$ to $O$ must pass through the annulus $A_k$. On the other hand, if $k$ is odd, there is a rectifiable curve connecting $B(O_k,1)$ to $O$ which does not pass through any $A_l$ for $l$ even. Thus 
\[
\begin{cases}
	u=1  \text{\rm \ \ on\ \ }B(O_{2k+1},1),\\
	u=0  \text{\rm \ \ on\ \ }B(O_{2k},1).
\end{cases}
\]
Then $\lim_{t\to\infty}u(\bar x,t)$ does not exist when $\bar x\in B(\bar 0,1)$ and $u\in \dot W^{1,p}(\mathbb R^d,w)$.
\end{proof}

\begin{proof}[Proof of Theorem \ref{thm:vertical-lowerbound}]

First, we prove that $(1)\Longrightarrow (2)$. Let $Q\subset\mathbb R^{d-1}$ be a cube of unit size. 
It suffices to show that
\begin{equation}\label{eq:5.2-1109}
\lim_{t\to\infty, (\bar x,t)\in Q\times [n,n+1]}|u(\bar x,t)-u_{Q\times[n,n+1]}|=0 \text{\rm \ \ for $\mathcal H^{d-1}$-a.e $\bar x\in Q$. }
\end{equation}
 Let $E_n=\{ (\bar x,t)\in Q\times[n,n+1]: |u(\bar x,t)- u_{Q\times[n,n+1]}|>a_n \}$ where $\{a_n\}_{n\in\mathbb N}$ satisfies 
 \[
 a_n>0, \lim_{n\to\infty,n\in\mathbb N}a_n=0, \sum_{n\in\mathbb N}\frac{1}{a_n^p}\int_{Q\times[n,n+1]}|\nabla u|^pd\mu<\infty.
 \]
 Here the sequence $\{a_n\}_{n\in\mathbb N}$ with above properties exists see for instance \cite[Lemma 3.3]{KN21}.
 For every $x\in E_n$ which is a Lebesgue point (with respect to $\mu$) of $u$, we have that $a_n\lesssim  \mathcal M^{1/p}_{p-\alpha,\text{diam} (E_n)}|\nabla u|^p(x)$ for any $0<\alpha<p<{qd}$ by the $p$-Poincar\'e inequality. Let $LE_n$ be a set of all Lebesgue points (with respect to $\mu$) in $E_n$. Notice that $\inf_{n\in\mathbb N}w(Q\times[n,n+1])>0$. By Theorem  \ref{thm2.8} applied to the zero extension of $|\nabla u|$ to $Q\times[n,n+1]$, we obtain that 
\[
\mathcal H^{{qd}-p+\alpha}_\infty(LE_n)\leq \mathcal H^{{qd}-p+\alpha}_\infty(\{x\in Q\times[n,n+1]: \mathcal M^{1/p}_{p-\alpha,\text{diam}(E_n)}u(x)\gtrsim a_n \})\lesssim \frac{1}{a_n^p}\int_{Q\times[n,n+1]}|\nabla u|^pd\mu.
\] Let $A^*$ is the projection of $A\subset\mathbb R^d$ into $\mathbb R^{d-1}$. Hence $\mathcal H^{{qd}-p+\alpha}_\infty(LE)=0$ where $LE=\bigcap_{m\in \mathbb N} \bigcup_{n\geq m} (LE_n)^*$ and so $\mathcal H^{{qd}-p+\alpha}(LE)=0$. Notice that there is $0<\alpha<p$ such that ${qd}-p+\alpha\leq d-1$ because  ${qd}-(d-1)<p$. It follows that $\mathcal H^{d-1}(LE)=0$. 

By \cite[Theorem 4.4]{HKi98}, we have $\mathcal H^{qd-p+\alpha}({\rm NL}_\mu(u))=0$, where ${\rm NL}_\mu(u)$ is the set of non-Lebesgue points of $u$ with respect to the weighted measure $\mu$. Since ${qd}-(d-1)<p$, $\mathcal H^{d-1}(\bigcup_{n\in \mathbb N}E_n\setminus LE_n)=0$. Therefore, $\mathcal H^{d-1}(\bigcap_{m\in\mathbb N}\bigcup_{n\geq |m|}E_n^*)=0$, and hence  \eqref{eq:5.2-1109} follows.

Next, we show that $(2) \Longrightarrow (1)$. Assume that every $u\in \dot W^{1,p}(\R^d,w)$ has a unique almost sure vertical limit. Then, by Lemma \ref{Syl}, we obtain $\sup_{t>0} \Rp(w_t)<\infty$. Fix a cube $Q \subset \R^{d-1}$ centered at $\overline{x}$ and consider $Q(t)=Q\times [t+\ell(Q)]$. Then, by Lemma \ref{lem:av-est}, we  have
\[
\avint_{Q(t)} |u-c| d\mu \lesssim_{\ell(Q)} \ell(Q)\left( \int_{H_{t}} |\nabla u|^p d\mu \right)^{\frac{1}{p}} (\Rp(w^{(\overline{x},t)})^{\frac{p-1}{p}}
\]
if $p>1$ and the same bounds holds with $\mathcal R_1(w)$ replacing $(\Rp(w^{(\overline{x},t)})^{\frac{p-1}{p}}$ if $p=1$,
where $c$ is the almost sure radial limit.
We have $w^{(\overline{x},t)}=w_t^{(\overline{}{x},0)}$. Thus Remark \ref{rmk:Rpcomparison} together with $\sup_{t>0} \Rp(w_t)<\infty$ gives $\sup_{t>0} \Rp(w^{(\overline{x},t)})<\infty$. Thus, sending $t\to\infty$, we get
$\lim_{t\to\infty} u_{Q(t)}=c,$ as claimed.
\end{proof}

\begin{proof}[Proof of Theorem \ref{thm:pekka-mizuta}]

Consider an arbitrary point $a\in \R^d$. By $\inf_{\ell(Q)=1} w(Q)>0$, $p<d$ and the argument in the second paragraph of the proof of Theorem \ref{lisatty}, we  obtain $\Rp(w^{a})<C<\infty$, with the bound independent of $a$.
 
 Let $\tilde Q\subset\mathbb R^{d-1}$ be a cube.
 Let $Q(T) = \tilde{Q} \times [T,T+\ell(\tilde{Q})]$ where $T>0$. 
 It then follows from Lemma \ref{lem:av-est}, that $\lim_{T \to \infty} u_{Q(T)}=c$, where $c$ is the almost sure radial limit. 
It suffices to show that $\lim_{t\to\infty}|u(\overline x,t)-Q(n)|=0$ for $\mathcal H^{d-1}$-a.e $\overline x\in Q$ where $(\overline x,t)\in Q(n)$, $n\in\mathbb N$.
 
 Let $t = \frac{1}{q-1}$. We have by the definition of $\A_q(\R^d)$ that there is a constant $L$ so that
 
 \[ \avint_{Q(T)}wdx \left( \avint_{Q(T)}w^{-t}dx\right)^{1/t} \leq L.
 \]
 Since $\inf_{\ell(Q)=1} w(Q)>0$, and since $w$ is a doubling weight by Theorem \ref{thm:muckenhouppi}, we get  that $\inf_{T\in \R} w(Q(T))< \infty$. Thus,  by combining the previous two claims, there is a constant $M$ so that for all $T>0$ we have
 \[
 \int_{Q(T)}w^{-t}dx \leq M.
 \]

 Since $x\to \frac{x}{x+1}$ is increasing we have from $t<\frac{d}{p-1}$ that $\frac{t}{1+t}p > \frac{d}{d-1+p}p$. Thus, we can choose  $p'<p$ and $\epsilon>0$ so that  
 $\frac{dp}{d-1+p} <1+\epsilon < p'<\frac{t}{1+t}p$. A direct calculation shows {$s\defeq \frac{p'}{p}\frac{d-1}{d-1-\epsilon}> \frac{p'}{p}\frac{d-1}{d-\frac{dp}{d-1+p}}=p'\frac{d-1+p}{dp}>1$.} 
 
Notice that  $p'>1+\epsilon$ and $p>p'$. Let {$\tau = t(p-p')/p'> 1$} and let $\tau^*$ be the H\"older conjugate of $\tau$, $1/\tau+1/\tau^*=1$. Using H\"older's inequality together with $s\geq 1$ we get

\begin{align*}
\sum_{n=1}^\infty \left(\int_{Q(n\ell(\tilde{Q}))} |\nabla u|^{p'} dx\right)^{\frac{d-1}{d-1-\epsilon}} &\leq \sum_{n=1}^\infty \left(\int_{Q(n\ell(\tilde{Q}))} |\nabla u|^p w dx\right)^{s}\left(\int_{Q(n\ell(\tilde{Q}))} w^{-\frac{p'}{p-p'}} dx\right)^{\frac{(p-p')(d-1)}{p(d-1-\epsilon)}}  \\
 &\leq M^\frac{(p-p')(d-1)}{\tau p(d-1-\epsilon)} |Q(n)|^{\frac{(p-p')(d-1)}{\tau^* p(d-1-\epsilon)}} \left(\sum_{n=1}^\infty \int_{Q(n\ell(\tilde{Q}))} |\nabla u|^p w dx \right)^{s} \\
  &\lesssim \left( \int_{\tilde{Q}\times[1,\infty)} |\nabla u|^p w dx \right)^{s}<\infty.
\end{align*}

We define the sets 
$F_{n,\delta}=\{(\overline x,t) \in Q(n) : |u(\overline x,t)-u_{Q(n)}|\geq \delta\}$.  
From the definition of Hausdorff content, we have the elementary bound $\cH^{d-1}_\infty(F_{n,\delta/2})\leq  \cH^{d-1+\epsilon}_\infty(F_{n,\delta/2})^{\frac{d-1}{d-1+\epsilon}}$. This, combined with Lemma \ref{lem:content-average} yields 
\begin{align*}
\cH^{d-1}_\infty(F_{n,\delta/2})&\leq  \cH^{d-1+\epsilon}_\infty(F_{n,\delta/2})^{\frac{d-1}{d-1+\epsilon}}  \\
&\lesssim_{\ell(Q)} \left( \int_{Q(n)} |\nabla u|^{p'} dx \right)^{\frac{d-1}{d-1+\epsilon}}.
\end{align*}

Let $F_{n,\delta}^*$ be the projection of  $F_{n,\delta}$ into $\mathbb R^{d-1}$. Then
\[
\cH^{d-1}_\infty(\bigcup_{n=M}^\infty F_{n,\delta}^*) \leq \sum_{n=M}^\infty \cH^{d-1}_\infty(F_{n,\delta/2}) \lesssim  \sum_{n=M}^\infty \left(\int_{Q(n\ell(\tilde{Q}))} |\nabla u|^{p'} dx\right)^{\frac{d-1}{d-1-\epsilon}} <\infty.
\]

Thus,
$\cH^{n-1}_\infty(\bigcap_{M=N}^\infty \bigcup_{n=M}^\infty F_{n,\delta}^*) = 0.$

\end{proof}

\subsection{Product weights} 

In the final part of the paper, we discuss the radial and product weight settings, where we can give necessary and sufficient conditions.

Let $1<p<d$.
For the following proof, we recall that for $v\in L^p(\mathbb R^d)$ the function \[\textbf M v(x) = \sup_{r>0} \avint_{B(x,r)}|v(x)|dx\] is the Hardy-Littlewood maximal function of $v$ at $x\in\mathbb R^d$. When $w\in \A_p(\mathbb R^d)$, we have that $\textbf M:L^p(\mathbb R^d,w)\to L^p(\mathbb R^d,w)$ is bounded, \cite[Chapter V]{Stein}. Further, we need a pointwise version of the Poincar\'e inequality: There exists a constant $C$ so that {for almost all} $x,y\in \R^d$ and for all $u \in W^{1,1}_{\loc}(\R^d)$ we have

\begin{equation}\label{PI-pp}
|u(x)-u(y)| \leq Cd(x,y)(\textbf M |\nabla u|(x) +  \textbf M|\nabla u|(y)).
\end{equation}

\begin{proof}[Proof of Theorem \ref{thm:pekka-mizuta-2nd}] First, we assume that $\sup_{r>0}\Rp(w_r)<\infty$, where $w_r(\overline x,t)=w(\overline x,t-r)$, and establish the existence of vertical limits. From Lemma \ref{lem:av-est}, since $\sup_{r>0}\Rp(w_r)<\infty$, we have that the unique almost sure radial limit $c$ exists, and for any $Q \subset \R^d$ and any $t\to\infty$ we have $\lim_{t\to\infty}\avint_{Q \times [t,t+\ell(Q)]} |u-c| d\mu = 0$.

 Fix $\epsilon>0$ and a cube $Q\subset \R^{d-1}$. Since $w\in \A_p(\R^d)$, $\textbf M |\nabla u|\in L^p(\mathbb R^d,w).$ 
 Clearly, for a.e. $\overline{x} \in Q$ we have $w_1(\overline{x})<\infty$. Thus, for almost every $\overline{x}\in Q$ there exists a  positive constant $T_{\overline{x}}\in(0,\infty)$ so that

\begin{equation}\label{eq:jointbound}\int_{t>T_{\overline{x}}} |\nabla u|^p + \left(\textbf M|\nabla u|(\overline{x},t)\right)^p w dt <\epsilon.
\end{equation}
Notice that $\textbf M |\nabla u| \geq |\nabla u|$ almost everywhere.  Since $w_2 \in \A_p(\R)$ we get using H\"older's inequality, for almost every $\overline{x}\in Q$ and each $t>T_{\overline{x}}$, that
\begin{equation}\label{eq:intbound}
\int_t^{t+1} |\nabla u|(\overline{x},s) ds \leq \int_t^{t+1} \textbf M |\nabla u|(\overline{x},s) ds\leq \frac{1}{w_1(\overline{x})w_2([t,t+1])}\epsilon^{\frac{1}{p}}.
\end{equation}

Let $Q_n = Q \times [n,n+1]$ for $n\in \N$. By the first paragraph, we have $\lim_{n\to\infty} \avint_{Q_n} |u-c|d\mu = 0$. By $\sup_{r>0} \Rp(w_r)<\infty$, and the definition of $\Rp(w)$, we have that $\inf_{r>0} w(B((\overline{0},r),2)) \gtrsim 1$. We also get $\inf_{n\in}w(Q_n)=w_1(Q)\inf_{n\in\N}w_2([n,n+1])>0$. Thus, $\inf_{n\in\N}w_2([n,n+1])>0$.

Thus, by doubling from Theorem \ref{thm:muckenhouppi}, we have that $\inf_{n\in\N} w(Q_n) >0$. Also:
\[
w(Q_n \cap \{|u-c|\geq \epsilon\}) \leq \frac{\avint_{Q_n} |u-c|d\mu }{\epsilon}w(Q_n).
\]

Now, for any $\delta \in (0,1/2)$, there is an $N_{\delta, Q}$ so that if $n\geq N_{\delta,Q}$ we have $w(Q_n \cap \{|u-c|\geq \epsilon\})\leq \delta w(Q_n)<w(Q_n)$. Thus, $w(Q_n \cap \{|u-c|< \epsilon\})\geq (1-\delta)w(Q_n) \geq \frac{1}{2}w(Q_n)$. Since $\inf_{n\in\N} w(Q_n) >0$, and since $Q_n$ have disjoint interiors, we get $\lim_{n\to\infty} \avint_{Q_n} \textbf M |\nabla u|^p d\mu = 0$. In particular, there is an index $N$ so that for $n\geq N$, we have $\avint_{Q_n} \textbf M |\nabla u|^p d\mu \leq \epsilon$. Thus, by the Markov inequality, for $n\geq \max(N,N_{\delta,Q})$, there must exist a point $y_n\in Q_n \cap \{|u-c|< \epsilon\}$ so that 
\begin{equation}\label{eq:ynbound}
\mathbf M |\nabla u|(y_n) \leq 2\epsilon^{1/p} \text{ and } |u(y_n)-c|\leq \epsilon.
\end{equation}

By equation \eqref{eq:intbound} for almost every $\overline{x}$, if $n\geq \max\{T_{\overline{x}}\}$, then there is a value $t_{n,\overline{x}}\in [n,n+1]$ with 

\begin{equation}\label{eq:tnbound}
\mathbf M |\nabla u|(\overline{x},t_{n,\overline{x}}) \leq \frac{\epsilon^{\frac{1}{p}} }{w_1(\overline{x})w_2([n,n+1])}.
\end{equation}

Consider such a $\overline{x}$ and let $t> \max\{T_{\overline{x}},N_{\delta,Q}, N\}$. Choose $n \geq \max\{N,N_{\delta,Q}\}$ so that $n \leq t <n+1$. Combining the bounds \eqref{eq:ynbound}, \eqref{eq:tnbound} and \eqref{PI-pp}, we have

\begin{equation}\label{eq:yntnbound}
|u(y_n)-u(\overline{x},t_{n,\overline{x}})|\lesssim \diam(Q_n) (\mathbf M|\nabla u|(y_n) + \mathbf M|\nabla u|(\overline{x},t_{n,\overline{x}})) \lesssim (\diam(Q)+1 ~ ) (\epsilon^{\frac{1}{p}} / ( w_1(\overline{x})w_2(Q)) + 2\epsilon)).
\end{equation}

By Lemma \ref{lem:acclines} the function $t\mapsto u(\overline{x}, t)$ is absolutely continuous for almost every $\overline{x}$ and 

\begin{equation}\label{eq:tbound}
|u(\overline{x},t_{n,\overline{x}})-u(\overline{x},t)|\leq \int_n^{n+1} |\nabla u|(\overline{x},s) ds \leq  \frac{1}{w_1(\overline{x})w_2([n,n+1])}\epsilon^{\frac{1}{p}}.
\end{equation}

By combining estimates \eqref{eq:ynbound}, \eqref{eq:tbound} and \eqref{eq:yntnbound} with the triangle inequality, we obtain
\[
|u(\overline{x},t)-c| \lesssim \frac{\diam(Q) + 2}{w_1(\overline{x})w_2([n,n+1])}\epsilon^{\frac{1}{p}} + (\diam(Q) + 2 ~ )\epsilon.
\]

Since $\inf_{n\in\N}w_2([n,n+1])>0$ and since $\epsilon>0$ was arbitrary the existence of vertical limits almost everywhere follows.

Finally, the proof for the converse direction follows from Lemma \ref{Syl}. By this result, if $\sup_{r>0}\Rp(w_r)=\infty$, there exists a $u \in \Wdp(\R^d,w)$ which does not have any vertical limits in a set of positive measure.

\end{proof}

\subsection{Radial weights}

\begin{proof}[Proof of Theorem \ref{thm:radial-weights}]

First, the implication $(2) \Longrightarrow (1)$ is shown by the following argument which uses contrapositive and  Example \ref{example:rough}. Indeed, if $\inf_{r>0} \int_r^{r+1} v(s) ds = 0$, then we can find a sequence of $r_i$, so that $\lim_{j\to\infty} \int_{r_j}^{r_j+1} v(s) ds = 0$ and  $r_j \to \infty$. Consider the cube $Q=Q(0,1)\subset \R^{d-1}$. Then, by doubling, we can show that $\lim_{j\to \infty} w(Q\times [r_j,r_j+1])=0$. Now, example \ref{example:rough} furnishes a function $u$ without vertical limits for any $\overline{x}\in Q$.

Next, we turn to establish $(1) \Longrightarrow (2)$. This proof is nearly the same as that of Theorem \ref{thm:pekka-mizuta-2nd}. Similarly to that argument, fix an $\epsilon>0$ and a cube $Q \subset \R^{d-1}$. We indicate the few differences from this proof.

First, the assumption $\inf_{r>0} \int_r^{r+1} v(s) ds > 0$ implies $\inf_{\ell(\mathbf{Q})=1}w(\mathbf{Q})>1$ and so $\sup_{t>0}\Rp(w_t)<\infty$.  With this addition, the first paragraph of the proof in Theorem \ref{thm:pekka-mizuta-2nd} applies, and  $\lim_{t\to\infty} \avint_{Q \times [t,t+\ell(Q)]} |u-c|d\mu = 0$.

By the boundedness of the maximal operator for Muckenhoupt weights, we have $\mathbf M|\nabla u| \in L^p(\mathbb R^d,w)$. Therefore, for almost every $\overline{x}\in Q$ there exists a $T_{\overline{x}}>0$ so that
\begin{equation}\label{eq:jointbound-radial}
\int_{t>T_{\overline{x}}} |\nabla u|^p + \left(\mathbf M|\nabla u|(\overline{x},t)\right)^p w(\overline{x},t) dt <\epsilon.
\end{equation}
This bound replaces \eqref{eq:jointbound} in the proof of Theorem \ref{thm:pekka-mizuta-2nd}. 

The function $v_2(t)=v(t^{1/d})$ satisfies $v_2\in \A_p(\R)$ by \cite{DMOSradial} and $w(\overline{x},t) = v_2(\sqrt{|\overline{x}|^2+t^2}^d).$  Let $t>\max\{T_{\overline{x}},|\overline{x}|\}$, and let $s_1=\sqrt{|\overline{x}|^2+t^2}^{d}$ and $s_2=\sqrt{|\overline{x}|^2+(t+1)^2}^{d}$. Since $v_2 \in \A_p(\R)$, there is a constant $C$ so that

\begin{equation}\label{eq:apv2}
\frac{1}{s_2-s_1}\int_{s_1}^{s_2} v_2(s) ds \left(\frac{1}{s_2-s_1}\int_{s_1}^{s_2} v_2^{\frac{1}{1-p}}(s) ds\right)^{\frac{1}{p-1}} \leq C.
\end{equation}

By a change of variables, $t\mapsto \sqrt{|\overline{x}|^2+t^2}^{d}$, we get $\int_{s_1}^{s_2}v_2(s) ds \gtrsim t^{d-1}\int_{t}^{t+1} w(\overline{x},s) ds$ and $\int_{s_1}^{s_2} v_2^{\frac{1}{1-p}}(s)ds \gtrsim  t^{d-1}\int_{t}^{t+1} w(\overline{x},s)^{\frac{1}{1-p}} ds$. We also have $s_2-s_1 \lesssim t^{d-1}$. Thus, by changing the constant $C$, we get
\begin{equation}\label{eq:apw}
\int_{t}^{t+1} w(\overline{x},s) ds \left(\int_{t}^{t+1} w(\overline{x},s)^{\frac{1}{1-p}} ds\right)^{\frac{1}{p-1}} \leq C.
\end{equation}
By another change of variables and the assumption, we get a constant $\delta=\delta({\overline{x}})>0$ so that $\int_{t}^{t+1} w(\overline{x},t) dt \geq \delta$ for all $t>1$.  Combining this with \eqref{eq:apw}  gives $\left(\int_{t}^{t+1} w(\overline{x},s)^{\frac{1}{1-p}} ds\right)^{\frac{1}{p-1}} \leq C_{\overline{x}}$ for some constant $C_{\overline{x}}$ and all $t>\max\{T_{\bar x}, |\bar x|,1\}$. This, together with H\"older's inequality and estimate \eqref{eq:jointbound-radial} yields for all $t\geq \max\{1,|\overline{x}|,T_{\overline{x}}\}$ that
\begin{equation}\label{eq:intbound-radial}
\int_t^{t+1} |\nabla u|(\overline{x},s) ds \leq \int_t^{t+1} \mathbf M |\nabla u|(\overline{x},s) ds\leq C_{\overline{x}}\epsilon^{\frac{1}{p}}.
\end{equation}

With $C_{\overline{x}}$ replacing $\frac{1}{w_1(\overline{x})w_2(Q)}$, and with the additional restriction that $t>\max\{1,|\overline{x}|\}$ the rest of the proof of Theorem \ref{thm:pekka-mizuta-2nd} applies without further changes.
\end{proof}

\bibliographystyle{alpha}
\bibliography{pmodulus}

\end{document}